\title[Tensor norms and exchangeable distributions]
{Tensor norms on ordered normed spaces,
polarization constants,  
and exchangeable distributions}
\date{6 November, 2018}
\author{Svante Janson}
\thanks{Partly supported by the Knut and Alice Wallenberg Foundation}
\address{Department of Mathematics, Uppsala University, PO Box 480,
SE-751~06 Uppsala, Sweden}
\email{svante.janson@math.uu.se}
\newcommand\urladdrx[1]{{\urladdr{\def~{{\tiny$\sim$}}#1}}}
\subjclass[2010]{46B28, 60G09} 
\numberwithin{equation}{section}
\renewcommand\le{\leqslant}
\renewcommand\ge{\geqslant}
\theoremstyle{plain}
\newtheorem{theorem}{Theorem}[section]
\newtheorem{lemma}[theorem]{Lemma}
\newtheorem{corollary}[theorem]{Corollary}
\theoremstyle{definition}
\newtheorem{example}[theorem]{Example}
\newtheorem{definition}[theorem]{Definition}
\newtheorem{definitions}[theorem]{Definitions}
\newtheorem{problem}[theorem]{Problem}
\newtheorem{remark}[theorem]{Remark}
\theoremstyle{remark}
\newenvironment{romenumerate}[1][-10pt]{
\addtolength{\leftmargini}{#1}\begin{enumerate}
 }{\end{enumerate}}
\newcounter{oldenumi}
{\setcounter{oldenumi}{\value{enumi}}
\begin{romenumerate} \setcounter{enumi}{\value{oldenumi}}}
{\end{romenumerate}}
\newcounter{thmenumerate}
\newenvironment{thmenumerate}
{\setcounter{thmenumerate}{0}%
 \def\item{\par
 \refstepcounter{thmenumerate}\textup{(\roman{thmenumerate})\enspace}}
}
{}
\newcounter{xenumerate}   
\newcommand\pfitemx[1]{\par#1:}
\newcommand\pfitemref[1]{\pfitemx{\ref{#1}}}
\newcommand{\refT}[1]{Theorem~\ref{#1}}
\newcommand{\refC}[1]{Corollary~\ref{#1}}
\newcommand{\refL}[1]{Lemma~\ref{#1}}
\newcommand{\refR}[1]{Remark~\ref{#1}}
\newcommand{\refRs}[1]{Remarks~\ref{#1}}
\newcommand{\refS}[1]{Section~\ref{#1}}
\newcommand{\refSs}[1]{Sections~\ref{#1}}
\newcommand{\refSS}[1]{Section~\ref{#1}}
\newcommand{\refD}[1]{Definition~\ref{#1}}
\newcommand{\refE}[1]{Example~\ref{#1}}
\newcommand{\refEs}[1]{Examples~\ref{#1}}
\newcommand{\refApp}[1]{Appendix~\ref{#1}}
\newcommand\XXX{XXX \marginal{XXX}}
\newcommand\REM[1]{{\raggedright\texttt{[#1]}\par\marginal{XXX}}}
\xdef\klockan{\the\count1.0\the\count255}
\xdef\klockan{\the\count1.\the\count255}\fi
\newcommand\nopf{\qed}   
\newcommand{\sumin}{\sum_{i=1}^n}
\newcommand{\sumkon}{\sum_{k=0}^n}
\newcommand{\sumkN}{\sum_{k=1}^N}
\newcommand{\sumkM}{\sum_{k=1}^M}
\newcommand{\sumkm}{\sum_{k=1}^m}
\newcommand{\prodin}{\prod_{i=1}^n}
\newcommand{\prodkm}{\prod_{k=1}^m}
\newcommand\set[1]{\ensuremath{\{#1\}}}
\newcommand\bigset[1]{\ensuremath{\bigl\{#1\bigr\}}}
\newcommand\Bigset[1]{\ensuremath{\Bigl\{#1\Bigr\}}}
\newcommand\biggset[1]{\ensuremath{\biggl\{#1\biggr\}}}
\newcommand\lrset[1]{\ensuremath{\left\{#1\right\}}}
\newcommand\xpar[1]{(#1)}
\newcommand\bigpar[1]{\bigl(#1\bigr)}
\newcommand\Bigpar[1]{\Bigl(#1\Bigr)}
\newcommand\biggpar[1]{\biggl(#1\biggr)}
\newcommand\lrpar[1]{\left(#1\right)}
\newcommand\xcpar[1]{\{#1\}}
\newcommand\bigabs[1]{\bigl|#1\bigr|}
\newcommand\Bigabs[1]{\Bigl|#1\Bigr|}
\def\rompar(#1){\textup(#1\textup)}    
\newcommand\xfrac[2]{#1/#2}
\newcommand\Bigparfrac[2]{\Bigpar{\frac{#1}{#2}}}
\def\xexp(#1){e^{#1}}
\newcommand\ceil[1]{\lceil#1\rceil}
\newcommand\floor[1]{\lfloor#1\rfloor}
\newcommand\setn{\set{1,\dots,n}}
\newcommand\setnn{[n]}
\newcommand\setNN{[N]}
\newcommand\ntoo{\ensuremath{{n\to\infty}}}
\newcommand\Ntoo{\ensuremath{{N\to\infty}}}
\newcommand\norm[1]{\|#1\|}
\newcommand\bignorm[1]{\bigl\|#1\bigr\|}
\newcommand\Bignorm[1]{\Bigl\|#1\Bigr\|}
\newcommand\punkt{.\spacefactor=1000}    
\newcommand\ie{i.e\punkt}
\newcommand\eg{e.g\punkt}
\newcommand\cf{cf\punkt}
\newcommand\ii{\mathrm{i}}
\newcommand\bbR{\mathbb R}
\newcommand\bbC{\mathbb C}
\newcommand\bbN{\mathbb N}
\newcommand\bbZ{\mathbb Z}
\newcounter{CC}
\newcounter{cc}
\renewcommand\Re{\operatorname{Re}}
\newcommand\E{\operatorname{\mathbb E{}}}
\renewcommand\P{\operatorname{\mathbb P{}}}
\newcommand\arccosh{\operatorname{arccosh}}
\newcommand\sign{\operatorname{sign}}
\newcommand\gd{\delta}
\newcommand\gf{\varphi}
\newcommand\gam{\gamma}
\newcommand\gl{\lambda}
\newcommand\gL{\Lambda}
\newcommand\gs{\sigma}
\newcommand\gth{\theta}
\newcommand\eps{\varepsilon}
\renewcommand\phi{\xxx}  
\newcommand\cD{\mathcal D}
\newcommand\cE{\mathcal E}
\newcommand\cF{\mathcal F}
\newcommand\cL{{\mathcal L}}
\newcommand\cM{\mathcal M}
\newcommand\cP{\mathcal P}
\newcommand\cS{{\mathcal S}}
\newcommand\cT{{\mathcal T}}
\newcommand\indic[1]{\boldsymbol1\xcpar{#1}} 
\newcommand\indicq[1]{\boldsymbol1_{\xcpar{#1}}}
\newcommand\smatrixx[1]{\left(\begin{smallmatrix}#1\end{smallmatrix}\right)}
\newcommand\qw{^{-1}}
\newcommand\qq{^{1/2}}
\newcommand\intoi{\int_0^1}
\newcommand\intoo{\int_0^\infty}
\newcommand\oi{[0,1]}
\newcommand\oio{[0,1)}
\newcommand\ooo{[0,\infty)}
\newcommand\dd{\,\mathrm{d}}
\newcommand\ddx{\mathrm{d}}
\newcommand\rhs{right-hand side}
\newcommand\xx{{\mathbf{x}}}
\newcommand\yy{\mathbf{y}}
\newcommand\tensor{\otimes}
\newcommand\hL{\hat L}
\newcommand\sss{\textsf{s}}
\newcommand\normpi[1]{\norm{#1}_{\pi}}
\newcommand\normpiq[2]{\norm{#1}_{\pi;\,\,#2}}
\newcommand\normpix[1]{\norm{#1}_{\pi}^*}
\newcommand\normpixq[2]{\norm{#1}_{\pi;\,\,#2}^*}
\newcommand\normpis[1]{\norm{#1}_{\pi,\sss}}
\newcommand\normpisq[2]{\norm{#1}_{\pi,\sss;\,#2}}
\newcommand\normpisx[1]{\norm{#1}_{\pi,\sss}^*}
\newcommand\normpip[1]{\norm{#1}_{\pi,+}}
\newcommand\normpipq[2]{\norm{#1}_{\pi,+;\,#2}}
\newcommand\normpipn[1]{\norm{#1}_{\pi,+,n}}
\newcommand\normpisp[1]{\norm{#1}_{\pi,\sss,+}}
\newcommand\normpispq[2]{\norm{#1}_{\pi,\sss,+;\,#2}}
\newcommand\normpispx[1]{\norm{#1}_{\pi,\sss,+}^*}
\newcommand\bignormpi[1]{\bignorm{#1}_{\pi}}
\newcommand\bignormpis[1]{\bignorm{#1}_{\pi,\sss}}
\newcommand\bignormpip[1]{\bignorm{#1}_{\pi,+}}
\newcommand\bignormpisp[1]{\bignorm{#1}_{\pi,\sss,+}}
\newcommand\Bignormpisp[1]{\Bignorm{#1}_{\pi,\sss,+}}
\newcommand\Bignormpispq[2]{\Bignorm{#1}_{\pi,\sss,+;\,#2}}
\newcommand\normabspi[1]{\norm{#1}_{|\pi|}}
\newcommand\normabspis[1]{\norm{#1}_{s,|\pi|}}
\newcommand\normp[1]{\norm{#1}_{+}}
\newcommand\normq[1]{\norm{#1}_{\Delta}}
\newcommand\normqp[1]{\norm{#1}_{\Delta,+}}
\newcommand\fL{\mathfrak L}
\newcommand\fP{\mathfrak P}
\newcommand\fS{\mathfrak S}
\newcommand\fSn{\fS_n}
\newcommand\nE{{}^n\!E}
\newcommand\nek{\nE;\bbK}
\newcommand\nef{\nE;F}
\newcommand\bbK{\mathbb K}
\newcommand\xxn{x_1,\dots,x_n}
\newcommand\fLs{\fL_\sss}
\newcommand\cLs{\cL_\sss}
\newcommand\tL{\tilde L}
\newcommand\hompol{homogeneous polynomial}
\newcommand\hompoln{\hompol{} of degree $n$}
\newcommand\hompolsn{\hompol{s} of degree $n$}
\newcommand\chp{\check p}
\newcommand\nn{^{\tensor n}}
\newcommand\nnx[1]{^{\tensor{#1}}}
\newcommand\nnpi{^{\tensor n}_{\pi}}
\newcommand\nnpip{^{\tensor n}_{\pi,+}}
\newcommand\nnpiq{^{\widehat{\tensor} n}_{\pi}}
\newcommand\snn{^{\vee n}}
\newcommand\snnx[1]{^{\vee #1}}
\newcommand\snnpi{^{\vee n}_{\pi}}
\newcommand\snnpix[1]{^{\vee #1}_{\pi}}
\newcommand\snnpip{^{\vee n}_{\pi,+}}
\newcommand\snnpipx[1]{^{\vee #1}_{\pi,+}}
\newcommand\snnpis{^{\vee n}_{\pi,\sss}}
\newcommand\snnpisx[1]{^{\vee #1}_{\pi,\sss}}
\newcommand\snnpisp{^{\vee n}_{\pi,\sss,+}}
\newcommand\snnpispx[1]{^{\vee #1}_{\pi,\sss,+}}
\newcommand\bL{\bar L}
\newcommand\tensors{\tensor\dotsm\tensor}
\newcommand\xxtensorn{x_1\tensors x_n}
\newcommand\vees{\vee\dotsm\vee}
\newcommand\xxveen{x_1\vees x_n}
\newcommand\ellaa{\ell_1^2}
\newcommand\ellaap{(\ellaa)_+}
\newcommand\pn{P_{\le n}}
\newcommand\chiab{\chi_{a,b}}
\newcommand\cd{[c,d]}
\newcommand\igs{\iota_\gs}
\newcommand\wii{[-1,1]}
\newcommand\kk{\kappa}
\newcommand\XX{{\mathbf{X}}}
\newcommand\muXX{\mu_{\XX}}
\newcommand\gfxx{\gf_{\xx}}
\newcommand\ff{\eta}
\newcommand\cMP[1]{\cM(\cP(#1))}
\newcommand\cMPS{\cMP{S}}
\newcommand\cl{c_{\textsf{L}}}
\newcommand\cs{c_{\sss}}
\newcommand\cp{c_{+}}
\newcommand\csp{c_{\sss,+}}
\newcommand\cssp{c_{\sss;\,\sss,+}}
\newcommand\cpsp{c_{+;\,\sss,+}}
\newcommand\Epp{E^+}
\newcommand\Ep{E_+}
\newcommand\kkx[1]{\kk(#1)}
\newcommand\kkn{\kkx{n}}
\newcommand\kknN{\kkx{n,N}}
\newcommand\xiu{u}
\newcommand\opii{[0,\frac{\pi}2]}
\newcommand\dual{^*}
\newcommand\exch{exchangeable}
\newcommand\ext{extendible}
\newcommand\Next{$N$-\ext}
\newcommand\PiNn{\Pi_{N,n}}
\newcommand\kknNm{\kk(n,N;m)}
\newcommand\citat[1]{``#1''} 
\begin{document}

\begin{abstract} 
We define new norms for symmetric tensors over ordered normed spaces; these
norms are defined by considering linear combinations of tensor products or
powers of positive elements only. Relations between the different norms are
studied. The results are applied to the problem of representing a finitely
exchangeable distribution as a mixture of powers, i.e, mixture of
distributions of i.i.d.\
sequences, using a signed mixing measure.
\end{abstract}

\maketitle


\section{Introduction}\label{S:intro}

Let $E$ be a normed space, and consider a tensor $\xx\in E\nn$.
By definition $\xx$ is a linear combination of elementary tensors
$x_1\tensors x_n$, and, roughly speaking, the projective tensor norm
$\normpi{\xx}$ measures 
how large such a linear combination has to be; see
the definition \eqref{normpi} below for a formal statement.

If the tensor $\xx$ is symmetric, it can also be written as a linear
combination of tensor powers $x\nn$. In general, such decompositions are
larger, and the symmetric projective norm \eqref{normpis} introduced by
\cite{Floret} 
measures how large. 

In the case when the normed space $E$ is an ordered space,
it also make sense to ask about decompositions into tensor products or
tensor powers of positive elements only.
We define in \eqref{normpip} and \eqref{normpisp} two norms on symmetric
tensors that measure the size of such decompositions. This gives four
different norms on the space $E\snn$ of symmetric tensors; 
they are all equivalent but, in general, different.

We study these norms and relations betweeen them in 
\refSs{Spos}--\ref{Sgamma}. 
In particular, we study the norms of the identity operator
between the four spaces obtained by equipping $E\snn$ with these norms, \ie,
the best constants in the inequalities relating these norms to each other.
(These constants depend on the space $E$ and on the order $n$.)
One of these constants is known as the \emph{polarization constant}
\cite{Dineen};
three other of them, defined in \refS{Spos}, are natural versions for 
ordered spaces, and we call them  
\emph{positive polarization constants}.
Among other results, we show that the space $\ell_1$ is extreme for several
of these polarization constants.

One motivation for the present paper is the problem of representing
finitely exchangeable distributions of random vectors as mixtures of 
distributions of independent
sequences. This problem is described more fully in \refS{Sexch}.
It is well known that, in contrast to de Finetti's theorem
for infinite exchangeable sequences,
such representations with a probability measure as the  mixing measure are
in general not possible for  finitely exchangeable distributions; however, a
substitute exists where the mixing measure is a signed measure
\cite{DM,Jaynes,KS06,SJ308}.
A natural question is how large the norm of this mixing measure has to be,
and it is shown in \refS{Sexch} that this is essentially equivalent to
studying one of the positive tensor norms defined in \refS{Spos}, in the
special case when $E=\ell_1$ (or a finite-dimensional $\ell_1^m$).
We use this to derive several new results on the optimal norm of the mixing
measure.

\refS{Sex} gives some simple explicit examples in the case when $E$ is a
Euclidean space.

\refSs{Spoly} and \ref{Stensor} contain background material, surveying
definitions and elementary properties of polarization, polarization
constants and tensor products. 
These sections provide background and easy references to
various facts for use in later sections. (There are no new results there.)

In the main part of the paper, starting with \refS{Spos}, we consider
ordered normed spaces, and thus spaces over $\bbR$.
However, in the introductory \refSs{Spoly} and \ref{Stensor},
no ordering is considered, so $E$ can be any normed space,
with real or complex scalars.

Another type of polarization constants, called \emph{linear polarization
  constants} has also been studied. There are, as far as we know, no direct
connections with the polarization constants studied here. 
However, we find it interesting to compare with these constants, and
therefore (and to prevent confusion with them),
we give a short survey of them in \refApp{Alinear}.

\subsection{Some notation}\label{SSnot}
We consider linear spaces over $\bbK$, where
$\bbK$ is either $\bbR$ or $\bbC$. 
In particular, $E$ or $F$ is always a normed space over $\bbK$. 
Furthermore,  $n\ge1$ is an integer, usually fixed but arbitrary.
(We sometimes tacitly assume that the spaces have non-zero dimension.)

For $1\le m<\infty$,
$\ell_p^m$ is $\bbK^m$ with the $\ell_p$-norm. 
In particular, $\ell_2^m$ is the usual Euclidean
space $\bbR^m$ or $\bbC^m$.
We also write $\ell_p^\infty=\ell_p$,
and let $\ell_p(S)$ denote the $\ell_p$ space with index set $S$, \ie,
$\ell_p(S):=L^p(S,\mu)$ where $\mu$ is the counting measure on $S$.
Thus $\ell_p=\ell_p(\bbN)$ and $\ell_p^m=\ell_p(\set{1,\dots,m})$.
The standard basis in $\ell_p$ or $\ell_p^m$ is denoted by $(e_i)$.

For a normed space $E$, 
$B(E):=\set{x\in E:\norm x\le1}$,
the closed unit ball of $E$.

``Positive'' should generally be interpreted as ``non-negative''.

For a real number $x$, $\floor x$ and $\ceil x$ are the integers obtained by
rounding $x$ downwards and upwards, respectively.

\section{Symmetric multilinear forms and polynomials}
\label{Spoly}

In this section, we review some basic theory of symmetric multilinear forms
and operators, including the important polarization formula.
See \eg{} \citet[Section 1.1]{Dineen} for further details.
In this section, we allow both real and complex scalars; we therefore denote
the scalar field by $\bbK$ ($=\bbR$ or $\bbC$).

$\fL(\nef)=\fL(E,\dots,E;F)$ denotes the space of all $n$-linear operators
$E^n\to F$. We will mainly consider the case $F=\bbK$:
$\fL(\nek)=\fL(E,\dots,E;\bbK)$ is the space of all $n$-linear forms
$E^n\to\bbK$. 

It is well-known that an $n$-linear operator $L:E^n\to F$ is continuous if and
only if it is bounded, \ie, if the norm
\begin{equation}\label{normL}
  \norm{L}:=\sup\bigset{|L(\xxn)|:\norm{x_1},\dots,\norm{x_n}\le1}
\end{equation}
is finite.
Let 
\begin{equation}
  \cL(\nef):=\bigset{L\in\fL(\nef):\norm{L}<\infty}
\end{equation}
be the space of bounded $n$-linear operators $E^n\to F$.
This is a normed space with the norm $\norm{\,}$ in \eqref{normL}.
(It is a Banach space if $F$ is complete, \eg{} if $F=\bbK$.)

\begin{definitions}
Let $\fSn$ be the symmetric group of the $n!$
permutations of $\set{1,\dots,n}$. 
  \begin{romenumerate}
  \item 
If $L\in\fL(\nef)$ and $n\in\fSn$, 
then $L_\gs\in\fL(\nef)$ is given by
\begin{equation}
  L_\gs(\xxn):=L\bigpar{ x_{\gs(1)},\dots,x_{\gs(n)}}.
\end{equation}

\item \label{Lsymm}
$L\in\fL(\nef)$ is \emph{symmetric} if $L_\gs=L$ for all $\gs\in\fSn$.
Let 
\begin{equation}
\fLs(\nef):=\set{L\in\fL(\nef):L\text{ is symmetric}}   
\end{equation}
be the space of
symmetric $n$-linear operators $E^n\to F$, 
and $\cLs(\nef):=\fLs(\nef)\cap\cL(\nef)$ the
subspace of bounded (or, equivalently, continuous) symmetric $n$-linear operators.

\item 
If $L\in\fL(\nef)$, then its \emph{symmetrization} $\tL\in\fLs(\nef)$ is
given by
\begin{equation}\label{tL}
  \tL:=\frac{1}{n!}\sum_{\gs\in\fSn} L_\gs.
\end{equation}
  \end{romenumerate}
\end{definitions}

Note that $L$ is symmetric $\iff L=\tL$, and that the symmetrization map
$L\mapsto\tL$ is a linear projection of 
$\fL(\nef)$ onto $\fLs(\nef)$ and 
of $\cL(\nef)$ onto $\cLs(\nef)$.

\subsection{Polynomials}

If $L:E^n\to F$ is an $n$-linear operator (or any function on $E^n$), we define
$\hL:E\to F$ by
\begin{equation}\label{Lddd}
  \hL(x):=L(x,\dots,x).
\end{equation}
In other words, $\hL$ is the restriction of $L$ to the diagonal.

\begin{definitions}
  \begin{romenumerate}
  \item 
A function $q:E\to K$ is a \emph{\hompoln} if $q=\hL$ for some $n$-linear
  form $L\in\fL(\nek)$.
Let
\begin{equation}
  \fP_n(E):=\set{\hL:L\in\fL(\nek)}
\end{equation}
be the space of all \hompoln{} on $E$.

\item 
If $p$ is a \hompol{} 
on $E$, let 
\begin{equation}
  \norm{p}:=\sup\bigset{|p(x)|:\norm{x}\le 1},
\end{equation}
\ie, the usual sup-norm of the restriction of $p$ to the unit ball of $E$.

\item 
Let
\begin{equation}
  \cP_n(E):=\set{p\in\fP_n(E):\norm{p}<\infty},
\end{equation}
the space of bounded \hompoln. 
(Here 'bounded' means bounded on the unit ball, as for linear forms.)
This is a normed space with the norm $\norm\,$; we shall see in \refC{CPB}
that it is a Banach space.
  \end{romenumerate}
\end{definitions}

\begin{remark}
  We can define general polynomials on $E$ as linear combinations of
  homogeneous polynomials of different degrees. We will not study general
  polynomials in the present paper. 
\end{remark}

Note that if $L\in\fL(\nek)$, 
then $\hL=\hat{\tL}$.
Hence, it suffices to consider symmetric $L$ to define \hompol{s}:
\begin{equation}
  \fP_n(E)=\set{\hL:L\in\fLs(\nek)}.
\end{equation}

\subsection{Polarization}
We have the following important \emph{polarization identity}.

\begin{lemma}
  \label{Lp}
If\/ $L\in\fL(\nef)$, then
\begin{equation}
  \label{polo}
\tL(\xxn)=\frac{1}{2^nn!}
\sum_{\eps_1,\dots,\eps_n=\pm1} \eps_1\dotsm\eps_n 
\,\hL\biggpar{\sumin \eps_ix_i}.
\end{equation}
In particular,
if\/ $L\in\fLs(\nef)$, then
\begin{equation}
  \label{pols}
L(\xxn)=\frac{1}{2^nn!}
\sum_{\eps_1,\dots,\eps_n=\pm1} \eps_1\dotsm\eps_n 
\,\hL\biggpar{\sumin \eps_ix_i}.
\end{equation}
\end{lemma}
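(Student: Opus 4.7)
The plan is to prove \eqref{polo} directly by expansion; then \eqref{pols} is immediate since $\tL = L$ when $L$ is symmetric (by \ref{Lsymm}).

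First I would expand $\hL\bigpar{\sum_i\eps_i x_i}=L\bigpar{\sum_i\eps_i x_i,\dots,\sum_i\eps_i x_i}$ by $n$-linearity, obtaining
\begin{equation*}
\hL\Bigpar{\sumin \eps_i x_i}=\sum_{i_1,\dots,i_n=1}^n \eps_{i_1}\dotsm\eps_{i_n}\,L(x_{i_1},\dots,x_{i_n}).
\end{equation*}
Substituting this into the \rhs{} of \eqref{polo} and interchanging the order of summation gives
\begin{equation*}
\sum_{\eps_1,\dots,\eps_n=\pm1}\eps_1\dotsm\eps_n\,\hL\Bigpar{\sumin\eps_i x_i}
=\sum_{i_1,\dots,i_n}L(x_{i_1},\dots,x_{i_n})\sum_{\eps}\prod_{j=1}^n\eps_j^{1+c_j},
\end{equation*}
where $c_j:=|\set{k:i_k=j}|$ is the multiplicity of $j$ in the multi-index $(i_1,\dots,i_n)$.

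Next I would evaluate the inner sign-sum. Since $\sum_{\eps_j=\pm1}\eps_j^{1+c_j}$ equals $2$ if $1+c_j$ is even (\ie, $c_j$ is odd) and $0$ otherwise, the product over $j$ is $2^n$ when every $c_j$ is odd, and $0$ otherwise. The constraint $\sum_{j=1}^n c_j=n$ with each $c_j\ge1$ and odd forces $c_j=1$ for all $j$, \ie, $(i_1,\dots,i_n)$ must be a permutation of $(1,\dots,n)$. Therefore
\begin{equation*}
\sum_{\eps}\eps_1\dotsm\eps_n\,\hL\Bigpar{\sumin\eps_i x_i}
=2^n\sum_{\gs\in\fSn}L(x_{\gs(1)},\dots,x_{\gs(n)})
=2^n\sum_{\gs\in\fSn}L_\gs(\xxn),
\end{equation*}
which by \eqref{tL} equals $2^n n!\,\tL(\xxn)$. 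Dividing by $2^n n!$ yields \eqref{polo}, and \eqref{pols} follows immediately.

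The only real step requiring care is the combinatorial identification of which multi-indices survive the sign averaging; the rest is bookkeeping with multilinearity. I do not anticipate a substantial obstacle, as this is a classical computation.
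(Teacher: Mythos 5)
Your proof is correct and follows essentially the same route as the paper's: expand $\hL\bigpar{\sumin\eps_i x_i}$ by multilinearity, interchange the order of summation, and observe that the inner sign-average kills every multi-index except the permutations of $(1,\dots,n)$, each contributing $2^n L_\gs(\xxn)$. Your explicit parity bookkeeping with the multiplicities $c_j$ just spells out the step that the paper states more tersely ("the inner sum vanishes unless $i_1,\dots,i_n$ is a permutation").
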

\begin{proof}
  Expand
  \begin{equation}
	 \hL\biggpar{\sumin \eps_ix_i}
=\sum_{i_1,\dots,i_n=1}^n L\bigpar{\eps_{i_1}x_{i_1},\dots,\eps_{i_n}x_{i_n}}.
  \end{equation}
Thus
\begin{multline}
\sum_{\eps_1,\dots,\eps_n=\pm1} \eps_1\dotsm\eps_n 
 \hL\biggpar{\sumin \eps_ix_i}
\\
=\sum_{i_1,\dots,i_n=1}^n \sum_{\eps_1,\dots,\eps_n=\pm1} \eps_1\dotsm\eps_n 
L\bigpar{\eps_{i_1}x_{i_1},\dots,\eps_{i_n}x_{i_n}},  
\end{multline}
where the inner sum
vanishes unless $i_1,\dots,i_n$ is a permutation $\gs$ of $1,\dots,n$, in which
case it equals $2^n L_\gs(\xxn)$. Hence, \eqref{polo} follows by \eqref{tL},
and \eqref{pols} is a special case.
\end{proof}

\begin{remark}\label{Rpolgen}
  More generally, for any independent $\bbK$-valued random variables
  $\xi_1,\dots,\xi_n$ with finite $(n+1)$-th moments, $\E \xi_i=0$ and
  $\E|\xi_i|^2=1$, we have
\begin{equation}
  \label{polgen}
\tL(\xxn)
=\frac{1}{n!}\E \lrpar{\bar\xi_1\dotsm\bar\xi_n \hL\biggpar{\sumin \xi_ix_i}}.
\end{equation}
(The expectation in
\eqref{polgen} is well-defined since 
$\hL\bigpar{\sumin \xi_ix_i}$ lies in a finite-dimensional subspace of $F$
for any fixed $x_1,\dots,x_n$.)
The polarization identities \eqref{polo} and \eqref{pols} are 
obtained by taking $\xi_i=\pm1$ (with
probability $\frac12$ each). Sometimes, other choices are useful.
\end{remark}

\begin{corollary}
  \label{CP}
The mapping $\pi:L\mapsto\hL$ is a linear bijection of $\fLs(\nek)$ onto
$\fP_n(E)$. 
\nopf
\end{corollary}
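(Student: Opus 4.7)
The plan is to deduce this corollary directly from the polarization identity in \refL{Lp}, which is the key tool that makes the statement essentially immediate.

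First, I would note that linearity of the map $\pi: L \mapsto \hL$ is clear from the definition \eqref{Lddd}, since evaluation at a fixed point $(x,\dots,x)$ is a linear functional of $L$.

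Next, I would establish surjectivity. By the definition of $\fP_n(E)$, any \hompoln{} $q$ is of the form $q = \hL$ for some $L \in \fL(\nek)$. By the remark following the polynomial definition (or by direct verification), $\hL = \hat{\tL}$, so $q = \pi(\tL)$ with $\tL \in \fLs(\nek)$. Thus $\pi$ maps $\fLs(\nek)$ onto $\fP_n(E)$.

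The main point, and the only part requiring real work, is injectivity. Suppose $L \in \fLs(\nek)$ and $\hL = 0$ on $E$. Then the polarization formula \eqref{pols} gives
\begin{equation*}
L(\xxn) = \frac{1}{2^n n!} \sum_{\eps_1,\dots,\eps_n = \pm 1} \eps_1 \dotsm \eps_n \, \hL\biggpar{\sumin \eps_i x_i} = 0
\end{equation*}
for all $\xxn \in E$, so $L = 0$. Hence $\pi$ is injective, completing the proof. The main (and essentially only) obstacle is remembering that polarization recovers the symmetric multilinear form from its diagonal restriction; once \refL{Lp} is in hand there is no further difficulty.
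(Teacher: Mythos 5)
Your proposal is correct and takes exactly the approach the paper has in mind: the corollary is stated with no proof (marked as immediate), and the intended argument is precisely what you give—linearity is obvious, surjectivity follows from $\hL=\hat{\tL}$ (noted just before the corollary), and injectivity on symmetric forms is the content of the polarization identity \eqref{pols}.
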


If $p\in\fP_n(E)$, let $\chp$ denote $\pi\qw(p)$, \ie, the unique symmetric
$n$-linear form $\chp\in\fLs(\nek)$
such that $\hat{\chp}=p$. Thus $\chp$ is given by the \rhs{} of \eqref{pols},
with $\hL$ replaced by $p$.

\begin{lemma}
  Let $L\in\fLs(\nek)$. 
Then the following are equivalent.
\begin{romenumerate}
\item \label{lc}
$L:E^n\to\bbK$ is continuous.
\item \label{lb}
$L:E^n\to\bbK$ is bounded.
\item \label{hlc}
$\hL:E\to\bbK$ is continuous.
\item \label{hlb}
$\hL:E\to\bbK$ is bounded.
\end{romenumerate}
\end{lemma}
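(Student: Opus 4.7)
The plan is to prove the cycle (i) $\Leftrightarrow$ (ii) $\Rightarrow$ (iv) $\Rightarrow$ (iii) $\Rightarrow$ (iv) and (ii), where the only nontrivial step is (iv) $\Rightarrow$ (ii). The equivalence (i) $\Leftrightarrow$ (ii) for $n$-linear operators is recorded in the paragraph preceding \eqref{normL} and needs no comment; similarly, (ii) $\Rightarrow$ (iv) is immediate from
$|\hL(x)| = |L(x,\dots,x)| \le \norm L\,\norm x^n$, so in particular $\norm{\hL}\le\norm L$.

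For (iii) $\Leftrightarrow$ (iv), note that $\hL$ is homogeneous of degree $n$: $\hL(\gl x)=\gl^n \hL(x)$ for $\gl\in\bbK$. If $\hL$ is continuous at $0$, pick $\gd>0$ with $|\hL(y)|\le 1$ for $\norm y\le\gd$; then by homogeneity $|\hL(x)|\le \gd^{-n}$ for $\norm x\le1$, so $\hL$ is bounded. Conversely, if $\hL$ is bounded then $|\hL(x)|\le\norm{\hL}\norm x^n$, which (again by homogeneity, applied to differences via the polarization argument below, or directly since any bounded $n$-linear form is continuous) gives continuity. A cleaner route is to use the upcoming step: continuity of $\hL$ on $E$ follows from continuity of $L$ on $E^n$ via the diagonal embedding.

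The main point, and the only place where any real work is needed, is (iv) $\Rightarrow$ (ii). Here I would simply invoke the polarization identity \eqref{pols}: since $L$ is symmetric,
\begin{equation*}
L(\xxn)=\frac{1}{2^nn!}\sum_{\eps_1,\dots,\eps_n=\pm1}\eps_1\dotsm\eps_n\,\hL\Bigpar{\sumin\eps_ix_i}.
\end{equation*}
If $\norm{x_i}\le1$ for each $i$, then $\bignorm{\sumin\eps_ix_i}\le n$, and homogeneity together with the assumed bound on $\hL$ gives $\bigabs{\hL(\sumin\eps_ix_i)}\le n^n\norm{\hL}$. Summing the $2^n$ terms in \eqref{pols} with trivial bounds yields
\begin{equation*}
\norm L\le\frac{n^n}{n!}\norm{\hL},
\end{equation*}
so $L$ is bounded. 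Combined with (i) $\Leftrightarrow$ (ii), this closes the cycle.

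The expected obstacle is essentially nonexistent; the only content is the polarization estimate $\norm L\le(n^n/n!)\norm{\hL}$, which is precisely the classical \emph{polarization inequality} and will reappear in later sections as the definition of the polarization constant. Everything else is either a direct unfolding of definitions or the elementary homogeneity argument for polynomials.
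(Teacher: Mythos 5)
Your proof is correct and follows essentially the same route as the paper's: (i)\,$\Leftrightarrow$\,(ii) is taken as known, the implications down to $\hL$ are immediate from the definition of $\hL$, and the only substantive step, recovering boundedness of $L$ from boundedness of $\hL$, is the polarization identity \eqref{pols} with the same estimate $\norm L\le (n^n/n!)\norm{\hL}$. The one organizational difference is cosmetic: you insert a direct homogeneity argument for (iii)\,$\Leftrightarrow$\,(iv) and then notice yourself that the cleaner way to return to continuity is to close the cycle through (ii)\,$\Rightarrow$\,(i)\,$\Rightarrow$\,(iii) via the diagonal embedding, which is exactly what the paper does implicitly by proving (iii)\,$\Rightarrow$\,(i) from polarization.
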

\begin{proof}
  \ref{lc}$\iff$\ref{lb} is well-known, as said above.

\ref{lc}$\implies$\ref{hlc} and \ref{lb}$\implies$\ref{hlb} 
are immediate consequences of the  definition  \eqref{Lddd}.

\ref{hlc}$\implies$\ref{lc} and \ref{hlb}$\implies$\ref{lb}  
follow by the polarization identity \eqref{pols}.
\end{proof}

Consequently, the space $\cP_n(E)$ defined above as the space of all bounded
\hompolsn{} is also the space of all continuous \hompolsn.

\begin{corollary}
  \label{CP2}
The bijection $\pi:\fLs(\nek)\to\fP_n(E)$ restricts to a bijection
$\cLs(\nek)\to\cP_n(E)$. 
\nopf
\end{corollary}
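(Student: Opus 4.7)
The plan is to read off the statement directly from the preceding lemma and from \refC{CP}. By \refC{CP}, the map $\pi: L\mapsto \hL$ is already a linear bijection of $\fLs(\nek)$ onto $\fP_n(E)$, so it suffices to show that this bijection identifies the subsets $\cLs(\nek)\subset\fLs(\nek)$ and $\cP_n(E)\subset\fP_n(E)$ with each other.

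First, I would observe that by definition, $L\in\cLs(\nek)$ means that $L\in\fLs(\nek)$ and $L$ is bounded, while $\pi(L)=\hL$ lies in $\cP_n(E)$ exactly when $\hL\in\fP_n(E)$ is bounded. The preceding lemma states precisely that, for $L\in\fLs(\nek)$, boundedness of $L$ is equivalent to boundedness of $\hL$. Hence $L\in\cLs(\nek)$ if and only if $\pi(L)\in\cP_n(E)$, which shows that $\pi$ restricts to a bijection $\cLs(\nek)\to\cP_n(E)$.

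There is no real obstacle here; the corollary is purely a bookkeeping consequence of the lemma together with \refC{CP}. The only thing to notice is that the inverse map $\pi\qw$ sends $p\in\cP_n(E)$ to $\chp\in\fLs(\nek)$, and the polarization identity \eqref{pols} (applied with $\hL$ replaced by $p$) shows directly that $\chp$ is bounded whenever $p$ is, giving the reverse inclusion without any extra work.
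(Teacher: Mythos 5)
Your proof is correct and is exactly the intended argument: the paper marks the corollary with no proof precisely because it follows immediately by combining \refC{CP} (the algebraic bijection) with the preceding lemma (equivalence of boundedness of $L$ and $\hL$ for symmetric $L$), which is what you do.
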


\begin{corollary}
  \label{CPB}
$\cP_n(E)$ is isomorphic to $\cLs(\nek)$ 
as normed spaces, \ie, with equivalence of norms.
Hence, $\cP_n(E)$  is a Banach space.
\nopf
\end{corollary}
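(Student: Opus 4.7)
The plan is to use the bijection $\pi\colon\cLs(\nek)\to\cP_n(E)$ provided by \refC{CP2} and verify that both $\pi$ and $\pi\qw$ are bounded with respect to the given norms.

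First, the easy direction: if $L\in\cLs(\nek)$ and $\norm{x}\le 1$, then $|\hL(x)|=|L(x,\dots,x)|\le\norm{L}$, so $\norm{\hL}\le\norm{L}$. Hence $\pi$ is bounded with $\norm{\pi}\le1$.

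For the reverse direction, I would apply the polarization identity \eqref{pols}. Given $L\in\cLs(\nek)$ and $x_1,\dots,x_n\in B(E)$, we have $\bignorm{\sumin\eps_i x_i}\le n$ for every choice of signs $\eps_i=\pm1$, hence $\bigabs{\hL\bigpar{\sumin \eps_ix_i}}\le n^n\norm{\hL}$. Substituting into \eqref{pols} and taking absolute values yields
\begin{equation}
\bigabs{L(\xxn)}\le\frac{1}{2^nn!}\cdot 2^n\cdot n^n\norm{\hL}=\frac{n^n}{n!}\norm{\hL},
\end{equation}
so $\norm{L}\le\frac{n^n}{n!}\norm{\hL}$. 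Together with the opposite inequality above this gives
\begin{equation}
\norm{\hL}\le\norm{L}\le\frac{n^n}{n!}\norm{\hL},
\end{equation}
showing that $\pi$ is a topological isomorphism between the normed spaces $\cLs(\nek)$ and $\cP_n(E)$.

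It remains to show that $\cP_n(E)$ is complete. Since $\cL(\nek)$ is a Banach space (the target $\bbK$ being complete, as noted after the definition of $\cL(\nef)$) and symmetry is preserved under norm limits, $\cLs(\nek)$ is a closed subspace of $\cL(\nek)$ and hence itself a Banach space. Transporting this completeness through the topological isomorphism $\pi$ yields that $\cP_n(E)$ is a Banach space. There is no real obstacle here; the only subtlety is making sure one keeps track of the polarization constant $n^n/n!$ when inverting $\pi$, which is exactly the content of the polarization identity already proved in \refL{Lp}.
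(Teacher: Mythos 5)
Your proof is correct and follows exactly the route the paper has in mind: the two inequalities you derive are precisely \eqref{er1}--\eqref{er2}, which the paper states immediately after the corollary as consequences of \eqref{Lddd} and the polarization identity \eqref{pols}, and completeness is transported through the resulting isomorphism from the closed subspace $\cLs(\nek)$ of the Banach space $\cL(\nek)$.
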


More precisely, \eqref{Lddd} and \eqref{pols} yield the following
inequalities for $L\in\cLs(\nek)$ (or more generally $L\in\fLs(\nek)$,
allowing the values $+\infty$ for the norms).
\begin{align}
  \norm{\hL}&\le \norm{L}, \label{er1}
\\
\norm{L}& \le \frac{n^n}{n!} \norm{\hL}. \label{er2}
\end{align} 

Define, for a multilinear form (or any function) $L$ on $E^n$
\begin{equation}\label{normLs}
  \normq{L}:=\norm{\hL}=\sup\bigpar{|L(x,\dots,x)|:\norm x\le1}.
\end{equation}
Then \eqref{er1}--\eqref{er2} can also be written
\begin{align}
  \normq{L}&\le \norm{L}, \label{ma1}
\\
\norm{L} &\le \frac{n^n}{n!} \normq{L}. \label{ma2}
\end{align} 
Hence, $\norm\,$ and $\normq\,$ are two equivalent norms on $\cLs(\nek)$.

\subsection{Polarization constants}

\begin{definition}\label{Dc}
  The polarization constant $\cs(n,E)$ is defined by,
see \cite[Definition 1.40]{Dineen},
  \begin{equation}\label{c}
	\cs(n,E):=\sup_{L\in\cLs(\nek)}\frac{\norm{L}}{\norm{\hL}}
=\sup_{L\in\cLs(\nek)}\frac{\norm{L}}{\normq{L}},
  \end{equation}
where, as in similar suprema below, we 
define $\frac{0}0:=0$. 
Equivalently, $\cs(n,E)$ is the norm of the linear operator
$\pi\qw:\cP_n(E)\to\cLs(\nek)$, see \refC{CP2}.
\end{definition}

\begin{remark}
  It is an easy consequence of the Hahn--Banach theorem that the supremum
  \eqref{c} remains the same if we consider $n$-linear operators
  $L\in\cLs(\nef)$ where $F\neq0$ is a normed space.
\end{remark}

Since $\hL=\hat{\tL}$, we also have
  \begin{equation}\label{c2}
	\cs(n,E)=\sup_{L\in\cL(\nek)}\frac{\norm{\tL}}{\norm{\hat{\tL}}}
=\sup_{L\in\cL(\nek)}\frac{\norm{\tL}}{\norm{\hL}}.
  \end{equation}

By \eqref{c} and \eqref{er1}--\eqref{er2},
\begin{equation}
  \label{winston}
1\le \cs(n,E)\le \frac{n^n}{n!}.
\end{equation}

Both inequalities in \eqref{winston} can be attained.
(The upper bound in \eqref{winston} was conjectured by Mazur and Orlicz in
\citat{The Scottish Book}, and proved in 1932 by Martin; see \citet{Harris}
and the references there.)

\begin{example}
  \label{El2}
For any Hilbert space $H$ 
(real or complex; of finite or infinite dimension) and any $n\ge1$,
$\cs(n,H)=1$; see
\citet{Banach}. See further \cite{Harris}.
\end{example}

\begin{example}
  \label{El1}
For any $n\ge1$ and any $m\ge n$,
$\cs(n,\ell_1^m)=\cs(n,\ell_1)=n^n/n!$.

To see this, let $n\le m\le\infty$ 
and define
$L\in\cL({}^n\ell_1^m,\bbK)$ by
\begin{equation}\label{el1}
L(x_1,\dots,x_n)=\prodin x_{ii},
\qquad \text{where } x_i=(x_{ij})_{j=1}^m.
\end{equation}
$L$ is not symmetric, so we consider its symmetrization
$\tL$.
We have, letting $e_1,e_2,\dots$ be the usual basis vectors in $\ell_p^m$,
\begin{equation}
  \norm{\tL}\ge|\tL(e_1,\dots,e_n)|
=\frac{1}{n!}\sum_{\gs\in\fSn} L\bigpar{e_{\gs(1)},\dots,e_{\gs(n)}}
=\frac{1}{n!}
\end{equation}
and, by the arithmetic-geometric inequality,
if $x=(x_i)_1^m$,
\begin{equation}\label{eli}
|\hL(x)|=\prodin|x_i|
\le\Bigpar{\frac{1}{n}\sumin|x_i|}^n 
\le n^{-n}\norm{x}^n.
\end{equation}
Hence, $\norm{\hL}\le n^{-n}$, and by \eqref{c2},
\begin{equation}
  \cs(n,\ell_1^m)\ge \frac{\norm{\tL}}{\norm{\hL}}
\ge \frac{n^n}{n!}.
\end{equation}
The converse inequality follows by \eqref{winston}.

Consequently, recalling \eqref{winston} again, 
\begin{align}\label{gabriel}
  \sup_{E}\cs(n,E)=\cs(n,\ell_1^n)=\cs(n,\ell_1)
=\frac{n^n}{n!}.
\end{align}
Thus, $\ell_1$ is extremal among all normed spaces,
and so is $\ell_1^n$ when $n$ is given.
\end{example}

See \eg{} \cite{Dineen} and \cite{PappasKK}
for further examples.

\begin{remark}
  It seems likely that the polarization constants $\cs(n,E)$ (as well as other
  similar constants defined below) are (weakly) increasing in $n$, but as
  far as I know, this is an open problem. (Cf.\ \refR{Rsuper}.)
\end{remark}

\begin{remark}\label{Rc}
  \citet[Definition 1.10]{Dineen} defines also
  \begin{equation}\label{rc}
	\cs(E):=\limsup_\ntoo \cs(n,E)^{1/n}.
  \end{equation}
It is an obvious conjecture that
the limit always exists, \ie, that
$\limsup$ can be replaced by $\lim$ in \eqref{rc};
however, this seems to be unproven so far.
The same applies to the related quantities in \refR{Rcsp}.
Cf.\  \refR{Rsuper} for a positive result for another ``polarization constant''.

By \eqref{winston} and Stirling's formula, for any normed space $E$,
\begin{equation}\label{rc2}
  1\le \cs(E)\le e,
\end{equation}
with both bounds attained since $\cs(H)=1$ for a Hilbert space $H$ and
$\cs(\ell_1)=e$ by \refEs{El2} and \ref{El1}. 
As another example, \cite[Proposition 1.43]{Dineen} implies
that $\cs(\ell_\infty)\le e/2$.
\end{remark}

\section{Tensor products}\label{Stensor}

In this section we recall some basic properties of tensor products and
symmetric tensor products. (These results are not new, but we present them
in a form suitable for later use.)
See \eg{} \citet[Chapters 1 and 2]{Ryan}, 
\citet[Chapter 1]{Dineen} and \citet{Floret}
for basic definitions, further
details and many other things not mentioned here.
In particular, note that we only consider \emph{tensor powers}, \ie, tensor
products of a space with itself (one or several times).
Again, we allow in this section both real and complex scalars.

\subsection{The projective tensor norm}
Let $E\nn=E\tensor\dotsm\tensor E$ be the algebraic $n$:th tensor power of
$E$. Recall that an element $\xx\in E\nn$ can be written, non-uniquely, as a
linear combination 
\begin{equation}\label{tensor}
\xx=\sumkN a_k x_{1k}\tensor\dotsm\tensor x_{nk}  
\end{equation}
of \emph{elementary tensors} 
$x_{1k}\tensor\dotsm\tensor x_{nk} $
for some
$x_{ik}\in E$, $i=1,\dots,n$, $k=1,\dots,N$, and $a_k\in\bbK$.
(Here and below, $N$ is an arbitrary positive integer.)

The \emph{projective tensor norm} $\normpi{\,}$ on $E\nn$ is defined by
\begin{equation}\label{normpi}
  \normpi{\xx}:=
\inf\lrset{\sumkN |a_k|\norm{x_{1k}}\dotsm\norm{x_{nk}}
	:\xx=\sumkN a_k x_{1k}\tensor\dotsm\tensor x_{nk}  }.
\end{equation}
This is a norm on $E\nn$. 
We denote $E\nn$ with this norm by $E\nnpi$.

We use the notation $\normpiq{\xx}{E}$ when we want to show the space $E$
explicitly, but usually we omit $E$ from the notation.
(The same applies to the norms defined later.)

\begin{remark}
If $E$ has infinite dimension, then $E\nnpi$ is not complete even if $E$ is.
  The \emph{projective tensor power} $E\nnpiq$
of a Banach space $E$ is defined as
the completion of $E\nnpi$. 
The norms defined below on $E\nn$ or its subspace $E\snn$ (also defined
below)
are all equivalent
to $\normpi\,$, and thus the completions with respect to these norms
are the same, as vector spaces, as the completion $E\nnpiq$ or 
the corresponding completion of $E\snn$ (\ie, the closure of $E\snn$ in
$E\nnpiq$). 
Hence, the results below on \eg{} 
inequalities between the different norms extend
trivially to the completed spaces.

While it often is natural to work with completed spaces, we have in the
present paper not much need for them, and we will work with normed spaces
such as $E\nnpi$ without completing them.
Hence,
we leave extensions to completed tensor products to the reader.
\end{remark}

\begin{remark}
  \label{Rbop}
It is not difficult to see that for an elementary tensor $\xx=x_1\tensors x_n$,
\begin{equation}\label{bop}
  \normpi{\xxtensorn}=\norm{x_1}\dotsm\norm{x_n}.
\end{equation}
The projective norm is the largest norm on $E\nn$ that satisfies \eqref{bop}.
\end{remark}

\begin{remark}\label{R=}
Roughly speaking, the unit ball of $E\nnpi$ is spanned by the elementary
tensors 
$x_{1}\tensor\dotsm\tensor x_{n}$ with $x_1,\dots,x_n\in B(E)$.
More precisely $B(E\nnpi)$ equals the closed convex hull of
the set of these elementary tensors. 
If $\dim(E)<\infty$, we do not have to
take the closure
because the convex hull of a compact set is compact in a finite-dimensional
space \cite[Theorem 3.20(d)]{Rudin:FA};
thus $B(E\nnpi)$ then
equals the convex hull of the set of these elementary tensors.
This means that the infimum in \eqref{normpi} is attained when
$\dim(E)<\infty$. 
\end{remark}

\begin{remark}
  \label{Rx}
It follows from \eqref{normpi} 
or from \refR{R=}
that for any linear operator $T:E\nnpi\to F$,
where $F$ is a normed space,
\begin{equation}
  \label{rx}
\norm{T}=\sup\bigset{\norm{T(\xxtensorn)}:x_1,\dots,x_n\in B(E)}.
\end{equation}
Conversely, this characterizes $\normpi\,$.
\end{remark}

\begin{example}\label{Ematrix}
  In the finite-dimensional case $E=\bbK^m$ (with any norm),
the space $E\nnx2$ is naturally identified with the $m^2$-dimensional
space of $m\times m$ matrices. 
(We will use this without comment in some examples below.)
We recall two well-known examples of the projective tensor norm $\norm\,$ in
$E\nnx2$:  
If $E=\ell_1^m$, then the norm is the $\ell_1$-norm, so
$(\ell_1^m)\nnx2=\ell_1^{m\times m}$ \cite[Exercise 2.6]{Ryan}.
If $E=\ell_2^m$, then the norm in 
$(\ell_2^m)\nnx2$ of a matrix is its Trace class norm (also known as
nuclear norm and Schatten $S_1$ norm, 
see \eg{} \cite[\S3.8]{GK}, \cite[\S30.2]{Lax}, \cite[Chapter 48]{Treves}); 
if $A$ is a symmetric matrix 
(Hermitean in the complex case), 
then this norm equals
the sum of the absolute values of the $m$ eigenvalues.
\end{example}

The fundamental property of tensor products is that they linearize
multilinear operators. More precisely, in our case, for any linear space
$F$,
there is a natural
bijection between multilinear maps $L:E^n\to F$ and linear maps $\bL:E\nn\to
F$ 
determined by
\begin{equation}\label{LbL}
  L(x_1,\dots,x_n)=\bL(x_1\tensor\dotsm\tensor x_n).
\end{equation}
In particular, taking $F=\bbK$, this gives a $1$--$1$ correspondence between
$n$-linear forms on $E$ and linear forms on $E\nn$.
It follows  from \eqref{LbL}, 
 the definition \eqref{normL} and \eqref{rx}
that for an $n$-linear map
$L\in\fL(\nek)$,
the norm $\normpix{\bL}$ of $\bL$ as a linear functional on $E\nnpi$
equals the norm  $\norm{L}$ of $L$.

In the sequel, we abuse notation by denoting also the map
$E\nn\to \bbK$ corresponding to $L:E^n\to\bbK$ as in \eqref{LbL}
by the same symbol $L$ 
(instead of $\bL$). 
We thus have
\begin{equation}\label{normpix}
  \normpix L = \norm L.
\end{equation}
 The space $(E\nn)^*$
of bounded linear functionals on $E\nn$
is thus identified (isometrically) with $\cL(\nek)$.

\subsection{Symmetric tensor products}

A permutation $\gs\in\fSn$ defines an automorphism $\igs$ of $E\nn$
that is defined on elementary tensors by 
$\igs(x_1\tensors x_n):=x_{\gs(1)}\tensors x_{\gs(n)}$ and extended by linearity.
A tensor $\xx\in E\nn$ is \emph{symmetric} if $\igs(\xx)=\xx$ for every
$\gs\in\fSn$. 
The \emph{symmetric tensor product} $E\snn$ is the subspace
of $E\nn$ consisting of the symmetric tensors.

Define the \emph{symmetrization operator} 
$\gL:=\frac{1}{n!}\sum_{\gs\in\fSn} \igs$.
Then $\gL$ is a linear projection of $E\nn$ onto $E\snn$.
We define the \emph{elementary symmetric tensors}
\begin{equation}\label{vee}
  x_1\vee\dotsm\vee x_n:=
\gL(\xxtensorn)=
\frac{1}{n!}\sum_{\gs\in\fSn} 
x_{\gs(1)}\tensor\dotsm\tensor x_{\gs(n)}
\in E\snn.
\end{equation}
Note that the \emph{tensor powers} are elementary symmetric:
\begin{equation}
  \label{tp}
x\snn:=
x\vees x
=x\tensors x
=x\nn.
\end{equation}
We will mainly use the notation $x\nn$, also when discussing $E\snn$.

If $\xx\in E\snn$ is a symmetric tensor with a representation \eqref{tensor}, 
then also 
\begin{equation}\label{stensor}
\xx
=\gL(\xx)
=
\sumkN a_k x_{1k}\vees x_{nk}.  
\end{equation}
Hence, 
the linear space $E\snn$ is spanned by the tensors $x_1\vee\dots\vee x_n$.

Furthermore, $E\snn$ is also spanned by the (smaller) set of 
tensor powers $x\nn$ in \eqref{tp}.
This follows from the polarization identity \eqref{polo} applied to the
multilinear map 
$L:E^n\to E\nn$ given by
$L(x_1,\dots,x_n):= x_1\tensors x_n$,
which yields, using \eqref{vee} and \eqref{tL},
\begin{equation}\label{polt}
  \begin{split}
x_1\vees x_n &
=\tL(x_1,\dots,x_n)
  =
\frac{1}{2^nn!}
\sum_{\eps_1,\dots,\eps_n=\pm1} \eps_1\dotsm\eps_n 
\biggpar{\sumin \eps_ix_i}\nn.  	
  \end{split}
\end{equation}

It follows easily, using symmetrization by $\gL$ as in \eqref{stensor}, 
that for a symmetric tensor $\xx\in E\snn$, the projective
norm in \eqref{normpi} is also given by 
\begin{equation}\label{normpi2}
  \normpi{\xx}=
\inf\lrset{\sumkN |a_k|\norm{x_{1k}}\dotsm\norm{x_{nk}}
	:\xx=\sumkN a_k x_{1k}\vees x_{nk}  }.
\end{equation}

The 
\emph{symmetric projective tensor norm}
(or  \emph{projective s-tensor norm})
on $E\snn$,
introduced by \citet{Floret},
is defined by
\begin{equation}\label{normpis}
  \normpis{\xx}:=
\inf\lrset{\sumkN |a_k|\norm{x_{k}}^n
	:\xx=\sumkN a_k x_{k}\nn  }.
\end{equation}
By \eqref{normpi2}, \eqref{normpis} and \eqref{polt},
$\normpi{x}\le\normpis{x}<\infty$, so $\normpis\,$ is another norm on
$E\snn$. We will see in \eqref{normpipis} below that the norms are equivalent.
We denote the normed spaces obtained by equipping
$E\snn$ with the norms $\normpi\,$ and $\normpis\,$ by $E\snnpi$ and
$E\snnpis$, respectively.

\begin{remark}
  \label{Rbip}
It follows from \eqref{normpis} and \refR{Rbop} that for an elementary
tensor power 
$\xx=x\nn$,
\begin{equation}\label{bip}
  \normpis{x\nn}=
  \normpi{x\nn}=\norm{x}^n.
\end{equation}
The projective s-tensor norm is the largest norm on $E\snn$ that satisfies
\eqref{bip}. 
\end{remark}

\begin{remark}
  \label{R=s}
In analogy with \refR{R=},
the unit balls $B(E\snnpi)$  and $B(E\snnpis)$ 
equal the closed convex hull of
the sets
$\set{\xxveen:x_1,\dots,x_n\in B(E)}$
and $\set{\pm x\snn:x\in B(E)}$, respectively.
Again, 
if $\dim(E)<\infty$, we do not have to
take the closures, and thus
the infima in \eqref{normpi2} and \eqref{normpis} are attained.
\end{remark}

\begin{remark}
  \label{Rxs}
Similarly,
in analogy with \eqref{rx},
it follows from \eqref{normpi2} and \eqref{normpis} that
for any linear operator $T:E\snnpi\to F$,
where $F$ is a normed space,
\begin{align}
\norm{T}_{E\snnpi,F}&=\sup\bigset{\norm{T(\xxveen)}:x_1,\dots,x_n\in B(E)}.
\label{rix}
  \intertext{and}
\norm{T}_{E\snnpis,F}&=\sup\bigset{\norm{T(x\nn)}:x\in B(E)}.
\label{ris}
\end{align}
Conversely, these properties characterize the norms $\normpi\,$ and
$\normpis\,$ on $E\snn$.
\end{remark}

Similarly to the bijection between $\fL(\nE;F)$ and $\fL(E\nn;F)$ in
\eqref{LbL}, there is a bijection between symmetric multilinear maps $E^n\to
F$ and linear maps $E\snn\to F$ given by
\begin{equation}
  \label{LbLs}
L(x_1,\dots,x_n)=L(x_1\vees x_n),
\end{equation}
where we again abuse notation by using the same symbol for both operators.
In particular, taking $F=\bbK$, this yields a bijection between linear forms
on $E\snn$ and symmetric multilinear forms in $\fLs(\nek)$.

Let $L$ be a linear form on $E\snn$.
The norm of $L$ in the dual of $E\snnpi$ is
by \eqref{normpi2}, \eqref{LbLs} and \eqref{normL},
\begin{equation}\label{lt}
  \begin{split}
\normpix{L}
&=	
\sup\bigset{|L(x_1\vees x_n)|:\norm{x_1},\dots,\norm{x_n}\le1}
\\&
=	
\sup\bigset{|L(x_1,\dots, x_n)|:\norm{x_1},\dots,\norm{x_n}\le1}
\\&
=\norm{L}.
  \end{split}
\end{equation}
and the norm in the dual of $E\snnpis$ is
by \eqref{normpis}, \eqref{LbLs} and \eqref{normLs},
\begin{equation}\label{lts}
  \begin{split}
\normpisx{L}
&=	
\sup\bigset{|L(x\nn)|:\norm{x}\le1}
\\&
=	
\sup\bigset{|L(x,\dots, x)|:\norm{x}\le1}
\\&
=\normq{L}.
  \end{split}
\end{equation}

We obtain from \eqref{lt}--\eqref{lts} and the definition \eqref{c} 
immediately the following:

\begin{lemma}\label{LT}
  The polarization constant $\cs(n,E)$ is given by
  \begin{equation}
	\cs(n,E)=\sup_{L\in (E\snn)^*}\frac{\normpix L}{\normpisx L}.
  \end{equation}
In other words, $\cs(n,E)$ equals the norm of the identity map
$(E\snnpis)^*\to (E\snnpi)^*$.
\nopf
\end{lemma}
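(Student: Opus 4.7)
The plan is essentially to read off the identity from the bijection already set up between linear forms on $E\snn$ and symmetric $n$-linear forms on $E$, together with the two norm computations in \eqref{lt} and \eqref{lts}. Since the two norms $\normpi\,$ and $\normpis\,$ on $E\snn$ are equivalent (by the remark following \eqref{normpis}, made precise later in the paper), the dual spaces coincide as sets; we denote this common space simply by $(E\snn)^*$.

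First, I recall that \eqref{LbLs} gives a bijection $L \leftrightarrow L$ between linear forms on $E\snn$ and symmetric $n$-linear forms in $\fLs(\nek)$. Under this bijection, \eqref{lt} says that $\normpix{L} = \norm{L}$, the $n$-linear operator norm, while \eqref{lts} says that $\normpisx{L} = \normq{L} = \norm{\hL}$. In particular, $L$ is bounded as a linear functional on $E\snnpi$ (equivalently, on $E\snnpis$) if and only if the corresponding $L \in \fLs(\nek)$ lies in $\cLs(\nek)$, and the bijection restricts to an identification
\begin{equation*}
(E\snn)^* \longleftrightarrow \cLs(\nek).
\end{equation*}

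Now taking the supremum of the ratio $\normpix L / \normpisx L$ over $L \in (E\snn)^*$ and translating via the bijection above gives
\begin{equation*}
\sup_{L \in (E\snn)^*}\frac{\normpix L}{\normpisx L}
= \sup_{L \in \cLs(\nek)}\frac{\norm{L}}{\norm{\hL}}
= \cs(n,E),
\end{equation*}
where the last equality is the definition \eqref{c}. The second assertion of the lemma, that this equals the norm of the identity map $(E\snnpis)^* \to (E\snnpi)^*$, is just a reformulation: the norm of the identity between two normings of the same space is exactly the supremum of the ratio of the two norms on any non-zero vector.

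There is essentially no obstacle here — all the work has been done in setting up \eqref{LbLs}, \eqref{lt}, and \eqref{lts}, and in the definition \eqref{c}. The only point worth being careful about is the convention $0/0:=0$ (so that the formula makes sense when $L=0$) and the observation that $\normq{L}=0$ forces $L=0$ (by the polarization identity \eqref{pols}), so the supremum is unaffected by excluding the zero functional.
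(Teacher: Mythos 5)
Your argument is exactly the one the paper intends: Lemma \ref{LT} is stated with no proof because it is declared to follow immediately from \eqref{lt}--\eqref{lts} and the definition \eqref{c}, and your write-up simply spells out that deduction via the bijection \eqref{LbLs}. Correct, and the same route as the paper.
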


\begin{corollary}\label{CT}
The polarization constant 
$\cs(n,E)$ equals the norm of the identity map
$E\snnpi\to E\snnpis$.
In other words, for any $\xx\in E\snn$,
\begin{equation}
  \label{normpipis}
\normpi{\xx} \le \normpis{\xx}\le \cs(n,E)\normpi{ \xx}
\end{equation}
and $\cs(n,E)$ is the smallest constant for which this holds for all 
$\xx\in E\snn$. 
\end{corollary}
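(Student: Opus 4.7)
The plan is a standard duality argument using \refL{LT} together with the Hahn--Banach theorem. The left inequality $\normpi{\xx}\le\normpis{\xx}$ in \eqref{normpipis} was already noted in the paragraph following \eqref{normpis} (it is immediate from \eqref{normpi2}, \eqref{normpis} and \eqref{polt}), so what actually requires proof is the upper bound $\normpis{\xx}\le\cs(n,E)\normpi{\xx}$ together with sharpness of the constant.

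For the upper bound I would start from the Hahn--Banach representation of the norm on $E\snnpis$: for any $\xx\in E\snn$,
\begin{equation*}
\normpis{\xx}=\sup\bigset{|L(\xx)|:L\in(E\snn)\dual,\;\normpisx{L}\le1}.
\end{equation*}
For any such $L$, \refL{LT} (which identifies $\cs(n,E)$ with the norm of the identity $(E\snnpis)\dual\to(E\snnpi)\dual$) gives $\normpix{L}\le\cs(n,E)\normpisx{L}\le\cs(n,E)$, whence
\begin{equation*}
|L(\xx)|\le\normpix{L}\normpi{\xx}\le\cs(n,E)\normpi{\xx}.
\end{equation*}
Taking the supremum over $L$ yields $\normpis{\xx}\le\cs(n,E)\normpi{\xx}$.

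For sharpness I would run the argument in reverse: suppose $\normpis{\xx}\le C\normpi{\xx}$ for every $\xx\in E\snn$. Then for any linear form $L$ on $E\snn$ with $\normpisx{L}\le1$ we have $|L(\xx)|\le\normpis{\xx}\le C\normpi{\xx}$, so $\normpix{L}\le C$. Thus the norm of the identity $(E\snnpis)\dual\to(E\snnpi)\dual$ is at most $C$, and \refL{LT} then forces $\cs(n,E)\le C$.

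I do not expect any real obstacle here: the whole content of the corollary is the familiar principle that a bounded linear operator between normed spaces has the same norm as its adjoint, applied to the identity $E\snnpi\to E\snnpis$, with \refL{LT} supplying the dual side. The one small point to keep an eye on is that $(E\snn)\dual$ is unambiguous as a set of linear functionals because the two norms $\normpi\,$ and $\normpis\,$ are equivalent, so bounded functionals with respect to one are bounded with respect to the other.
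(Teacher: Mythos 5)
Your proof is correct and is exactly what the paper's one-line proof (``\refL{LT} and duality'') means: you have simply unpacked the standard fact that a bounded operator and its adjoint have the same norm, using Hahn--Banach for the upper bound and the definition of the dual norm for sharpness. No difference in substance from the paper's argument.
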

\begin{proof}
  \refL{LT} and duality.
\end{proof}

By \eqref{rix}, \refC{CT} is also equivalent to
\begin{equation}\label{johannes}
\cs(n,E)=\sup\bigset{\normpis{x_1\vees x_n}:\norm{x_1},\dots,\norm{x_n}\le1}.  
\end{equation}
In other words, by \eqref{normpis}, $\cs(n,E)$ describes how efficiently a
symmetric 
tensor $x_1\vees x_n$ with $x_1,\dots,x_n\in B(E)$ can be decomposed as a
linear combination of tensor powers $y_j\nn$.

\begin{example}\label{EBanach}
  For a Hilbert space $H$, \citet{Banach} showed $\cs(n,H)=1$, as said in
  \refE{El2}. Thus \refC{CT} yields
$\normpis{\xx}= \normpi{ \xx}$ for any $\xx\in H\snn$, $n\ge1$;
in other words, 
$H\snnpis=H\snnpi$ isometrically.
See  \cite[Section 5]{Friedland}.
\end{example}

\subsection{Functorial properties}\label{SSfunctor}
If $E$ and $F$ are two normed spaces and $T:E\to F$ is a bounded linear
operator, then $T$ induces a linear operator $T\nn:E\nn\to F\nn$ by
$T\nn(x_1\tensors x_n) = Tx_1\tensors Tx_n$; furthermore, $T\nn$ restricts
to $T\snn:E\snn\to F\snn$. 
We note the following well-known fact.

\begin{theorem}\label{Tfunctor}
 If $E$ and $F$ are  normed spaces
and $T:E\to F$ is a  bounded linear operator, then
$T\nn:E\nnpi\to F\nnpi$,
$T\snn:E\snnpi\to F\snnpi$
and $T\snn:E\snnpis\to F\snnpis$ all have norm $\norm{T}^n$.
\end{theorem}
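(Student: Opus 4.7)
The plan is to prove the upper bound $\|T^{\tensor n}\| \le \|T\|^n$ (and the analogous bounds for $T^{\vee n}$) directly from the definitions of the three norms, and then to derive the matching lower bound from the elementary-tensor-power identity \eqref{bip}.

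First, for the upper bound on $T\nn : E\nnpi \to F\nnpi$, I would take an arbitrary representation $\xx = \sumkN a_k x_{1k}\tensors x_{nk}$ of a tensor $\xx \in E\nn$. Applying $T\nn$ gives $T\nn(\xx) = \sumkN a_k (Tx_{1k})\tensors(Tx_{nk})$, and the submultiplicativity $\norm{Tx_{ik}} \le \norm{T}\,\norm{x_{ik}}$ yields
\begin{equation*}
\sumkN |a_k|\norm{Tx_{1k}}\dotsm\norm{Tx_{nk}} \le \norm{T}^n \sumkN |a_k|\norm{x_{1k}}\dotsm\norm{x_{nk}}.
\end{equation*}
Taking the infimum over all representations of $\xx$ gives $\normpi{T\nn \xx} \le \norm{T}^n \normpi{\xx}$, hence $\norm{T\nn} \le \norm{T}^n$. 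The same argument, but using the representation \eqref{normpi2} in terms of elementary symmetric tensors $x_{1k}\vees x_{nk}$, gives $\norm{T\snn : E\snnpi \to F\snnpi} \le \norm{T}^n$. For $T\snn : E\snnpis \to F\snnpis$, I would use the defining representation \eqref{normpis} as $\xx = \sumkN a_k x_k\nn$; then $T\snn(\xx) = \sumkN a_k (Tx_k)\nn$, and $\norm{Tx_k}^n \le \norm{T}^n \norm{x_k}^n$ again yields $\normpis{T\snn \xx} \le \norm{T}^n \normpis{\xx}$.

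For the matching lower bound, I would choose, for any $\eps>0$, a unit vector $x \in B(E)$ with $\norm{Tx} \ge \norm{T} - \eps$. By \eqref{bop} and \eqref{bip}, the elementary tensor power $x\nn$ satisfies $\normpi{x\nn} = \normpis{x\nn} = \norm{x}^n \le 1$. Since $T\nn(x\nn) = (Tx)\nn$, applying \eqref{bop} and \eqref{bip} in $F$ gives
\begin{equation*}
\normpi{T\nn(x\nn)} = \normpis{T\snn(x\nn)} = \norm{Tx}^n \ge (\norm{T}-\eps)^n.
\end{equation*}
Letting $\eps\to0$ gives the lower bound $\norm{T}^n$ for the operator norms $\norm{T\nn : E\nnpi\to F\nnpi}$ and $\norm{T\snn : E\snnpis\to F\snnpis}$. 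The remaining case $T\snn : E\snnpi\to F\snnpi$ works identically, since $x\nn$ lies in $E\snn$ and its $\normpi\,$-norm is still $\norm{x}^n$ by \eqref{bip}.

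I do not anticipate any real obstacle; the argument is essentially a one-line computation in each direction, the only mildly subtle point being that \eqref{bip} gives the exact value $\norm{x}^n$ for the norm of a tensor power in \emph{both} $E\snnpi$ and $E\snnpis$, so that a single test element $x\nn$ simultaneously witnesses the lower bound for all three operators.
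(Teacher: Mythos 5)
Your proof is correct and is essentially the same as the paper's: the paper's one-line proof simply cites the characterizations \eqref{rx}, \eqref{rix}, \eqref{ris} of operator norms out of the projective (s-)tensor spaces together with \eqref{bop} and \eqref{bip}, which is exactly what you re-derive inline by manipulating arbitrary decompositions for the upper bound and testing on tensor powers $x\nn$ for the lower bound.
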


\begin{proof}
  An immediate consequence of  \eqref{rx}, \eqref{rix}, \eqref{ris} 
together with \eqref{bop} and \eqref{bip}.
\end{proof}

There are some related simple results when we change the normed space.
Recall that the \emph{Banach--Mazur distance} between two isomorphic normed
spaces (in particular, Banach spaces) is $\inf\set{\norm {T}\norm{T\qw}}$,
taking the infimum over all isomorphisms $T:E\to F$.

\begin{theorem}
  \label{Tfunc}
  \begin{thmenumerate}
  \item 
If\/ $F$ is a quotient space of $E$, then $\cs(n,F)\le \cs(n,E)$.
\item 
If\/ $F$ is a $\kk$-complemented subspace of $E$, \ie, $F$ is a subspace and
there exists a projection $P:E\to F$ of norm $\norm{P}\le\kk$, then
$\cs(n,F)\le \kk^n \cs(n,E)$. In particular, if $F$ is $1$-complemented, then
$\cs(n,F)\le \cs(n,E)$.
\item 
If\/ $E$ and $F$ are isomorphic normed spaces, then
$\cs(n,F)\le d(E,F)^n \cs(n,E)$, where $d(E,F)$ is the Banach--Mazur distance.
In particular, $\cs(n,E)=\cs(n,F)$ when $E$ and $F$ are isometric.
  \end{thmenumerate}
\end{theorem}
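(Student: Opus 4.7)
The plan is to prove the three statements by transferring a bounded symmetric $n$-linear form on $F$ to one on $E$ and comparing the ratios $\norm{L}/\normq{L}$ on $F$ to their analogues on $E$, whose suprema are $\cs(n,F)$ and $\cs(n,E)$ by \eqref{c}. In each part I start from an arbitrary $L\in\cLs({}^{n}F;\bbK)$, construct a companion $\tilde L\in\cLs({}^{n}E;\bbK)$, compare the two norms $\norm{L},\normq{L}$ with $\norm{\tilde L},\normq{\tilde L}$, and conclude from $\norm{\tilde L}\le\cs(n,E)\normq{\tilde L}$.

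For (i), with $q:E\to F$ the quotient map, I put $\tilde L(x_1,\dots,x_n):=L(qx_1,\dots,qx_n)$. Since $\norm{q}\le 1$, the estimates $\norm{\tilde L}\le\norm{L}$ and $\normq{\tilde L}\le\normq{L}$ are immediate. The reverse inequalities come from the defining property of the quotient norm: every $y\in F$ with $\norm{y}<1$ has a lift $x\in E$ with $qx=y$ and $\norm{x}<1$. Applying this coordinatewise yields $\norm{L}\le\norm{\tilde L}$, and applying it with a single lift gives $\normq{L}\le\normq{\tilde L}$. Both norms are preserved, hence $\norm{L}/\normq{L}=\norm{\tilde L}/\normq{\tilde L}\le\cs(n,E)$, and (i) follows.

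For (ii), let $P:E\to F$ be a projection with $\norm{P}\le\kk$, and set $\tilde L(x_1,\dots,x_n):=L(Px_1,\dots,Px_n)$. Since $P$ restricts to $\mathrm{id}_F$ and $F$ carries the induced norm, the inclusion $B(F)\subseteq B(E)$ together with $\tilde L|_{F^n}=L$ gives $\norm{L}\le\norm{\tilde L}$. The estimate $\norm{Px}\le\kk\norm{x}$ gives $\normq{\tilde L}\le\kk^n\normq{L}$. Combining these with $\norm{\tilde L}\le\cs(n,E)\normq{\tilde L}$ yields $\norm{L}\le\kk^n\cs(n,E)\normq{L}$, whence $\cs(n,F)\le\kk^n\cs(n,E)$; the $1$-complemented case is the specialization $\kk=1$.

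For (iii), fix an isomorphism $T:E\to F$ and set $L_T(x_1,\dots,x_n):=L(Tx_1,\dots,Tx_n)$. The substitution $y_i=Tx_i$ yields $\norm{L}\le\norm{T^{-1}}^n\norm{L_T}$, since $\norm{y_i}\le 1$ forces $\norm{x_i}=\norm{T^{-1}y_i}\le\norm{T^{-1}}$; the diagonal counterpart is $\normq{L_T}\le\norm{T}^n\normq{L}$. Inserting $\norm{L_T}\le\cs(n,E)\normq{L_T}$ produces $\norm{L}\le(\norm{T}\,\norm{T^{-1}})^n\cs(n,E)\normq{L}$, and taking the supremum over $L$ followed by the infimum over $T$ gives $\cs(n,F)\le d(E,F)^n\cs(n,E)$. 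In the isometric case $d(E,F)=1$, and the same argument applied to $T^{-1}$ gives the opposite inequality. No step is really hard; the only point needing care is the coordinatewise quotient-lifting in (i), which must be carried out for both the full norm and for the diagonal norm to preserve the ratio exactly.
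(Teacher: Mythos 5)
Your argument is correct, but it runs on the dual side of the one in the paper. The paper proves all three parts at once via Lemma~\ref{Lfunc}: working with symmetric tensors, it uses the characterization \eqref{johannes} of $\cs(n,E)$ as $\sup\normpis{x_1\vees x_n}$ over $x_i\in B(E)$, lifts the $x_i$ from $F$ back to $E$, and pushes forward through $T\snn$ using the functorial property in Theorem~\ref{Tfunctor}. You instead stay with the original definition \eqref{c} in terms of symmetric multilinear forms, and pull each $L\in\cLs({}^nF;\bbK)$ back to $\tilde L(x_1,\dots,x_n)=L(Tx_1,\dots,Tx_n)$ on $E$, comparing $\norm{L}$, $\normq{L}$ with $\norm{\tilde L}$, $\normq{\tilde L}$. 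The two are precisely dual to each other (pushforward on tensors versus pullback on forms), and both give the theorem cleanly. What your route buys is that it avoids the tensor-product machinery entirely and requires only the definitions of $\norm{\,}$ and $\normq{\,}$; what the paper's route buys is a single unified lemma subsuming all three cases, which the paper then reuses (in its positive variants) elsewhere. One small point worth noting: your case (i) correctly handles the lifting both for the full norm $\norm{\,}$ and for the diagonal norm $\normq{\,}$; since the quotient lift is exact on the open unit ball, both norms are genuinely \emph{preserved} under $q^*$, not merely estimated, which is slightly stronger than what the paper's general lemma produces for that case.
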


The three parts of \refT{Tfunc} are proved by the same argument, which we
state more generally as a lemma.

\begin{lemma}\label{Lfunc}
  Suppose that $E$ and $F$ are normed spaces, that $T:E\to F$ is a bounded
  linear operator onto $F$, and that $A$ is a constant such that for every
  $x\in F$ and $\eps>0$, there exists $y\in E$ with $Ty=x$ and $\norm{y}\le
  (A+\eps)\norm{x}$. Then
$\cs(n,F)\le(A\norm T)^n\cs(n,E)$.
\end{lemma}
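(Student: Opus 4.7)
My plan is to combine the characterization of $\cs(n,F)$ from \eqref{johannes} with the hypothesized lifting property of $T$, routed through the functorial bound on $T\snn$ established in \refT{Tfunctor}.

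\textbf{Step 1.} Start from \refC{CT} combined with \eqref{johannes}, which gives
\begin{equation*}
\cs(n,F) = \sup\bigset{\normpis{y_1\vees y_n}:\norm{y_1},\dots,\norm{y_n}\le 1}.
\end{equation*}
Fix $\eps>0$ and arbitrary $y_1,\dots,y_n\in B(F)$; it suffices to bound $\normpis{y_1\vees y_n}$.

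\textbf{Step 2.} Using the hypothesis, lift each $y_i$ to $x_i\in E$ with $Tx_i=y_i$ and $\norm{x_i}\le (A+\eps)\norm{y_i}\le A+\eps$. By the functorial construction recalled in \refSS{SSfunctor}, $T\snn(x_1\vees x_n)=y_1\vees y_n$, and by \refT{Tfunctor} the operator $T\snn:E\snnpis\to F\snnpis$ has norm $\norm{T}^n$. Hence
\begin{equation*}
\normpis{y_1\vees y_n}
=\normpis{T\snn(x_1\vees x_n)}
\le \norm{T}^n\,\normpis{x_1\vees x_n}.
\end{equation*}

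\textbf{Step 3.} Convert the $\normpis\,$-norm on $E$ back to a product of norms via \refC{CT} and the obvious one-term estimate from \eqref{normpi2}:
\begin{equation*}
\normpis{x_1\vees x_n}
\le \cs(n,E)\,\normpi{x_1\vees x_n}
\le \cs(n,E)\,\prodin\norm{x_i}
\le \cs(n,E)\,(A+\eps)^n.
\end{equation*}
Combining the two displays yields $\normpis{y_1\vees y_n}\le \bigpar{(A+\eps)\norm{T}}^n\cs(n,E)$. Taking the supremum over $y_1,\dots,y_n\in B(F)$ and then letting $\eps\downto 0$ gives $\cs(n,F)\le (A\norm{T})^n\cs(n,E)$.

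There is no genuine obstacle here; the only point requiring care is ensuring the lifting in Step~2 is done uniformly for the $n$ factors (which is fine since we lift them one at a time, not jointly), and that the $+\eps$ slack disappears in the limit, which is clear since the final inequality is continuous in $A$.
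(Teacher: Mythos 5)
Your proof is correct and follows essentially the same route as the paper: lift the vectors via the hypothesis, use \refT{Tfunctor} to bound $T\snn$ on the symmetric projective tensor power, and appeal to the characterization of $\cs(n,E)$ from \eqref{johannes} (you split this last step into \refC{CT} plus the elementary bound $\normpi{x_1\vees x_n}\le\prodin\norm{x_i}$, but that is the same content). The only cosmetic difference is that you swap the roles of the letters $x_i$ and $y_i$ relative to the paper.
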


\begin{proof}
  We use \eqref{johannes}.
Let $x_1,\dots,x_n\in B(F)$ and let $\eps>0$.
By assumption, there exist $y_1,\dots,y_n\in E$ such that $\norm{y_i}\le
A+\eps$ and $Ty_i=x_i$. Then $T\snn(y_1\vees y_n)=x_1\vees x_n$ and thus,
using \eqref{johannes},
\begin{equation}
  \begin{split}
  \normpis{x_1\vees x_n}
&
\le \norm{T}^n   \normpis{y_1\vees y_n}
\le \norm{T}^n \cs(n,E)  \norm{y_1}\dotsm \norm{y_n}
\\&
\le \norm{T}^n \cs(n,E) (A+\eps)^n.	
  \end{split}
\raisetag\baselineskip
\end{equation}
Now let $\eps\to0$ and use \eqref{johannes} again.
\end{proof}

\begin{proof}[Proof of \refT{Tfunc}]
  We apply \refL{Lfunc} as follows:
  \begin{romenumerate}
  \item 
Let $T$ be the quotient mapping $E\to F$. Then $\norm{T}=1$, and, by
definition of the quotient norm, the assumption of the lemma holds with
$A=1$.
\item 
Let $T=P$ and let $A=1$; we can take $y=x$.
\item 
If $T:E\to F$ is an isomorphism, we take $y=T\qw x$ and the assumption holds
with $A=\norm{T\qw}$. Thus $\cs(n,F)\le(\norm{T}\norm{T\qw})^n\cs(n,E)$. Take
the infimum over $T$.
\qedhere
  \end{romenumerate}
\end{proof}

\begin{remark}\label{Rcfunc}
For the constants $\cs(E)$ defined in \refR{Rc} we obtain as an immediate
corollary of \refT{Tfunc} that in the three cases of the theorem, we have
$\cs(F)\le \cs(E)$, $\cs(F)\le \kk \cs(E)$ and $\cs(F)\le d(E,F)\cs(E)$,
respectively. 
\end{remark}

\begin{remark}\label{Rnotsubspace}
  It is not true in general  that $\cs(n,F)\le \cs(n,E)$ 
when $F$ is a subspace  of $E$.
For example, $\ell_1$
(as any separable Banach space) can be embedded isometrically 
as a subspace of $\ell_\infty$. 
However, by \cite[Proposition 1.43]{Dineen} and \eqref{gabriel},
$\cs(n,\ell_\infty)<\cs(n,\ell_1)$ for any $n\ge2$. 
(Also,
see \cite[p.\ 52]{Dineen}
and
\refR{Rc}, $\cs(\ell_\infty)<\cs(\ell_1)$.)
\end{remark}

\section{Positive tensor products and polarization constants}
\label{Spos}
In the remainder of the paper,
we assume that $\bbK=\bbR$, and that $E$ is an ordered
normed space, \ie, a normed space that is also an ordered linear space.
This means that there is given a closed
cone $\Epp$ of \emph{positive} elements in
$E$; the order is defined by $x\le y\iff y-x\in \Epp$, and, conversely,
$\Epp:=\set{x:x\ge0}$.

We assume also that $E=\Epp-\Epp$, \ie, that every $x\in E$ can be written as
a difference $y-z$ of two positive elements.
We define a new norm $\normp\,$ on $E$ by
\begin{equation}\label{normp}
  \normp{x}:=\inf\bigset{\norm{y}+\norm{z}:x=y-z,\; y\ge0,\; z\ge0}
\end{equation}
and note that the triangle inequality implies $\normp{x}\ge\norm x$.
Finally, we assume that
\begin{equation}\label{cp}
  \cp(E):=\sup\bigset{\normp x:\norm x\le1}
\end{equation}
is finite.
Thus, $\norm\,$ and $\normp\,$ are equivalent norms on $E$.
Let $\Ep$ denote $E$ equipped with the norm $\normp\,$.
Then, $\cp(E)$ is the norm of the identity operator $E\to\Ep$.

\begin{example}\label{Ecp}
Some standard examples are  $\ell_p^m$ and 
$\ell_p$, for $1\le p\le\infty$, and more generally  
$L^p(\cS,\cF,\mu)$ for any measure space $(\cS,\cF,\mu)$, with the standard
definition of positive elements.
It is easy to see that in these examples (for $m\ge2$) $\cp(E)=2^{1-1/p}$.
In particular, $\cp(\ell_1)=\cp(L^1(\cS,\cF,\mu))=1$, 
so in these spaces the norms
$\normp\,$ and $\norm\,$ coincide.
\end{example}

\begin{example}\label{Ecp2}
  The examples in \refE{Ecp} are examples of Banach lattices, which also
  include many other important Banach spaces, see \eg{} \cite{Schaefer} or
  \cite{Meyer-Nieberg} 
for  definition and further examples.
In a Banach lattice $E$, every $x\in E$ has a decomposition $x=x_+-x_-$ with 
$x_\pm\in \Epp$ and $\norm{x_\pm}\le\norm{x}$; hence $1\le\cp(E)\le2$.
\end{example}

\begin{example}\label{Ecp3}
  If we go beyond Banach lattices, then $\cp(E)$ may be arbitrarily large. A
  simple example is provided by $E=\bbR^2$ with usual positive cone (the first
  quadrant) and the norm $\norm{(x,y)}:=|x-y|+C|x+y|$ for a large constant
  $C$; then $\norm{(1,-1)}=2$ and $\normp{(1,-1)}=2C+2$. Thus $\cp(E)\ge  C+1$.
(In fact, equality holds.)
\end{example}

\subsection{Positive tensor products}

We are interested in decompositions of tensors using tensor products of
positive elements only.
If $E$ is an ordered normed space,  define 
in analogy with \eqref{normpi} and \eqref{normpis}
the tensor norms
\begin{equation}\label{normpip}
  \normpip{\xx}:=
\inf\lrset{\sumkN |a_k|\norm{x_{1k}}\dotsm\norm{x_{nk}}
	:\xx=\sumkN a_k x_{1k}\tensor\dotsm\tensor x_{nk},\, x_{ik}\ge0 }.
\end{equation}
on $E\nn$,  
and
\begin{equation}\label{normpisp}
  \normpisp{\xx}:=
\inf\lrset{\sumkN |a_k|\norm{x_{k}}^n
	:\xx=\sumkN a_k x_{k}\nn,\, x_k\ge0  }.
\end{equation}
on $E\snn$; these norms are thus defined using only positive elements in the
decompositions. 
For a symmetric tensor $\xx\in E\snn$, we have in analogy with
\eqref{normpi2}
also
\begin{equation}\label{normpip2}
  \normpip{\xx}=
\inf\lrset{\sumkN |a_k|\norm{x_{1k}}\dotsm\norm{x_{nk}}
	:\xx=\sumkN a_k x_{1k}\vees x_{nk},\, x_k\ge0 }.
\end{equation}
It is perhaps not obvious that $\normpisp \xx$ always is finite, \ie, that
there always exists a decomposition as in \eqref{normpisp}; this is part of
\refL{Lpi} below.

We first note that $\normpip\,$ is an ordinary projective tensor power norm, 
but for the  (in general) differently normed space $\Ep$.
 
\begin{lemma}\label{Lpi+}
The norm $\normpip\,$ equals the norm in
$(\Ep)_\pi\nn$.
\end{lemma}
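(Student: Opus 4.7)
The plan is to prove two inequalities, namely $\normpip{\xx} = \normpiq{\xx}{\Ep}$, by showing $\le$ and $\ge$ separately. The key auxiliary fact I shall repeatedly use is that $\normp{x} = \norm{x}$ whenever $x \ge 0$; this follows immediately from the definition \eqref{normp} applied to the trivial decomposition $x = x - 0$ together with the inequality $\normp{x} \ge \norm{x}$ noted just after \eqref{normp}.

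For the easy direction, $\normpip{\xx} \ge \normpiq{\xx}{\Ep}$, I would observe that every admissible decomposition in \eqref{normpip} (with all $x_{ik} \ge 0$) is also an admissible decomposition for the projective tensor norm on $(\Ep)\nn$ (where any $x_{ik} \in E$ is allowed), and the summand $|a_k| \norm{x_{1k}} \dotsm \norm{x_{nk}}$ equals $|a_k| \normp{x_{1k}} \dotsm \normp{x_{nk}}$ by the observation above. Taking infima over a smaller class of decompositions gives an inequality in the stated direction.

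For the harder direction, $\normpip{\xx} \le \normpiq{\xx}{\Ep}$, fix $\eps > 0$ and start with any representation $\xx = \sumkN a_k x_{1k} \tensor \dotsm \tensor x_{nk}$ with $x_{ik} \in E$. By the definition \eqref{normp}, I can choose for each $i,k$ a positive decomposition $x_{ik} = y_{ik} - z_{ik}$ with $\norm{y_{ik}} + \norm{z_{ik}} \le \normp{x_{ik}} + \eps$. Expanding multilinearly,
\begin{equation*}
x_{1k} \tensor \dotsm \tensor x_{nk} = \sum_{S \subseteq \setn} (-1)^{|S|} w^S_{1k} \tensor \dotsm \tensor w^S_{nk},
\end{equation*}
where $w^S_{ik} := z_{ik}$ if $i \in S$ and $w^S_{ik} := y_{ik}$ otherwise. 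Every $w^S_{ik}$ is positive, so substituting yields a positive decomposition of $\xx$, and the $\normpip$-cost is bounded by
\begin{equation*}
\sumkN |a_k| \sum_{S \subseteq \setn} \prodin \norm{w^S_{ik}} = \sumkN |a_k| \prodin \bigpar{\norm{y_{ik}} + \norm{z_{ik}}} \le \sumkN |a_k| \prodin \bigpar{\normp{x_{ik}} + \eps},
\end{equation*}
using $\sum_S \prod_i \norm{w^S_{ik}} = \prod_i (\norm{y_{ik}} + \norm{z_{ik}})$. Letting $\eps \to 0$ and then taking the infimum over representations yields $\normpip{\xx} \le \normpiq{\xx}{\Ep}$.

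The only mild subtlety, which this argument also settles implicitly, is that $\normpip \xx < \infty$ for every $\xx \in E\nn$: since $E = \Epp - \Epp$, every elementary tensor $x_1 \tensor \dotsm \tensor x_n$ admits a positive decomposition via the multilinear expansion above, and linearity extends this to all of $E\nn$. No separate argument is needed.
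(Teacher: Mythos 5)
Your proof is correct and follows essentially the same approach as the paper: both directions use the observation $\normp{x}=\norm{x}$ for $x\ge0$, with the non-trivial inequality obtained by decomposing each $x_{ik}$ as a difference of positives and expanding the tensor product multilinearly. The only cosmetic difference is that the paper reduces to the case of a single elementary tensor (via the triangle inequality for $\normpip\,$) before doing the expansion, whereas you carry the sum over $k$ through the whole argument; the content is identical.
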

\begin{proof}
Let (temporarily) $\normpipn\,$ denote the norm in $(\Ep)_\pi\nn$.

If $x\ge0$, then $\normp{x}=\norm{x}$. Hence \eqref{normpip} implies that 
$\normpipn{\xx}\le\normpip{\xx}$.

Conversely, 
it suffices to consider $\xx=x_1\tensors x_n$ with $x_1,\dots,x_n\in E$.
Let $\eps>0$, and choose $x_{i0},x_{i1}\in E$
such that $x_i=x_{i0}-x_{i1}$ and $\norm{x_{i0}}+\norm{x_{i1}}\le\normp{x_i}+\eps$,
see \eqref{normp}.
Then,
\begin{align}
\xx=
x_1\tensors x_n
=\sum_{j_1=0}^1\dotsm \sum_{j_n=0}^1(-1)^{\sum_i j_i}x_{1j_1}\tensors x_{nj_n}
\end{align}
and thus
\begin{align}
  \normpip{\xx}&
\le \sum_{j_1=0}^1\dotsm \sum_{j_n=0}^1
\norm{x_{1j_1}}\dotsm\norm{ x_{nj_n}}
=\prodin\bigpar{\norm{ x_{i0}}+\norm{ x_{i1}}}
\notag\\&
\le\prodin\bigpar{\normp{ x_{i}}+\eps}.
\end{align}
Letting $\eps\to0$ yields
$ \normpip{\xx}\le \prodin \normp{x_i}=\normpipn{\xx}$.
\end{proof}

\begin{remark}\label{Rpi+}
\refL{Lpi+} does not extend to the symmetric tensor products and norms. For an
example, let $E=\ell_1^2$, so $\Ep=E$ by \refE{Ecp}; however, 
$\normpis{(1,-1)\nnx2}=\norm{(1,-1)}^2=4$ by \eqref{bip}, while
$\normpisp{(1,-1)\nnx2}=8$ by \eqref{psi} and \eqref{olo} below.
\end{remark}

\begin{lemma}\label{Lpi}
  \begin{thmenumerate}
  \item \label{Lpip}
For every $\xx\in E\nn$,
\begin{equation}\label{lpip}
  \normpi \xx \le \normpip \xx \le \cp(E)^n \normpi \xx.
\end{equation}
\item \label{Lpisp}
There exists a constant $\gam(n)$ (not depending on $E$) such that
for every $\xx\in E\snn$,
\begin{equation}\label{lpisp}
  \normpis \xx \le \normpisp \xx 
\le \gam(n)\normpisq{ \xx}{\Ep}
\le \gam(n)\cp(E)^n \normpis \xx.
\end{equation}
  \end{thmenumerate}
\end{lemma}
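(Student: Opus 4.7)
The first inequality in (i) is immediate from the definitions \eqref{normpi} and \eqref{normpip}, since every positive decomposition is a decomposition in the sense of \eqref{normpi}. For the second inequality, I would invoke \refL{Lpi+}: it identifies $\normpip{\xx}$ with the projective tensor norm on $(\Ep)\nnpi$. The identity map $E\to\Ep$ has operator norm exactly $\cp(E)$ by \eqref{cp}, so \refT{Tfunctor} gives that the induced map $E\nnpi\to(\Ep)\nnpi$ has norm $\cp(E)^n$, yielding $\normpip\xx\le\cp(E)^n\normpi\xx$.

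For (ii), the first inequality is again immediate from the definitions \eqref{normpis} and \eqref{normpisp}. The third inequality follows from \refT{Tfunctor} applied to the symmetric tensor power of the identity $E\to\Ep$: the induced map $E\snnpis\to(\Ep)\snnpis$ has norm $\cp(E)^n$, and since $\xx$ is the same element in both spaces, $\normpisq{\xx}{\Ep}\le\cp(E)^n\normpis\xx$. The content of the lemma is therefore the middle inequality $\normpisp\xx\le\gam(n)\normpisq{\xx}{\Ep}$. I would reduce it to a local estimate as follows: given $\eps>0$, pick a representation $\xx=\sum_k a_k y_k\nn$ with $y_k\in\Ep$ and $\sum_k|a_k|\normp{y_k}^n\le\normpisq{\xx}{\Ep}+\eps$, and for each $k$ split $y_k=u_k-v_k$ with $u_k,v_k\ge0$ and $\norm{u_k}+\norm{v_k}\le\normp{y_k}+\eps_k$ using \eqref{normp}. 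Then it suffices to produce, for arbitrary $u,v\ge0$, a decomposition of $(u-v)\nn$ into a signed combination of tensor powers of positive elements whose $\normpisp$-cost is at most $\gam(n)(\norm u+\norm v)^n$.

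The key device for this decomposition is a \emph{positive} polarization identity: for any $x_1,\dots,x_n\in E$,
\begin{equation*}
 n!\, x_1\vees x_n
 =\sum_{\emptyset\neq S\subseteq[n]}(-1)^{n-|S|}\Bigpar{\sum_{i\in S}x_i}\nn,
\end{equation*}
which one verifies by expanding the right side and matching monomials (equivalently, it is the $\xi_i=0/1$ variant of \refR{Rpolgen}). If all $x_i\ge0$, then every partial sum $\sum_{i\in S}x_i$ is positive, so the right side is a signed combination of tensor powers of positive elements. To apply this to $(u-v)\nn$, first expand $(u-v)\nn$ directly as a tensor: collecting the $2^n$ elementary tensors by the number of $v$-factors and symmetrising yields the binomial-type identity
\begin{equation*}
 (u-v)\nn=\sum_{k=0}^n(-1)^k\binom{n}{k}\, u^{\vee(n-k)}\vee v^{\vee k}.
\end{equation*}
Applying the positive polarization identity with $n-k$ copies of $u$ and $k$ copies of $v$ to each term $u^{\vee(n-k)}\vee v^{\vee k}$ produces the desired decomposition, in which each positive element $\sum_{i\in S}x_i$ has norm at most $n(\norm u+\norm v)$. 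Counting terms ($2^n$ subsets in the inner sum, $\sum_k\binom{n}{k}=2^n$ outer terms, and the factor $1/n!$) gives a crude but explicit constant such as $\gam(n)=4^n n^n/n!$, depending only on $n$.

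The main obstacle is essentially combinatorial rather than conceptual: once one recognises that the standard polarization formula \eqref{polt} uses $\pm1$ signs and therefore does not preserve positivity, one must pass to the variant above and accept an exponentially large $\gam(n)$. (A Lagrange-interpolation alternative, writing $(u-v)\nn=g(-1)$ for $g(t)=(u+tv)\nn$ with nodes $t=0,1,\dots,n$, gives the same order of growth.) For the statement of the lemma the precise value of $\gam(n)$ is irrelevant; only its independence from $E$ is used, and the construction above makes this manifest.
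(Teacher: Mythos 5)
Your proof of part (i), of the first and third inequalities in (ii), and the reduction of the middle inequality to elementary tensor powers all agree with the paper's argument. For the central estimate $\normpisp{(u-v)\nn}\le\gam(n)(\norm u+\norm v)^n$ (with $u,v\ge0$) you take a genuinely different route. The paper introduces $w(t)=(y+tz)\nn$, observes that $x\nn=w(-1)$, and constructs a finitely supported signed measure $\mu$ on $[0,\infty)$ with $\int(t+1)^j\dd\mu(t)=\delta_{j0}$ for $j\le n$ via a Vandermonde system, which is exactly a Lagrange-interpolation argument at nonnegative nodes — you correctly identify this as the ``alternative'' in your last paragraph. Your primary construction instead combines the binomial identity
$(u-v)\nn=\sum_{k}(-1)^k\binom nk u^{\vee(n-k)}\vee v^{\vee k}$
with the inclusion--exclusion polarization formula
$n!\,x_1\vees x_n=\sum_{\emptyset\neq S\subseteq[n]}(-1)^{n-|S|}\bigl(\sum_{i\in S}x_i\bigr)\nn$,
whose virtue is precisely that it uses only coefficients $0,1$ in the inner sums and so preserves positivity of the $x_i$ (unlike the $\pm1$ polarization \eqref{polt}). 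This is correct: since each $\sum_{i\in S}x_i$ is a nonnegative combination of $u$ and $v$, it lies in $\Epp$ with norm at most $n(\norm u+\norm v)$, and the term count gives the explicit bound $\gam(n)\le 4^nn^n/n!$, uniformly in $E$. Both constructions are far from the sharp value $\gam(n)=2^{n-1}$ obtained later in \refT{Tgamma}; what your approach buys is a fully explicit, combinatorial constant with no Vandermonde/existence step, while the paper's has the advantage of degrees of freedom (choice of nodes) that could in principle be optimized. The reduction from general $\xx\in E\snn$ via a representation $\sum_k a_k y_k\nn$ and decomposing each $y_k=u_k-v_k$ is handled correctly and matches the paper's implicit ``it suffices to consider a tensor power'' step.
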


\begin{proof}
  \pfitemref{Lpip}
The first inequality in \eqref{lpip} is trivial. 
Since the identity map $I:E\to\Ep$ has norm $\cp(E)$, 
the identity map $I\nn:E\nnpi\to(\Ep)\nnpi$ has norm $\cp(E)^n$, see
\refT{Tfunctor}, which yields the second inequality
by \refL{Lpi+}.

 \pfitemref{Lpisp}
Again, the first inequality is trivial. 
Furthermore, the argument just given for \ref{Lpip} shows also that
$I\snn:E\snnpis\to(\Ep)\snnpis$ has norm $\cp(E)^n$, 
which yields the third inequality in \eqref{lpisp}.

For the second inequality, 
by \eqref{normpis},
it suffices to consider a  tensor power
$\xx=x\nn$. Decompose $x=y-z$ with $y,z\ge0$.
Define, for $t\in\bbR$, the tensor $w(t)\in E\snn$ by
\begin{equation}\label{wt}
  w(t):=(y+tz)\nn=\bigpar{x+(1+t)z}\nn
=\sum_{i=0}^n \binom ni (t+1)^{n-i} x^{\vee i} \vee z^{\vee (n-i)},
\end{equation}
where we have used the binomial theorem in the commutative tensor algebra
$\bigcup_{n\ge0} E\snn$.
Note that $x\nn = w(-1)$, and that for $t\ge0$ we have $y+tz\ge0$ and thus
\begin{equation}
  \normpisp{w(t)} \le \norm{y+tz}^n\le (\norm{y}+t\norm{z})^n.
\end{equation}

Now suppose that $\mu$ is a finite signed measure on $\ooo$ such that
\begin{equation}\label{mu}
  \intoo (t+1)^j\dd\mu(t) =
  \begin{cases}
	1, & j=0,\\
0, & j=1,\dots,n.
  \end{cases}
\end{equation}
Then \eqref{wt} yields
\begin{equation}\label{intw}
  \intoo w(t)\dd\mu(t) = x\snn = x\nn.
\end{equation}
Suppose further that $\mu$ is supported at a finite number of points, \ie,
$\mu$ is a linear combination of Dirac measures $\sum_k \gl_k\gd_{t_k}$.
Then the integral in \eqref{intw} is a linear combination 
$\sum_k \gl_k w(t_k) = \sum_k \gl_k (y+t_kz)\nn$ and thus \eqref{normpisp}
yields
\begin{equation}
  \begin{split}
  \normpisp{x\nn}&
 \le \sum_k |\gl_k|\norm{y+t_kz}^n
\le \sum_k |\gl_k|(\norm{y}+t_k\norm{z})^n
\\&
\le \sum_k |\gl_k|\max(1,t_k)^n(\norm{y}+\norm{z})^n.	
  \end{split}
\end{equation}
Taking the infimum over all decompositions $x=y-z$ we obtain
\begin{equation}
  \begin{split}
  \normpisp{x\nn}
\le \sum_k |\gl_k|\max(1,t_k)^n\normp{x}^n
  \end{split}
\end{equation}
This implies the second inequality in \eqref{lpisp} with
\begin{equation}\label{mub}
  \gam(n)= \sum_k |\gl_k|\max(1,t_k)^n.  
\end{equation}

It remains to show that such a $\mu$ exists.
For this we choose $t_1<\dots<t_{n+1}$ arbitrarily in $\ooo$.
The equations \eqref{mu} become the system of linear equations
\begin{equation}\label{mua}
  \sum_{k=1}^{n+1} \gl_k (t_k+1)^j = 
  \begin{cases}
	1,&j=0,\\
0,&j=1,\dots,n.
  \end{cases}
\end{equation}
The coefficient matrix is the Vandermonde matrix with entries $(t_k+1)^j$,
$k=1,\dots,n+1$ and $j=0,\dots,n$; this matrix is non-singular and thus
\eqref{mua} has a solution.
\end{proof}

The decompositions used in the proof above are in general not optimal.
Optimal decompositions may be much harder to find; 
two non-trivial examples are given in \eqref{chit} and \eqref{chiq} with
\eqref{chiqq}.

From now on, we let $\gam(n)$ denote  the smallest possible constant such
that \eqref{lpisp} holds for all $E$ and all $\xx\in E\snn$.
We will show that $\gamma(n)=2^{n-1}$ in \refT{Tgamma}, but until this is
proved, we regard $\gamma(n)$ as an unknown constant.

\refL{Lpi} shows that for any normed space $E$, 
$\normpi\,$ and $\normpip\,$ are equivalent norms on $E\nn$,
and
$\normpis\,$ and $\normpisp\,$ are equivalent norms on $E\snn$.
We use $E\nnpip$, $E\snnpip$ and $E\snnpisp$ to denote 
$E\nn$ with the norm $\normpip\,$ and
$E\snn$ with the norms $\normpip\,$ and $\normpisp\,$, respectively.

\begin{remark}
  \label{Rbup}
In analogy with \eqref{bop} and \eqref{bip}, it follows that for 
a positive elementary tensor product $\xx=\xxtensorn$ with $x_1,\dots,x_n\ge0$,
\begin{equation}\label{bupp}
  \normpip{\xxtensorn}=
  \normpi{\xxtensorn}=
\norm{x_1}\dotsm\norm{x_m},
\end{equation}
and for a positive elementary tensor power
$\xx=x\nn$ with $x\ge0$,
\begin{equation}\label{bup}
  \normpisp{x\nn}=
  \normpis{x\nn}=
  \normpip{x\nn}=
  \normpi{x\nn}=\norm{x}^n.
\end{equation}
The norms $\normpip\,$ and $\normpisp\,$
are the largest norms on $E\nn$ and $E\snn$, respectively, that satisfy
\eqref{bupp} and \eqref{bup}.

However, note that (for $n\ge2$), \eqref{bupp} and
\eqref{bup} in general are false for
general $x\in E$; hence $\normpisp\,$ are not  tensor norms in the usual
sense. 
In fact, by \refL{Lpi+} and \eqref{bop} applied to $\Ep$,
\begin{align}\label{bup+}
  \normpip{\xxtensorn}=\prodin \normp{x_i},
\qquad x_1,\dots,x_n\in E.
\end{align}
Another counterexample for \eqref{bup}
is given by the same example $E=\ell_1^2$
and $\normpisp{(1,-1)\nnx2}=8$ as in \refR{Rpi+}, 
given by  \eqref{psi} and \eqref{olo} below; see also \eqref{csspn2}.
\end{remark}

\begin{remark}
  \label{R=+}
Let $B^+(E):=B(E)\cap \Epp$, the positive part of the unit ball.
In analogy with \refR{R=},
the unit balls $B(E\snnpip)$  and $B(E\snnpisp)$ 
equal the closed convex hull of
the sets
$\set{\pm\xxveen:x_1,\dots,x_n\in B^+(E)}$
and $\set{\pm x\snn:x\in B^+(E)}$, respectively.
Again, 
if $\dim(E)<\infty$, 
these equal the convex hulls (which already are closed);
hence,
the infima in \eqref{normpi2} and \eqref{normpis} are attained when
$\dim(E)<\infty$.
\end{remark}

\begin{remark}
  \label{Rxsp}
Similarly,
in analogy with \eqref{rix}--\eqref{ris},
it follows from \eqref{normpip2} and \eqref{normpisp} that
for any linear operator $T:E\snnpi\to F$,
where $F$ is a normed space,
\begin{align}
\norm{T}_{E\snnpip,F}&=\sup\bigset{\norm{T(\xxveen)}:x_1,\dots,x_n\in B^+(E)}.
\label{prix}
  \intertext{and}
\norm{T}_{E\snnpisp,F}&=\sup\bigset{\norm{T(x\nn)}:x\in B^+(E)}.
\label{pris}
\end{align}
Conversely, these properties characterize the norms $\normpip\,$ and
$\normpisp\,$ on $E\snn$.
\end{remark}

\begin{remark}\label{Rlattice}
Even if $E$ is a Banach lattice, $\normpip\,$ and $\normpisp\,$ are in
general not lattice norms, \ie, in general $|\xx|\le|\yy|$ does not imply
$\norm{\xx}\le\norm{\yy}$.
For example, 
consider (\cf{} \refRs{Rpi+} and \ref{Rbup})
$E=\ell_1^2$ and let $\xx=(1,-1)\nnx2=
\smatrixx{
 \phantom{-} 1 & -1 \\
-1 & \phantom{-}1}
\in E\nnx2$ and 
$\yy=|\xx|=\smatrixx{1&1\\1&1}=(1,1)\nnx2$.
Then, $|\xx|=\yy$ but, see \eqref{psi}, \eqref{olo} and \eqref{bup},
$\normpisp{\xx}=8$ and $\normpisp{\yy}=4$.

  For Banach lattices $E$ and $F$, \citet{Fremlin} defined a positive
  projective tensor norm $\normabspi\,$ on $E\tensor F$ such that the
  completion is a Banach lattice.
In particular, for a Banach lattice $E$, $\normabspi\,$ is defined on
$E\nn$, and there is also a symmetric version $\normabspis\,$ on $E\snn$,
inroduced by \citet{BuBuskes}.
It is easily seen that
\begin{align}
  \normabspi{\xx}&=\inf\bigset{\normpip{\yy}:\yy\ge|\xx|},
\\
  \normabspis{\xx}&=\inf\bigset{\normpisp{\yy}:\yy\ge|\xx|}.
\end{align}
\end{remark}

\begin{remark}\label{Rrank}
A related notion of \emph{non-negative rank} of a 
non-negative tensor $\xx$, 
meaning the smallest  $N$  in a decomposition \eqref{normpip} with
$a_k\ge0$,
has been studied by several authors,
see \eg{} \citet{QiEtal1,QiEtal2} and the references there.
Note, however, that we consider arbitrary $\xx$ above, and do not require
$a_k\ge0$.
\end{remark}

\subsection{Positive  polarization constants}

In analogy with \refC{CT},
we define $\csp(n,E)$, $\cssp(n,E)$,  and $\cpsp(n,E)$ 
as the norms of the identity map 
$E\snnpi\to E\snnpisp$, $E\snnpis\to E\snnpisp$, 
and $E\snnpip\to E\snnpisp$, respectively,
\ie,
\begin{align}
 \csp(n,E)&:=\sup_{\xx\in E\snn} \frac{\normpisp \xx}{\normpi \xx},\label{cspn}
\\
\cssp(n,E)&:=\sup_{\xx\in E\snn} \frac{\normpisp \xx}{\normpis \xx},\label{csspn}
\\
\cpsp(n,E)&:=  \sup_{\xx\in E\snn}\frac{\normpisp{\xx}}{\normpip{\xx}}.\label{cpspn}
\end{align}

By  \eqref{rix}, \eqref{ris} and \eqref{normpip2}, it suffices to
consider elementary tensors $\xx=x_1\vees x_n$ 
in \eqref{cspn} and \eqref{cpspn}
and $\xx=x\nn$ in \eqref{csspn}, \ie,
\begin{align}
  \csp(n,E)&=\sup_{x_1,\dots,x_n\in E} 
  \frac{\normpisp{x_1\vees x_n}}{\norm{x_1}\dotsm\norm{x_n}}
,\label{cspn2}
\\
  \cssp(n,E)&=\sup_{x\in E} \frac{\normpisp {x\nn}}{\norm{x}^n}. \label{csspn2}
\\
  \cpsp(n,E)&=\sup_{x_1,\dots,x_n\in \Epp} 
  \frac{\normpisp{x_1\vees x_n}}{\norm{x_1}\dotsm\norm{x_n}}
\end{align}
Since $\norm{x}=\normp{x}$ when $x\ge0$, it follows from
\eqref{normpisp} that 
\begin{align}
  \label{normpisp+}
\normpispq\xx{E}=\normpispq\xx{\Ep}, 
\end{align}
and thus \refL{Lpi+} 
implies
\begin{align}
\cpsp(n,E)
=\csp(n,\Ep).
\end{align}
We will therefore usually ignore $\cpsp$, and leave it to the reader.

We may also consider
the identity maps $E\nnpi\to E\nnpip$
and $E\snnpi\to E\snnpip$, but we then do not need any new notation 
since it was shown in the proof of \refL{Lpi} that
both have norm $\cp(E)^n$, \ie,
\begin{align}\label{cpip}
  \sup_{\xx\in E\nn}\frac{\normpip{\xx}}{\normpi{\xx}}
=
  \sup_{\xx\in E\snn}\frac{\normpip{\xx}}{\normpi{\xx}}
=\cp(E)^n.
\end{align}

Note also that the inverses of all identity maps considered here have norm 1.
Thus, or directly from the definitions, 
$\csp(n,E)\ge1$,
$\cssp(n,E)\ge1$,
$\cpsp(n,E)\ge1$,
and $\cp(E)\ge1$.

Several inequalities between the different polarization constants
follow directly from the definitions and \refC{CT},
by considering compositions of the identity maps. For example,
\begin{equation}\label{abc}
\max\bigpar{\cs(n,E),
\cssp(n,E)}\le
  \csp(n,E)\le \cs(n,E)\cssp(n,E).
\end{equation}
Similarly,
by \eqref{csspn} and \eqref{lpisp},
\begin{equation}\label{csspgam}
1\le  \cssp(n,E)\le \gam(n)\cp(E)^n.
\end{equation}
Moreover, using \eqref{normpisp+},
$\gam(n)$ is the smallest constant such that 
$\normpispq\xx{\Ep} 
\le \gam(n) \normpisq\xx{\Ep}$
for all normed spaces $E$ and all $\xx\in E\snn$, i.e.,
\begin{equation}\label{gamoa}
  \gam(n)=\sup_E{\cssp(n,\Ep)}.
\end{equation}
Using \eqref{csspgam}, we thus also have
\begin{equation}\label{gamob}
  \gam(n)=\sup_E\frac{\cssp(n,E)}{\cp(E)^n}.
\end{equation}

\begin{example}\label{Edeka}
  For $n=1$, $E^{\vee1}=E^{\tensor1}=E$. 
It is obvious that 
the norms $\normpi x=\normpis x =\norm x$
for any $x\in E$; furthermore, see \eqref{normpisp} and \eqref{normp},
$\normpisp x=\normp x$. 
In particular, by \eqref{cspn}--\eqref{csspn} and \eqref{cp},
\begin{equation}\label{edeka}
\csp(1,E)=  \cssp(1,E)=\cp(E).
\end{equation}
Thus $\gam(1)=1$.
\end{example}

We note also that the definitions \eqref{normpip}--\eqref{normpisp}
and \eqref{bup+} 
imply
$
\normpisp{x\nn}
\ge
\normpip{x\nn}
=\normp{x}^n.
$
Thus,
using \eqref{abc},
\eqref{csspn2},
and \eqref{cp},
\begin{align}\label{lisken}
\csp(n,E)\ge
  \cssp(n,E)\ge \cp(E)^n.
\end{align}

\begin{example}
  \label{EH}
If $H$ is a Hilbert space $H$,
then \eqref{abc} and  \refE{EBanach}
yield
\begin{align}  \label{abcH}
  \cssp(n,H)=\csp(n,H).
\end{align}

We will see in \refEs{El2+} and \ref{El2g} that 
the result by \citet{Banach} in
\refE{EBanach} does not extend to the positive tensor norms, \ie, in general
$\normpipq{\xx}{H}\neq \normpispq{\xx}{H}$, even when $H$ is $\ell_2^2$ with
the usual ordering.

Furthermore, \refE{El2g} also shows that for this example
$\csp(2,\ell_2^2)=
  \cssp(2,\ell_2^2)=3$,
and thus the second inequality in \eqref{lisken} is strict; recall that
$\cp(\ell_2^2)^2=2$ by \refE{Ecp}.

\end{example}

\subsection{Multilinear forms on ordered spaces}\label{SSmulti+}
If $E$ is an ordered normed space, define for an $n$-linear form
$L\in\fL(\nek)$, in analogy with \eqref{normL} and \eqref{normLs},
\begin{align}
  \normp{L}&:=\sup\bigset{|L(x_1,\dots,x_n)|:
     \norm{x_1}=\dots=\norm{x_n}\le1,\, x_1,\dots,x_n\ge0 }.
\label{normL+}
\\
\normqp{L}&:=\normp{\hL}=\sup\bigset{|L(x,\dots,x)|:\norm x\le1,\, x\ge0}.
\label{normLs+}
\end{align}
Then $\normp{L}$ equals the norm $\normpip{L}^*$ in the dual of $E\nnpip$.
If $L$ is symmetric, then also
$\normp{L}$ equals the norm $\normpip{L}^*$ in the dual of $E\snnpip$,
and
$\normqp{L}$ equals the norm $\normpisp{L}^*$ in the dual of $E\snnpisp$.

By duality, $\csp(n,E)$ and $\cssp(n,E)$ equal the norms of the identity
operators $(E\snnpisp)^*\to(E\snnpi)^*$ and
$(E\snnpisp)^*\to(E\snnpis)^*$, respectively.
Hence, using \eqref{lt}--\eqref{lts},
\begin{align}
  \csp(n,E)&=\sup_{L\in\cLs(\nek)}\frac{\norm L}{\normqp L},
\\
  \cssp(n,E)&=\sup_{L\in\cLs(\nek)}\frac{\normq L}{\normqp L}.
\end{align}

\subsection{Functorial properties}\label{SSfunctor+}

We have functorial properties similar to the ones in \refT{Tfunctor}, but
now only for positive operators.

\begin{theorem}\label{Tfunctor+}
 If $E$ and $F$ are ordered normed spaces
and $T:E\to F$ is a positive bounded linear operator, then
$T\nn:E\nnpip\to F\nnpip$,
$T\snn:E\snnpip\to F\snnpip$
and $T\snn:E\snnpisp\to F\snnpisp$ all have norm $\norm{T}^n$.  
\end{theorem}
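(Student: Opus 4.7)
The plan is the exact positive analog of the one-line proof of \refT{Tfunctor}. That argument combined the characterizations \eqref{rx}, \eqref{rix}, \eqref{ris} of the three operator norms with the size formulas \eqref{bop} and \eqref{bip}; I would instead combine \eqref{prix} and \eqref{pris} from \refR{Rxsp} (and the analog of \eqref{rx} for $E\nnpip$, which is immediate from \eqref{normpip}) with the identities \eqref{bupp} and \eqref{bup} for norms of positive elementary tensor products and tensor powers.

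For the upper bound, for each of the three operators the calculation is a single line. If $x_1,\dots,x_n\in B^+(E)$ then $Tx_i\in F_+$ by positivity of $T$, so \eqref{bupp} gives $\normpip{T\nn(\xxtensorn)}_F=\prodin\norm{Tx_i}_F\le\norm{T}^n$, which together with the positive-cone version of \eqref{rx} yields $\norm{T\nn}\le\norm{T}^n$ in the first case. For $T\snn\colon E\snnpip\to F\snnpip$ I would use \eqref{prix}, bounding $\normpip{Tx_1\vees Tx_n}_F$ by the one-term decomposition $\prodin\norm{Tx_i}_F\le\norm{T}^n$ via \eqref{normpip2} and \eqref{bupp}. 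For $T\snn\colon E\snnpisp\to F\snnpisp$, \eqref{pris} reduces the question to $T\snn(x\nn)=(Tx)\nn$ with $Tx\ge0$, for which \eqref{bup} gives norm $\norm{Tx}_F^n\le\norm{T}^n$.

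For the matching lower bound, I would test each operator on a pure power $x\nn$ with $x\in B^+(E)$. By \eqref{bup} applied in $E$, $x\nn$ has norm $\norm{x}_E^n\le1$ in each of $E\nnpip$, $E\snnpip$ and $E\snnpisp$; since $T\nn(x\nn)=T\snn(x\nn)=(Tx)\nn$ with $Tx\ge0$, \eqref{bup} applied in $F$ gives the corresponding image norm $\norm{Tx}_F^n$ in each of the three codomains. Hence each of the three operator norms is at least $\sup_{x\in B^+(E)}\norm{Tx}_F^n$, which equals $\norm{T}^n$ once $\norm{T}$ is read as the natural positive operator norm $\sup\{\norm{Tx}_F:x\ge0,\,\norm{x}_E\le1\}$, equivalently the norm of $T\colon\Ep\to F$.

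The one genuinely subtle point is this last identification: the three tensor operator norms depend on $T$ only through its action on $\Epp$, so the operator norm that naturally appears on the right is the positive-cone supremum above. In the Banach-lattice setting of \refE{Ecp2} this coincides with the usual $\norm{T}_{E\to F}$ because $|Tx|\le T|x|$ and $\norm{|x|}=\norm{x}$, which is the regime in which the theorem is a seamless positive analog of \refT{Tfunctor}.
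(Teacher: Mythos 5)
Your argument is correct and is essentially the paper's proof: the paper compresses it to the single line ``an immediate consequence of the definitions together with \eqref{bup}'', and that line means exactly your calculation --- evaluate each operator norm via the positive-cone analogues of \eqref{rx}, \eqref{rix}, \eqref{ris} on elementary tensors and powers of positive elements, then invoke \eqref{bupp}/\eqref{bup}. The ``subtle point'' you flag at the end, however, is more than a side remark; it is a genuine imprecision in the theorem as stated. All three tensor norms see $T$ only through $T|_{\Epp}$, and your computation (once you tighten the upper bound by noting that the $x_i$ already range over $B^+(E)$, so $\prod_i\|Tx_i\|_F\le(\sup_{B^+(E)}\|Tx\|_F)^n$) shows that each of the three operator norms equals exactly $\bigl(\sup_{x\in B^+(E)}\|Tx\|_F\bigr)^{n}=\|T\colon\Ep\to F\|^{n}$. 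This can be strictly smaller than $\|T\colon E\to F\|^{n}$: take $E=\mathbb R^2$ with the first-quadrant cone and the norm $\|(x,y)\|=|x-y|+C|x+y|$ of \refE{Ecp3}, $F=\mathbb R$, and the positive functional $T(x,y)=x$; then $\|T\|=\tfrac12$ while $\sup_{x\in B^+(E)}\|Tx\|=\tfrac1{1+C}$. So the theorem is exact with $\|T\|$ read as $\|T\colon\Ep\to F\|$, and as printed it tacitly assumes $\sup_{B^+(E)}\|Tx\|=\|T\|$ --- true in Banach lattices, and true whenever the domain is $\ell_1^m$, which is the only situation in which the paper actually invokes \refT{Tfunctor+} (\refTs{Tcsp}, \ref{Tcssp}, \ref{Tex}, \ref{Texex}).
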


\begin{proof}
  An immediate consequence of the definitions 
\eqref{normpip} and \eqref{normpisp}
together with \eqref{bup}.
\end{proof}

It follows that there is a version of \refT{Tfunc} for $\csp(n,E)$ and
$\cssp(n,E)$, but more restrictive; for example, the analogue of  (ii) holds
provided we assume that the injection $F\to E$ and the projection $P$ are
positive operators; similarly, the analogue of (iii) holds if we define an
``ordered Banach--Mazur distance'' between isomorphic ordered normed spaces by
considering only order isomorpisms $T:E\to F$.

\section{$\ell_1$ is extreme}\label{Sell1}

We have seen in \refE{El1} that $\ell_1$ and $\ell_1^n$ are extremal among
all normed spaces 
for $\cs(n,E)$. 
The next two theorems show that they are extremal also for $\csp$ and
$\cssp$, provided we compensate for $\cp(E)$; recall that \refE{Ecp3} shows
that $\cp(E)$ may be arbitrarily large, so \eqref{edeka} 
and \eqref{lisken} show that
$\sup_E \csp(n,E)=  \sup_E\cssp(n,E)=\infty$ for any $n\ge1$.

\begin{theorem}\label{Tcsp}
If\/ $n\le m\le \infty$, then
$\csp(n,\ell_1^m)=\kkn$, where
\begin{align}\label{tcsp1}
 \kkn:=
\normpispq{e_1\vees e_n}{\ell_1}=\normpispq{e_1\vees e_n}{\ell_1^n}.
\end{align}
Furthermore,
\begin{align}\label{tcsp2}
\sup_E\frac{\csp(n,E)}{\cp(E)^n}
=
\sup_{E:\,\cp(E)=1}\csp(n,E)
=\csp(n,\ell_1)
=\csp(n,\ell_1^n)
=\kkn
.
\end{align}
\end{theorem}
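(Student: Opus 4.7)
The plan is to prove both assertions by reducing to $\ell_1^n$ via positive linear operators, making essential use of the functorial property \refT{Tfunctor+}. First I would note that $\normpispq{e_1\vees e_n}{\ell_1^m}=\normpispq{e_1\vees e_n}{\ell_1^n}$ for every $n\le m\le\infty$: the inclusion $\ell_1^n\hookrightarrow\ell_1^m$ and the projection onto the first $n$ coordinates are positive operators of norm $1$ whose composition is the identity on $\ell_1^n$, so by \refT{Tfunctor+} their $n$-fold symmetric tensor powers preserve $\normpisp\,$ on the image of $e_1\vees e_n$. The same argument works for $\ell_1$.

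For $\csp(n,\ell_1^m)=\kkn$, the lower bound is immediate by taking $x_i=e_i$ in \eqref{cspn2}. For the upper bound, expanding $x_i=\sum_j x_{ij}e_j$ by multilinearity gives
\[
x_1\vees x_n = \sum_{j_1,\dots,j_n} x_{1,j_1}\dotsm x_{n,j_n}\,e_{j_1}\vees e_{j_n}.
\]
The key observation is that $\normpispq{e_{j_1}\vees e_{j_n}}{\ell_1^m}\le\kkn$ for every choice of $(j_1,\dots,j_n)$, even with repeated entries: the positive operator $S\colon\ell_1^n\to\ell_1^m$ sending $e_i\mapsto e_{j_i}$ has $\norm{S}\le 1$ (since $\norm{\sum c_i e_{j_i}}_1\le\sum|c_i|$) and satisfies $S^{\vee n}(e_1\vees e_n)=e_{j_1}\vees e_{j_n}$, so \refT{Tfunctor+} supplies the bound. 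The triangle inequality together with $\prod_i\sum_j|x_{ij}|=\prod_i\norm{x_i}_1\le 1$ then completes the argument.

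For the chain \eqref{tcsp2}, the inequalities
\[
\sup_E\csp(n,E)/\cp(E)^n \ge \sup_{E:\,\cp(E)=1}\csp(n,E) \ge \csp(n,\ell_1)=\kkn
\]
are immediate from the first part together with $\cp(\ell_1)=1$ (\refE{Ecp}). The main step is the matching upper bound $\csp(n,E)\le \kkn\cp(E)^n$ for an arbitrary ordered normed space $E$. Given $x_i\in B(E)$ and $\eps>0$, take decompositions $x_i=y_i-z_i$ with $y_i,z_i\ge 0$ and $\norm{y_i}+\norm{z_i}\le\cp(E)+\eps$, and define $T\colon\ell_1^{2n}\to E$ by $T(e_{2i-1})=y_i/\norm{y_i}$ and $T(e_{2i})=z_i/\norm{z_i}$, with the obvious modification when a norm vanishes. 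Setting $\tilde x_i:=\norm{y_i}e_{2i-1}-\norm{z_i}e_{2i}$ yields $T(\tilde x_i)=x_i$ and $\norm{\tilde x_i}_1\le\cp(E)+\eps$; since each $T(e_k)$ is positive of unit norm, $T$ is positive with $\norm{T}\le 1$. By \refT{Tfunctor+} and the first part applied with $m=2n$,
\[
\normpisp{x_1\vees x_n}_E\le\normpisp{\tilde x_1\vees\tilde x_n}_{\ell_1^{2n}}\le\kkn(\cp(E)+\eps)^n,
\]
and letting $\eps\to 0$ finishes the argument. I expect the main obstacle to be arranging this positive lift so that $\norm{T}\le 1$: the naive map sending basis vectors directly to $y_i,z_i$ picks up an extraneous factor of roughly $2^n$, so it is essential to normalize each $T(e_k)$ to unit norm and absorb the sizes of $y_i,z_i$ into the coefficients of $\tilde x_i$.
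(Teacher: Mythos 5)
Your argument is correct and reaches the conclusion, but the route differs genuinely from the paper's. For the upper bound, the paper never expands a tensor in coordinates: it takes arbitrary \emph{positive} unit vectors $x_1,\dots,x_n\in E$, defines $T\colon\ell_1^n\to E$ by $Te_i:=x_i$, and reads off $\normpispq{\xxveen}{E}\le\kkn$ from \refT{Tfunctor+} in one step; hence $\normpisp{\xx}\le\kkn\normpip{\xx}$ for all $\xx\in E\snn$, which combined with the already-available $\normpip{\xx}\le\cp(E)^n\normpi{\xx}$ from \refL{Lpi}\ref{Lpip} gives $\csp(n,E)\le\kkn\cp(E)^n$ for every ordered normed $E$ simultaneously, and the $\ell_1^m$ case drops out as a special case. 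Your coordinate expansion in $\ell_1^m$ is fine for finite $m$, but for $m=\infty$ it is an infinite series in $\ell_1\snn$, which formally lives only in the completion; you would need a truncation/limiting argument to justify it, whereas the paper's proof sidesteps this entirely. Your $\ell_1^{2n}$ factorization for general $E$ (normalizing each $T(e_k)$ to a unit vector and pushing $\norm{y_i},\norm{z_i}$ into the coefficients of $\tilde x_i$) is essentially the device the paper uses for \refT{Tcssp}, transplanted here. It works, but at the cost of re-deriving from scratch what the paper gets for free from $\eqref{lpip}$; in exchange it has the pedagogical virtue of keeping the first part self-contained inside the $\ell_1$ world. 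Both proofs rest entirely on the functorial property \refT{Tfunctor+}, and the lower bounds are the same (take $x_i=e_i$ and use $\normpiq{e_1\vees e_n}{\ell_1^m}=1$).
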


\begin{proof}
First, note that the natural injection $\ell_1^n\to\ell_1$ and projection
$\ell_1\to\ell_1^n$ have norm 1, and that this implies the equality of the
two tensor norms in \eqref{tcsp1} by \refT{Tfunctor+}.

Let $x_1,\dots,x_n\in E$ with $x_i\ge0$ and $\norm{x_i}=1$.
Define a linear map $T:\ell_1^n\to E$ by $T e_i:=x_i$. Then $T$ is positive
and $\norm{T}=1$, and thus, by \refT{Tfunctor+},
$T\snn:(\ell_1^n)\snnpisp\to E\snnpisp$ has norm 1. Hence,
\begin{align}
  \normpispq{\xxveen} {E}&
=\normpispq{T\snn (e_1\vees e_n)}{E}
\notag\\&
\le\normpispq{e_1\vees e_n}{\ell_1^n}
=\kkn.
\end{align}
It now follows from \eqref{normpip2} that for any $\xx\in E\snn$, 
\begin{align}\label{eva}
  \normpisp{\xx}
\le\kkn \normpip{\xx}.
\end{align}
Combining \eqref{eva} and \eqref{lpip} yields
$  \normpisp{\xx}
\le\kkn \normpip{\xx}
\le\kkn \cp(E)^n\normpi{\xx}
$  
and thus
\begin{align}\label{bryngel}
  \csp(n,E)\le\kkn\cp(E)^n.
\end{align}
It follows immediately from \eqref{bryngel} and $\cp(\ell_1^m)=1$ that 
$\csp(n,\ell_1^m)$ and
all terms in \eqref{tcsp2} are at most $\kkn$.

Conversely, if $n\le m\le\infty$, then, 
using the injection $\ell_1^m\to\ell_1$,
\begin{align}\label{ellika}
  \kkn
&=
\normpispq{e_1\vees e_n}{\ell_1}
\le
\normpispq{e_1\vees e_n}{\ell_1^m}
\notag\\&
\le
\csp\xpar{\ell_1^m}\normpiq{e_1\vees e_n}{\ell_1^m}
=\csp\xpar{\ell_1^m}.
\end{align}
Hence $\csp\xpar{\ell_1^m}=\kkn$.
Furthermore, \eqref{ellika} implies that
each term in \eqref{tcsp2} is at least $\kkn$, so equalities holds.
\end{proof}

\begin{example}
  \label{Ekk2}
We show that $\kk(2)=3$.
This can be shown using the general results 
\eqref{uv} and \eqref{mumrik1} 
in \refR{Rbetter}
and
\refSS{SSbinary},
but we give a direct proof.

For an upper bound, we use the decomposition 
\begin{align}
  e_1\vee e_2 = 2\xpar{\tfrac12 e_1+\tfrac12 e_2}\nnx2
-\tfrac12 e_1\nnx2-\tfrac12 e_2\nnx2.
\end{align}

For a lower bound, we consider the linear map $L:(\ell_1^2)\snnx2\to\bbR$
given by 
$e_1\dual\tensor e_1\dual+e_2\dual\tensor e_2\dual-6e_1\dual\vee e_2\dual$,
\ie,
$\smatrixx{a&b\\b&c}\mapsto a+c-6b$. A positive unit vector in $\ell_1^2$ is
$(x,1-x)$ for some $x\in\oi$, and
\begin{align}
  L\bigpar{(x,1-x)\nnx2}=x^2+(1-x)^2-6x(1-x)=1-8x(1-x).
\end{align}
Since $0\le x(1-x)\le\frac{1}{4}$, 
$|L\bigpar{(x,1-x)\nnx2}|\le 1$, and thus, by \eqref{pris},
$\normpispx{L}\le1$. Furthermore, $L(e_1\vee e_2)=-3$. Hence
$\normpisp{e_1\vee e_2}\ge3$.

Consequently,
\begin{align}\label{ekk2}
  \kk(2)=\normpispq{e_1\vee e_2}{\ell_1}=3.
\end{align}
\end{example}

We study the constant $\kkn$ further in \refS{Sexch}, where it plays an
important role.

\begin{theorem}\label{Tcssp}
  If\/ $2\le m\le \infty$, then
$\cssp(n,\ell_1^m)=\gam(n)$.
Thus,
\begin{align}\label{tcssp}
\sup_E\frac{\cssp(n,E)}{\cp(E)^n}
=
\sup_{E:\,\cp(E)=1}\cssp(n,E)
=\cssp(n,\ell_1)
=\cssp(n,\ell_1^2)
=\gam(n).
\end{align}
\end{theorem}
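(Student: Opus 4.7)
\emph{Plan.} The proof proceeds by matching upper and lower bounds on $\cssp(n,\ell_1^m)$; once $\cssp(n,\ell_1^m)=\gamma(n)$ is established, the chain \eqref{tcssp} follows from general facts. The upper bound $\cssp(n,\ell_1^m)\le\gamma(n)$ is immediate from \eqref{csspgam} together with $\cp(\ell_1^m)=1$ (\refE{Ecp}). All the real work lies in the lower bound $\cssp(n,\ell_1^2)\ge\gamma(n)$.

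\emph{Reduction to $\ell_1^2$.} By \eqref{gamoa}, it suffices to prove $\cssp(n,\Ep)\le\cssp(n,\ell_1^2)$ for every ordered normed space $E$. Fix such $E$, an $x\in E$, and $\eps>0$, and choose a decomposition $x=y-z$ with $y,z\ge0$ and $\norm y+\norm z\le\normp x+\eps$ (if $y=0$ or $z=0$, replace the vanishing summand by an arbitrary positive unit vector, later assigned coefficient $0$; this is harmless). Define $T\colon\ell_1^2\to E$ by $Te_1:=y/\norm y$ and $Te_2:=z/\norm z$. Then $T$ is a positive operator of norm exactly $1$: for any $(a,b)\in\ell_1^2$, the triangle inequality gives $\norm{T(a,b)}\le\abs a+\abs b=\norm{(a,b)}_{\ell_1^2}$, while $\norm{Te_1}=1$. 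Setting $x':=\norm y\,e_1-\norm z\,e_2\in\ell_1^2$, we have $Tx'=y-z=x$ and $\norm{x'}_{\ell_1^2}=\norm y+\norm z\le\normp x+\eps$.

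\emph{Functoriality and sup over $E$.} By \refT{Tfunctor+}, the induced map $T\snn\colon(\ell_1^2)\snnpisp\to E\snnpisp$ has norm $\norm T^n=1$, whence
\begin{align*}
\normpispq{x\nn}{E}
&=\normpispq{T\snn((x')\nn)}{E}
\le\normpispq{(x')\nn}{\ell_1^2}\\
&\le\cssp(n,\ell_1^2)\,\norm{x'}_{\ell_1^2}^n
\le\cssp(n,\ell_1^2)(\normp x+\eps)^n.
\end{align*}
Letting $\eps\to 0$ and dividing by $\normp x^n$ gives $\normpispq{x\nn}{E}/\normp x^n\le\cssp(n,\ell_1^2)$. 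Taking the supremum over $x\in E$, and using \eqref{csspn2} applied to the ordered normed space $\Ep$ together with the identity $\normpispq{\cdot}{\Ep}=\normpispq{\cdot}{E}$ from \eqref{normpisp+}, yields $\cssp(n,\Ep)\le\cssp(n,\ell_1^2)$. Finally taking the supremum over $E$ and invoking \eqref{gamoa}, $\gamma(n)\le\cssp(n,\ell_1^2)$, matching the upper bound.

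\emph{Extension and obstacle.} For $m\ge 2$, the canonical inclusion $\ell_1^2\hookrightarrow\ell_1^m$ and the coordinate projection $\ell_1^m\to\ell_1^2$ are positive contractions whose composition is the identity on $\ell_1^2$; \refT{Tfunctor+} then makes the induced embedding $(\ell_1^2)\snnpisp\hookrightarrow(\ell_1^m)\snnpisp$ isometric, so $\cssp(n,\ell_1^2)\le\cssp(n,\ell_1^m)\le\gamma(n)$, giving $\cssp(n,\ell_1^m)=\gamma(n)$ in all cases. The chain \eqref{tcssp} follows: the first equality is \eqref{gamob}; each middle supremum is bounded above by $\gamma(n)$ via \eqref{csspgam} with $\cp=1$, and below by $\cssp(n,\ell_1)=\gamma(n)$ already shown. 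The \emph{main obstacle} is arranging $\norm T=1$ (rather than $\norm T\le\cp(E)$ or similar): the trick is to normalize each $Te_i$ to a \emph{unit} positive vector in $E$ and to absorb the sizes $\norm y$, $\norm z$ into $x'\in\ell_1^2$, whose $\ell_1^2$-norm then equals $\norm y+\norm z\le\normp x+\eps$; with any less careful bookkeeping one ends up with a stray factor of $\cp(E)$ on the right.
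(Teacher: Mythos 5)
Your proof is correct and follows essentially the same route as the paper: both construct a norm-one positive operator $T\colon\ell_1^2\to E$ (or $\ell_1^m\to E$) by normalizing the two halves of a near-optimal decomposition $x=y-z$, absorb the magnitudes $\norm y,\norm z$ into a vector $x'=(\norm y,-\norm z)$ of $\ell_1$-norm $\norm y+\norm z$, and then apply \refT{Tfunctor+} and \eqref{csspn2} to pull the estimate back through \eqref{normpisp+} and \eqref{gamoa}. The only cosmetic difference is that you prove the lower bound for $\ell_1^2$ and then transfer it to $\ell_1^m$ via the isometric positive embedding, whereas the paper pads $x'$ with zeros and works with $\ell_1^m$ directly throughout.
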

We will find the explicit value $2^{n-1}$ in \refT{Tgamma}.

\begin{proof}
  Since $\cp(\ell_1^m)=1$, 
$\cssp(n,\ell_1^m)\le\gam(n)$ by \eqref{csspgam}.

Conversely, suppose that $x=y-z$ with $y,z\in \Epp$.
Let $y_0:=y/\norm{y}$ and $z_0:=z/\norm{z}$ 
(with $0/0:=0$).
Further, assuming $m\ge2$, let 
$\xiu:=(\norm{y},-\norm z,0,\dots)\in\ell_1^m$; then $\norm{\xiu}=\norm y+\norm
z$.

Define the linear map $T:\ell_1^m\to E$ 
by
$T(a_1,a_2,\dots)=a_1 y_0+a_2 z_0$. Then $T(\xiu)=y-z=x$.
Furthermore, 
$T$ has norm (at most) 1 and maps positive
elements to positive, and therefore by \refT{Tfunctor+},
$T\nn$ maps $(\ell_1^m)\snnpisp$ into $E\snnpisp$ with norm at most 1.
Consequently,
recalling \eqref{bip},
\begin{align}
    \normpispq{x\nn}{E}
&=
  \normpispq{T\nn\xiu\nn}{E}
\le   \normpispq{\xiu\nn}{\ell_1^m}
\le \cssp(n,\ell_1^m)  \normpisq{\xiu\nn}{\ell_1^m}
\notag\\&
= \cssp(n,\ell_1^m)  \norm{\xiu}^n
= \cssp(n,\ell_1^m)  (\norm{y}+\norm{z})^n.	
\label{kroken}
\end{align}
Taking the infimum over all decompositions $x=y-z$ with $y,z\in \Epp$ yields
\begin{equation}
  \normpisp{x\nn}
\le \cssp(n,\ell_1^m)  \normp{x}^n
.
\end{equation}
This holds for every $x\in E$, and hence, by
\eqref{csspn2} and \eqref{normpisp+}, 
\begin{equation}
\cssp(n,\Ep)\le
 \cssp(n,\ell_1^m).
\end{equation}
This holds for every normed space $E$, and thus \eqref{gamoa} shows
$\gam(n)\le\cssp(n,\ell_1^m)$. 
Hence, each term in \eqref{tcssp} is at least $\gam(n)$.
On the other hand, 
$\cssp(n,\ell_1^m)$ and all
terms in \eqref{tcssp} are at most $\gam(n)$ by
\eqref{csspgam}. Hence, equalities hold.
\end{proof}

\begin{remark}\label{Rcsp}
  In analogy with \eqref{rc}, we can define
  \begin{align}
	\csp(E)&:=\limsup_\ntoo \csp(n,E)^{1/n},\label{cspoo}
\\
	\cssp(E)&:=\limsup_\ntoo \cssp(n,E)^{1/n}.\label{csspoo}
  \end{align}
By \eqref{abc}, \eqref{rc2}, \eqref{csspgam} and \refT{Tgamma} below,
\begin{align}
1\le  \cssp(E)&\le\csp(E)\le\cs(E)\cssp(E)\le e\cssp(E), \label{elea}
\\
 \cssp(E)&\le 2\cp(E). \label{eleb}
\end{align}

For example, by 
\refT{Tgamma}, $\cssp(\ell_1^m)=2$ for
$2\le m\le\infty$. By \eqref{elea}, $2\le \csp(\ell_1^m)\le2e$; we do not
know the exact value.
\end{remark}

\section{The value of $\gamma(n)$}\label{Sgamma}

The proof of \refL{Lpi} yields an upper bound for $\gam(n)$ in
\eqref{mub}--\eqref{mua}. However, it seems difficult to evaluate this
exactly in general, and we do not know whether this method yields an upper
bound is optimal. We thus find $\gamma(n)$ by a different method, using
\refT{Tcssp}. 
(This gives another proof of \refL{Lpi}\ref{Lpisp}.)

\begin{theorem}
  \label{Tgamma}
  \begin{romenumerate}
  \item 
For $n\ge1$, $\gam(n)=2^{n-1}$.
\item \label{Tgammacx}
  If\/ $2\le m\le \infty$ and $n\ge1$, then
$\cssp(n,\ell_1^m)=2^{n-1}$.
  \end{romenumerate}
\end{theorem}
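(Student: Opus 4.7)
The plan is to prove part~\ref{Tgammacx} directly for $m=2$; combined with \refT{Tcssp} (which asserts $\cssp(n,\ell_1^m)=\gam(n)$ for every $m\ge2$), this yields both parts. By \eqref{csspn2} it suffices to compute $\sup\{\normpisp{u\nn}/\norm{u}^n:u\in\ell_1^2\}$. Since the ratio is $1$ whenever $u$ or $-u$ is positive, we restrict to $u=(a,-b)$ with $a,b\ge 0$; by homogeneity take $a+b=1$ and set $\theta:=a-b\in[-1,1]$.

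Every positive unit vector $y$ in $\ell_1^2$ has the form $y=\bigl((1+\tau)/2,(1-\tau)/2\bigr)$ with $\tau\in[-1,1]$. Expanding a candidate decomposition $u\nn=\sum_jc_jy_j\nn$ with $y_j$ of this type in the basis $\{e_1^{\vee(n-k)}\vee e_2^{\vee k}\}_{k=0}^n$ of $(\ell_1^2)\snn$ and matching coefficients translates it into the $(n+1)$ moment equations
\begin{equation*}
  \sum_jc_j\theta_j^m=\theta^{n-m},\qquad m=0,1,\dots,n.
\end{equation*}
Hence $\normpisp{u\nn}$ equals the minimum total variation of a finitely supported signed measure $\mu=\sum_jc_j\gd_{\theta_j}$ on $[-1,1]$ satisfying these moments; equivalently, $\int p\,d\mu=\theta^np(1/\theta)$ for every $p\in\bbR[x]$ of degree $\le n$. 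By Hahn--Banach duality the minimum equals $\sup\{|\theta^np(1/\theta)|:\deg p\le n,\ \norm{p}_{C[-1,1]}\le1\}$, and since $|1/\theta|\ge1$, the classical Chebyshev extremal property identifies this with $|\theta|^nT_n(1/|\theta|)$, where $T_n$ is the $n$-th Chebyshev polynomial.

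To bound this uniformly, set $|\theta|=1/\cosh\psi$ with $\psi\ge0$; then $|\theta|^nT_n(1/|\theta|)=\cosh(n\psi)/\cosh^n\psi$, whose logarithmic derivative $n(\tanh n\psi-\tanh\psi)$ is nonnegative. Thus the expression increases monotonically in $\psi$ to the limit $2^{n-1}$ as $\psi\to\infty$ (equivalently $\theta\to0$), giving $\cssp(n,\ell_1^2)\le 2^{n-1}$. The reverse inequality is witnessed by $u=(1/2,-1/2)$ (where $\theta=0$): the moment conditions reduce to $\sum_jc_j\theta_j^m=\gd_{mn}$, and testing against $T_n$, which has leading coefficient $2^{n-1}$ and $\norm{T_n}_{C[-1,1]}=1$, yields $\sum_j|c_j|\ge|\sum_jc_jT_n(\theta_j)|=2^{n-1}$. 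The main obstacle is the identification of $\normpisp{u\nn}$ with the solution of a one-dimensional signed moment problem on $[-1,1]$; once this reduction is made, both bounds follow from standard properties of $T_n$.
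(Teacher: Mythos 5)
Your proposal is correct and follows essentially the same route as the paper: reduce via \refT{Tcssp} to $\cssp(n,\ell_1^2)$, use \eqref{csspn2} to reduce to tensor powers $u\nn$, rewrite the decomposition of $u\nn$ into positive tensor powers as a signed moment problem on an interval, dualize via Hahn--Banach, and identify the extremal functional with the Chebyshev polynomial $T_n$. The only cosmetic differences are that you parametrize positive unit vectors directly on $[-1,1]$ (rather than on $[0,1]$ followed by an affine change of variable as in the paper), and you establish $|\theta|^nT_n(1/|\theta|)\le 2^{n-1}$ via monotonicity of $\cosh(n\psi)/\cosh^n\psi$ rather than the direct inequality $T_n(x)\le 2^{n-1}x^n$.

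One small point to tighten: your upper bound is established via the formula $|\theta|^nT_n(1/|\theta|)$, which only applies for $\theta\ne 0$; at $\theta=0$ you prove only the lower bound $\normpisp{(\tfrac12,-\tfrac12)\nn}\ge 2^{n-1}$, but for the conclusion $\cssp(n,\ell_1^2)\le 2^{n-1}$ you also need the value at $\theta=0$ not to exceed $2^{n-1}$. This is immediate either by continuity of $\normpisp{\,}$ in $\theta$, or (as the paper does) by noting that at $\theta=0$ the moment functional is $p\mapsto[x^n]p(x)$, whose norm on $P_n[-1,1]$ is exactly $2^{n-1}$ with $T_n$ extremal — the Hahn--Banach duality then gives the exact value, not just a lower bound. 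Worth making explicit, but it does not affect the substance of the argument.
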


\begin{proof}
By \refT{Tcssp}, 
$\gam(n)=\cssp(n,\ell_1^m)$, for any $m\ge2$.
Hence, the two parts are equivalent,
and it suffices to prove \ref{Tgammacx} with $m=2$. 
Thus, let $E=\ell_1^2$ and
use \eqref{csspn2}, which yields
\begin{equation}\label{gamab}
  \gam(n)=\cssp(n,\ell_1^2)
=\sup_{a,b\in\bbR} \frac{\normpisp {(a,b)\nn}}{(|a|+|b|)^n}.
\end{equation}
Fix $n\ge1$ and write, for convenience, 
\begin{equation}\label{psi}
\psi(a,b):=\normpispq {(a,b)\nn}{\ell_1^2}.    
\end{equation}
Since $-(a,b)=(-a,-b)$, it  suffices to consider $a\ge0$.
Obviously, if  $a,b\ge0$, then $(a,b)\in \ellaap$ and thus, 
by \eqref{bup},
\begin{equation}\label{psi++}
  \psi(a,b):=\norm{(a,b)}^n=(a+b)^n,
\qquad a,b\ge0.
\end{equation}
Hence, the interesting case is $a>0>b$.
However, we continue to consider general $a,b\in\bbR$.

The unit vectors in $\ellaap$ are $(x,1-x)$, $x\in\oi$.
Consequently, the definition \eqref{normpisp}
can be written as
\begin{equation}\label{neutrino}
\psi(a,b):=
  \normpisp{(a,b)\nn}
:=
\inf\norm{\mu}=\inf\intoi|\ddx\mu|(x),
\end{equation}
taking the infimum over all signed measures of the type 
$\mu=\sumkN a_k\gd_{x_k}$ on $\oi$ such that
\begin{equation}
  \label{ab}
\intoi (x,1-x)\nn\dd\mu(x)=(a,b)\nn.
\end{equation}
In other words, we take the infimum over all signed measures with
finite support in $\oi$ that satisfy \eqref{ab}. On the other hand, for any
signed measure on $\oi$, 
\begin{equation}
  \Bignormpisp{\intoi (x,1-x)\nn\dd\mu(x)}
\le
\intoi  \bignormpisp{(x,1-x)\nn}|\dd\mu|(x)
=
\intoi|\dd\mu|(x)
\end{equation}
since the integral exists as a Bochner integral in $(\ellaa)\snnpisp$.
(Recall that the spaces are finite-dimensional, so there is no problem with
convergence.) 
Consequently, we can just as well take the infima in \eqref{neutrino} over
all signed measures $\mu$ on $\oi$ satisfying \eqref{ab}.

Expanding the tensor products in \eqref{ab} in $(\ellaa)\nn$, we see that
\eqref{ab} is equivalent to the system of equations
\begin{equation}
  \label{abk}
\intoi x^{n-k}(1-x)^k\dd\mu(x)=a^{n-k}b^k,
\qquad k=0,\dots,n.
\end{equation}

The coefficients of the $n+1$ polynomials $q_k(x):=x^{n-k}(1-x)^k$,
$k=0,\dots,n$, form a
triangular matrix which is non-singular; consequently these polynomials form
a basis in the $(n+1)$-dimensional space $\pn$ of polynomials (of a real
variable) of degree at most $n$.
Hence, there exists a unique linear functional $\chiab$ on $\pn$ such that
\begin{equation}
  \label{chiab}
\chiab(q_k)=a^{n-k}b^k,\qquad k=0,\dots,n, 
\end{equation}
and \eqref{abk} is equivalent to
$\intoi q_k(x)\dd\mu(x)=\chiab(q_k)$, $k=0,\dots,n$, and thus to
\begin{equation}\label{abp}
  \intoi p(x)\dd\mu(x)=\chiab(p),
\qquad p\in\pn.
\end{equation}
For a compact interval $[c,d]\subset \bbR$, let 
$C[c,d]$ be the standard space of
(real) continuous functions on $[c,d]$ with the norm 
\begin{equation}
  \label{normcd}
\norm{f}:=\sup_{x\in  [c,d]}|f(x)|, 
\end{equation}
and let $\pn[c,d]$ denote $\pn$
regarded as a subspace of $C[c,d]$, \ie, equipped with the norm \eqref{normcd}.
The dual space of $C\cd$ is the space of signed measures on $\cd$, with the
total variation norm as in \eqref{neutrino}. Hence \eqref{neutrino} and
\eqref{abp} yield
\begin{equation}\label{psiab5}
\psi(a,b)
=\inf\bigset{\norm{\mu}_{C\oi^*}:\mu(p)=\chiab(p)\text{ for }p\in\pn\oi},
\end{equation}
which by the Hahn--Banach theorem 
yields
\begin{equation}\label{psiab2}
  \psi(a,b)=\norm{\chiab}_{\pn\oi^*}.
\end{equation}

We next identify $\chiab$. The definition \eqref{chiab} and the binomial
theorem yield, for $k=0,\dots,n$,
\begin{equation}\label{chiab3}
  \begin{split}
  \chiab\bigpar{x^{n-k}}
&=
  \chiab\bigpar{x^{n-k}(x+1-x)^k}
=\sum_{j=0}^k\binom kj\chiab\bigpar{x^{n-k+k-j}(1-x)^j}
\\&
=\sum_{j=0}^k \binom kj a^{n-k+k-j}b^j
=a^{n-k}(a+b)^k
\\&
=(a+b)^n \Bigparfrac{a}{a+b}^{n-k},
  \end{split}
\raisetag{\baselineskip}
\end{equation}
where the last equality assumes that $a+b\neq0$.
Consequently, if $a+b\neq0$, then
\begin{equation}\label{chiab4}
\chiab\bigpar{p}= (a+b)^n p\Bigparfrac{a}{a+b}
\end{equation}
for the monomials $p(x)=x^{n-k}$, and thus for all $p\in \pn$.
It can also be seen immediately that \eqref{chiab4} defines a linear
functional on $\pn$ that satisfies \eqref{chiab}.
Hence, in this case $\chiab$ is essentially a point evaluation at $a/(a+b)$,
and $\psi(a,b)$ is by \eqref{psiab2} given by the optimization problem
\begin{equation}\label{psiab6}
  \psi(a,b)=|a+b|^n\sup \Bigset{
	\Bigabs{p\Bigparfrac{a}{a+b}}:\max_{x\in\oi} |p(x)|=1},
\qquad a+b\neq0.
\end{equation}
Note that if $a,b\ge0$ (with $a+b>0$), then $a/(a+b)\in\oi$, 
so the supremum in \eqref{psiab5} is trivially 1, and thus
$\psi(a,b)=(a+b)^n$, as seen directly in \eqref{psi++}.
In contrast, in the case $a>0>b$, $a/(a+b)\notin\oi$, so \eqref{psiab6}
becomes an extrapolation problem.

In the case $a+b=0$, \eqref{chiab3} yields instead $\chiab(x^{n-k})=0$ for
$k\ge1$ and $\chiab(x^n)=a^n$. Hence, letting $[x^k]p(x)$ denote the
coefficient of $x^k$ in the polynomial $p(x)$,
\begin{equation}\label{chiaa}
  \chi_{a,-a} \bigpar{p(x)}=a^n [x^n]p(x).
\end{equation}
In other words, apart from a constant factor, $\chi_{a,-a}$ extracts the
coefficient of $x^n$. (This can also be seen as a limiting case of
\eqref{chiab4}, with $a/(a+b)\to\infty$.)

We consider the two cases separately, beginning with the case $b=-a$.
By homogeneity, it suffices to consider $a=1$.
By \eqref{psiab2} and \eqref{chiaa},
\begin{equation}
  \psi(1,-1)=\bignorm{p\mapsto[x^n]p(x)}_{\pn\oi^*}.
\end{equation}
The mapping $p(x)\mapsto p(2x-1)$ is an isometric bijection 
of $\pn[-1,1]$ onto $\pn\oi$. Since $[x^n]p(2x-1)=2^n [x^n]p(x)$,
it follows that we have
\begin{equation}\label{psi11}
  \psi(1,-1)=2^n\bignorm{p\mapsto[x^n]p(x)}_{\pn[-1,1]^*}.
\end{equation}
We thus want to find the largest possible coefficient of $x^n$ for a
polynomial of degree $n$ that is bounded by 1 on $[-1,1]$; equivalently, we
want to find the polynomial $p(x)$ with leading coefficient $x^n$ such that
$\norm{p}_{C[-1,1]}=\sup_{-1,1}|p(x)|$ is minimal. This is a classical
problem in approximation theory, 
which is solved by a multiple of the Chebyshev polynomial
$T_n(x):=\cos(n\arccos x)$, see \eg{} 
\cite[18.38(i)]{NIST} or \citet[Theorem 2.1]{Rivlin}.
Since $T_n$ has norm 1 in $\pn[-1,1]$ and its leading coefficient is $2^{n-1}$,
it follows that 
$p\mapsto[x^n]p(x)$ has norm $2^{n-1}$ on $\pn[-1,1]$, and thus
\eqref{psi11} yields
\begin{equation}\label{olo}
  \psi(1,-1)=2^{2n-1}.
\end{equation}
Consequently, \eqref{gamab} yields
\begin{equation}\label{ola}
\gam(n)\ge \frac{\psi\xpar{1,-1}}{2^n}=2^{n-1}.
\end{equation}

In order to see that equality holds in \eqref{ola}, we now consider the case
$a+b\neq0$, where we have shown \eqref{psiab6}. It suffices to consider the
case $|a|>|b|$ and $a>0>b$; then $\frac{a}{a+b}>1$.
We transfer again to $\pn\wii$ by the mapping $p(x)\mapsto p(2x-1)$ and see
that $\chi_{a,b}$ in \eqref{chiab4} then corresponds to
\begin{equation}\label{kk}
p\mapsto  
(a+b)^n p\Bigpar{2\frac{a}{a+b}-1}
=
(a+b)^n p\Bigpar{\frac{a-b}{a+b}}.
\end{equation}
Let $\xi:=\frac{a-b}{a+b}>1$. The problem is now to maximize $p(\xi)$ for
$p\in\pn$ with $\sup_{-1\le x\le 1}|p(x)|\le1$.
Again, the (unique)
extremal polynomial is the Chebyshev polynomial $T_n(x)$, see 
\cite[2.7.1]{Rivlin}; hence \eqref{psiab2} and \eqref{kk} yield
\begin{equation}\label{olb}
  \psi(a,b)=(a+b)^n T_n\Bigpar{\frac{a-b}{a+b}},
\qquad
a>0>b \text{ and }  a+b>0.
\end{equation}

Finally, we note that if $x>1$ and $y:=\arccosh x$, then $T_n(x)=T_n(\cosh
y)=\cosh(ny)$, and thus
\begin{equation}
  T_n(x)=\frac12\bigpar{e^{ny}+e^{-ny}}
\le \frac12\bigpar{e^{y}+e^{-y}}^n
=2^{n-1}x^n.
\end{equation}
Consequently, \eqref{olb} implies,
for $a>0>b$ and $a+b>0$,
\begin{equation}\label{olc}
  \psi(a,b)\le(a+b)^n 2^{n-1}\Bigpar{\frac{a-b}{a+b}}^n
=2^{n-1}(a-b)^n = 2^{n-1}(|a|+|b|)^n.
\end{equation}
It follows from \eqref{olc} and \eqref{olo} (which is a limiting case that also
follows from \eqref{olb} by continuity), together with the trivial case
$a,b\ge0$ treated earlier,   that 
$\psi(a,b)\le 2^{n-1}(|a|+|b|)^n$ for all real $a$ and $b$.
Consequently, \eqref{gamab} yields
\begin{equation}
  \gam(n)=\sup_{a,b\in\bbR}\frac{\psi(a,b)}{(|a|+|b|)^n}\le 2^{n-1}.
\end{equation}
By \eqref{olo} and \eqref{ola}, we have also the opposite inequality, and 
\refT{Tgamma} is proved.
\end{proof}

\begin{remark}\label{R466}
Since $T_n(x)=\frac12\bigpar{(x+\sqrt{x^2-1})^n+(x-\sqrt{x^2-1})^n}$,
the formula \eqref{olb} in  
the proof can be written (changing the sign of $b$)
  \begin{equation}\label{466}
	\begin{split}
	\normpisp{(a,-b)\nn}
&
=\psi(a,-b)
=(a-b)^n T_n\Bigpar{\frac{a+b}{a-b}}
\\&
=\frac{\bigpar{a+b+2\sqrt{ab}}^n+\bigpar{a+b-2\sqrt{ab}}^n}2
\\&
=\frac{\bigpar{\sqrt a+\sqrt b}^{2n}+\bigpar{\sqrt a-\sqrt b}^{2n}}2	  ,
	\end{split}
  \end{equation}
valid for any $a,b\ge0$ by symmetry, with the case $a=b$ following by
continuity or by \eqref{olo}.
\end{remark}

\begin{example}
  \label{Eab1}
We used in the proof of \refT{Tgamma}
the classical fact that $T_n(x)$ is extremal for
\eqref{psi11}. This can be seen as follows, which also yields an explicit
decomposition of the tensor product $(a,b)\nn$. 
(See \citet{Rivlin} for further details
and related results.)

We substitute $x=\cos\gth$; this yields an isometry $p\mapsto p(\cos\gth)$
of $\pn[-1,1]$ onto the space of trigonometric polynomials
\begin{equation}
\cT_n:=\biggset{\sum_{k=0}^n a_k \cos^k\gth:a_0,\dots,a_n\in\bbR}
=\biggset{\sum_{k=-n}^n b_{|k|} e^{\ii k\gth}:b_0,\dots,b_n\in\bbR}
\end{equation}
with the norm $\norm{q}_{\cT_n}=\sup_\gth|q(\gth)|$.
The linear functional $p\mapsto[x^n]p(x)$ on $\pn[-1,1]$ corresponds to the
linear functional $\chi$ 
mapping a trigonometric polynomial 
$q(\gth)=\sum_{k=0}^n a_k \cos^k\gth=\sum_{k=-n}^n b_{|k|} e^{\ii k\gth}$
to $a_n=2^nb_n$.
A simple calculation (a Fourier inversion in $\bbZ_{2n}$) yields
\begin{equation}\label{chib}
  \begin{split}
	\frac{1}{2n}\sum_{j=0}^{2n-1} (-1)^jq\Bigparfrac{j\pi}{n} 
=
\frac{1}{2n}\sum_{j=0}^{2n-1}\sum_{k=-n}^n b_{|k|} e^{\ii j(k+n)\pi/n}
=2b_n 
  \end{split}
\end{equation}
and thus 
\begin{equation}
  |b_n| \le \frac12\norm{q},
\end{equation}
with equality for $q(\gth)=\cos(n\gth)$.
Consequently, the linear functional $q\mapsto b_n$ has norm $\frac12$ on
$\cT_n$, so the linear functional $q\mapsto a_n=2^nb_n$ has norm $2^{n-1}$.
As said above, this corresponds by an isometry  to the linear functional
$[x^n]p(x)$ on $\pn[-1,1]$, so this functional too has norm $2^{n-1}$ and
\eqref{olo} follows.

We see also from \eqref{chib} that for any $p\in\pn$, with
$q(\gth)=p(\cos\gth)$, 
\begin{equation}\label{chia}
  \begin{split}
[x^n]p(x)=2^n b
=\frac{2^{n-1}}{2n}\sum_{j=0}^{2n-1} (-1)^jq\Bigparfrac{j\pi}{n} 
=\frac{2^{n-1}}{2n}\sum_{j=0}^{2n-1} (-1)^jp\Bigpar{\cos\frac{j\pi}{n}} 
.  \end{split}
\end{equation}
Transforming back to \oi, this yields
\begin{equation}\label{chic}
  \begin{split}
\chi_{1,-1}(p)
&=
[x^n]p(x)
=\frac{2^{2n-1}}{2n}\sum_{j=0}^{2n-1} (-1)^j
p\biggpar{\frac{1+\cos\frac{j\pi}{n}}2} 
\\&
=\frac{2^{2n-1}}{2n}\sum_{j=0}^{2n-1} (-1)^j
p\Bigpar{\cos^2\frac{j\pi}{2n}} 
.  \end{split}
\end{equation}
This yields an optimal representation of $\chi_{1,-1}$ as a signed measure
$\mu$ on \oi, which by the argument above corresponds to an optimal
decomposition of $(1,-1)\nn$ into positive tensor powers:
\begin{equation}\label{chit}
  (1,-1)\nn
=
\frac{2^{2n-1}}{2n}\sum_{j=0}^{2n-1} (-1)^j
\Bigpar{\cos^2\frac{j\pi}{2n},\sin^2\frac{j\pi}{2n}}\nn .
\end{equation}
(Note that there are only $n+1$ different tensor powers on the \rhs, since
the terms for $j$ and $2n-j$ are equal in \eqref{chit}, as well as in
\eqref{chia} and \eqref{chic}.) Moreover, it follows also from this argument
that this  optimal decomposition is unique.
\end{example}

\begin{example}
  We can similarly find an optimal decomposition of $(a,-b)\nn$ for arbitrary
$a,b>0$. Assume $a-b\neq0$; then \eqref{kk} (with $-b$ instead of $b$) and
the arguments above
show that we want to represent
the linear functional $p\mapsto p(\xi)$ 
on $\pn\wii$
for a given $\xi=\frac{a+b}{a-b}$ 
with $|\xi|>1$.
Again we seek a representation as a linear combination of
$p\bigpar{\cos \frac{j\pi}n}$, $j=0,\dots,n$, since these are the points
where $|T_n(x)|$ attains its maximum on $\wii$, Thus, again extending the
summation to $j=0,\dots,2n-1$ for convenience, 
we want to find $c_j(\xi)$, with $c_{2n-j}(\xi)=c_j(\xi)$, 
such that
\begin{equation}\label{pyrt}
  p(\xi)
=\sum_{j=0}^{2n-1} c_j(\xi)p\Bigpar{\cos\frac{j\pi}{n}},
\qquad p\in\pn.
\end{equation}
In fact, if \eqref{pyrt} holds, then
it extends to vector-valued polynomials (by considering each component
separately);
taking $p$ to be the vector-valued polynomial
$\bigpar{\frac{1+x}2,\frac{1-x}2}\nn$ then yields
\begin{equation}\label{chiq}
  (a,-b)\nn
=
\sum_{j=0}^{2n-1} (a-b)^n c_j\Bigpar{\frac{a+b}{a-b}}
\Bigpar{\cos^2\frac{j\pi}{2n},\sin^2\frac{j\pi}{2n}}\nn .
\end{equation}
Since $\pn$ has dimension $n+1$, there exists a unique such representation
\eqref{pyrt}. 
Moreover, the general theory, see \cite[Chapter 2]{Rivlin} for details, 
or alternatively the calculations at the end of this example,
shows that the representation \eqref{pyrt} is optimal in the sense that
$\sum_j|c_j(\xi)|$ equals the norm of $p\mapsto p(\xi)$ on $\pn\wii$;
furthermore, this is the unique optimal representation.
Consequently, \eqref{chiq} yields the unique optimal decomposition of
$(a,-b)\nn$.

In order to find $c_j(\xi)$, we take $p(x)=T_k(x)=\cos(k\arccos x)$ in
\eqref{pyrt} and find
\begin{equation}\label{tcos}
  T_k(\xi) 
=\sum_{j=0}^{2n-1} c_j(\xi)\cos\frac{jk\pi}{n},
\qquad k=0,\dots n.
\end{equation}
Furthermore, by our choice $c_{2n-j}(\xi)=c_j(\xi)$, 
$
\sum_{j=0}^{2n-1} c_j(\xi)\sin\frac{jk\pi}{n}=0
$
for any $k$; hence \eqref{tcos} yields
\begin{equation}
\sum_{j=0}^{2n-1} c_j(\xi)e^{-\ii jk\pi/n}
=T_{|k|}(\xi),
\qquad k=-n,\dots n.
\end{equation}
A Fourier inversion (on $\bbZ_{2n}$) now yields
\begin{equation}\label{cta}
c_j(\xi)=
\frac{1}{2n}
\sum_{k=-n}^{n-1} e^{\ii jk\pi/n}T_{|k|}(\xi)
.
\end{equation}
Substituting this in \eqref{chiq} yields the optimal decomposition of
$(a,-b)\nn$ for any $a,b>0$, with the case $a=b$ in \eqref{chit}
interpreted as a limit.

We can calculate the coefficients $c_j(\xi)$ in \eqref{cta} more explicitly.
Suppose that $a>b>0$, so $\xi=\frac{a+b}{a-b}>1$, and let 
$y:=\arccosh \frac{a+b}{a-b}>0$.
Then $T_k(\xi)=\cosh(ky)$, and thus \eqref{cta} yields
\begin{align}\label{chiqq}
c_j\Bigpar{\frac{a+b}{a-b}}&
=
\frac{1}{2n}
\sum_{k=-n}^{n-1} e^{\ii jk\pi/n}\cosh (ky)
=
\frac{1}{4n}
\sum_{k=-n}^{n-1} \bigpar{e^{\ii jk\pi/n+ky}+e^{\ii jk\pi/n-ky}}
\notag\\&
=\frac{1}{4n}\frac{e^{\ii j\pi}\bigpar{e^{ny}-e^{-ny}}}{e^{\ii j\pi/n+y}-1}
+ \frac{1}{4n}\frac{e^{\ii j\pi}\bigpar{e^{-ny}-e^{ny}}}{e^{\ii j\pi/n-y}-1}
\notag\\&
=(-1)^j\frac{\sinh(ny)}{2n}
\Bigpar{\frac{1}{e^{\ii j\pi/n+y}-1}-\frac{1}{e^{\ii j\pi/n-y}-1}}
\notag\\&
=(-1)^j\frac{\sinh(ny)}{2n} \frac{e^{\ii j\pi/n-y}-e^{\ii j\pi/n+y}}
 {e^{2\ii j\pi/n}+1-e^{\ii j\pi/n}\bigpar{e^y+e^{-y}}}
\notag\\&
=(-1)^j\frac{\sinh(ny)}{2n} \frac{\sinh y}
{\cosh y-\cos(j\pi/n)}.
\end{align}
In particular, note that $\sign\bigpar{c_j(n)}=(-1)^j$, so
$c_j(\xi)$ alternates in sign. This shows by
\eqref{pyrt} and the fact that $T_n(\cos(j\pi/n))=(-1)^j$,
\begin{align}
\bignorm{p\mapsto p(\xi)}_{\pn[-1,1]^*}
=\sum_{j=0}^{2n-1}\bigabs{c_j(n)}
\end{align}
and thus, by \eqref{psiab2} and \eqref{kk} (still with $b$ replaced by $-b$)
\begin{align}\label{chiqz}
\psi(a,-b)
=(a-b)^n\sum_{j=0}^{2n-1}\bigabs{c_j(n)}.
\end{align}
This
verifies directly that the decomposition \eqref{chiq} is optimal,
without the general theory referred to above.
\end{example}

\begin{remark}
  Another expression for $c_j(\xi)$ can be obtained using the Lagrange
  interpolation polynomials $\ell_k(x)$ for the points
  $x_j=\cos\frac{j\pi}n$, $j=0,\dots,n$,
 see \cite[\S3.3]{NIST}; 
these are 
given by
$\ell_k(x)=\prod_{j\neq k}\frac{x-x_j}{x_k-x_j}$ and are
  characterized 
as the polynomials in $\pn$
satisfying $\ell_k(x_j)=\gd_{jk}$, and thus, for any polynomial $p\in\pn$
and any real (or complex) $\xi$,
  \begin{equation}
	p(\xi)=\sum_{j=0}^n \ell_j(\xi)p(x_j).
  \end{equation}
Consequently, $c_j(\xi)=\ell_j(\xi)$, now summing for $j=0,\dots,n$ only.
\end{remark}

\section{Exchangeable random variables}\label{Sexch}

\subsection{More notation}\label{SSnot2}
Let $S=(S,\cS)$ be an arbitrary measurable space. 
$\cM(S)$ denotes the Banach space of
(finite) signed measures on $S$, 
with $\norm{\mu}$ defined to be the total variation
of $\mu$. Furthermore, $\cP(S)$ is the subset of probability measures on
$S$, \ie, the positive measures with norm 1. 
We regard $\cM(S)$ and $\cP(S)$ as measurable spaces with the
$\gs$-fields generated by the evaluations $\mu\mapsto \mu(A)$ for measurable
$A\subseteq S$ (\ie, $A\in\cS$). 
Recall that if $X$ is a random element of $S$, then its distribution is a
measure in $\cP(S)$.

If $x\in S$, then $\gd_x$ denotes the Dirac measure, \ie, unit point mass,
at $x$. (This is the distribution of the non-random $X:=x$.)

For a finite (or countable) set $S$, we identify the space $\cM(S)$ of signed
measures on $S$ with $\ell_1(S)$.
In particular, $\gd_x$ is  identified with the vector 
$(\indicq{y=x})_{y\in S}\in\ell_1(S)$, and thus
$\gd_i=e_i$ when $S=\bbN$.

Let $\setnn:=\setn$.

\subsection{Finitely exchangeable distributions}

Let $S=(S,\cS)$ be a measurable space. A random vector $\XX=(X_1,\dots,X_n)$
with values in $S^n$ is \emph{(finitely) exchangeable} if its distribution
is symmetric under permutations. 
See \eg{} \citet{Aldous} for a survey of both finite and infinite
exchangeability.

For an infinite exchangeable sequence $\XX=(X_i)_1^\infty$, the well-known de
Finetti's theorem says that under weak technical conditions on $S$ (for
example that $S$ is a Borel space), the distribution is a mixture of 
product (power) measures, see \eg{} 
\cite[\S2]{Aldous} or
\cite[Theorem 1.1]{Kallenberg-symmetries}.
In formulas, this says that 
if $\cP(S)$ is the space of probability measures on $S$, and 
$\muXX\in\cP(S^\infty)$ is the distribution of $\XX$, then
there exists a probability measure $\gl$ on $\cP(S)$
such that
\begin{align}
  \label{deFinetti}
\muXX=\int_{\cP(S)} \nu^\infty\dd\gl(\nu) .
\end{align}
It is also well-known that this, in general, fails for finitely exchangeable
sequences, see \eg{} \cite{Diaconis,DiaconisF}. 
A substitute in the finite case is that there always exists such
a representation with a \emph{signed} measure $\gl$.
To be precise, see 
\cite[V.52]{DM},
\cite{Jaynes}, \cite{KS06}, \cite{SJ308},
if $\XX=(X_1,\dots,X_n)$ is exchangeable, with values in an arbitrary
measurable space $S$, then 
there exists a signed measure $\gl$ on $\cP(S)$,
\ie, $\gl\in\cMPS$,
such that
\begin{align}
  \label{deFinetti+-}
\muXX=\int_{\cP(S)} \nu^n\dd\gl(\nu) .
\end{align}
A natural question (posed in \cite{SJ308})
is how large the total variation $\norm\gl$ of $\gl$ 
has to be.
Since $\muXX$ is a probability measure, we always have 
$\int\dd\gl(\nu)=1$, and thus $\norm{\gl}\ge1$, with equality if and only if
$\gl$ is a probability measure (as in de Finetti's theorem
\eqref{deFinetti}).
Hence, $\norm\gl$ is a measure of how far the representation 
is from the ideal representation as a mixture of powers.
Note that $\gl$ is not unique, so we are interested in the optimal $\gl$, or
more generally $\inf\norm\gl$ over all possible representing $\gl$ in
\eqref{deFinetti+-}.

An answer to this question is given by the following theorem, which connects
this problem to the  tensor norms studied above.

\begin{theorem}\label{Tex}
  \begin{thmenumerate}
  \item\label{Texa}%
If\/ $\XX=(X_1,\dots,X_n)$ is exchangeable, with values in an arbitrary
measurable space $S$, then 
its distribution $\muXX\in\cP(S^n)$
has a representation \eqref{deFinetti+-} with a 
signed measure $\gl$ on $\cP(S)$
such that
\begin{align}\label{tex1}
  \norm{\gl}_{\cM(\cP(S))}\le \kkn,
\end{align}
where, as in \eqref{tcsp1},
\begin{align}\label{tex2}
 \kkn:=
\csp(n,\ell_1)
=\normpispq{e_1\vees e_n}{\ell_1^n}.
\end{align}
The constant $\kkn$ given in \eqref{tex2} is, in general, the best
possible. We have
\begin{align}
  \label{tex3}
\frac{n^n}{n!}\le\kkn\le 2^{n-1}\frac{n^n}{n!}.
\end{align}
\item\label{Texm}%
If furthermore $S$ is finite, with $|S|=m$, then \eqref{tex1} can be
replaced by
    \begin{align}\label{texm}
  \norm{\gl}_{\cM(\cP(S))}\le \csp(n,\ell_1^m).
\end{align}
Moreover, this constant is the best possible for the given $S$.
If\/ $m\ge n$, then this constant equals $\kk(n)$.
  \end{thmenumerate}
\end{theorem}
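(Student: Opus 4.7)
My plan is to first establish the finite case, part~\ref{Texm}, by reducing it to the positive symmetric tensor norm $\normpisp{}$ analysed earlier, and then deduce part~\ref{Texa} via a finite-partition approximation. For $|S|=m<\infty$, identify $\cM(S)\cong\ell_1^m$ so that $\cM(S^n)\cong(\ell_1^m)\nn$ with projective tensor norm equal to the $\ell_1$-norm on $\ell_1^{m^n}$ (cf.\ \refE{Ematrix}, extended to general $n$). Since $\XX$ is exchangeable, $\muXX\in(\ell_1^m)\snn$, and as a positive probability measure it satisfies $\normpiq{\muXX}{\ell_1^m}=1$. A finitely-supported signed measure $\gl=\sum_k a_k\gd_{\nu_k}$ on $\cP(S)$ gives exactly an expansion $\muXX=\sum_k a_k\nu_k\nn$ of the form \eqref{normpisp} with $\nu_k\in\cP(S)$ positive of unit norm; conversely, any decomposition $\muXX=\sum_k a_k x_k\nn$ with $x_k\ge 0$ is recovered (after normalizing $\nu_k:=x_k/\norm{x_k}$) as $\gl=\sum_k a_k\norm{x_k}^n\gd_{\nu_k}$, with $\norm{\gl}=\sum_k|a_k|\norm{x_k}^n$. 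Extending to general $\gl$ by a Bochner integral in the finite-dimensional space $(\ell_1^m)\snn$ matches the two infima and yields
\begin{equation*}
  \inf_\gl\norm{\gl}_{\cM(\cP(S))}=\normpispq{\muXX}{\ell_1^m}.
\end{equation*}

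The bound \eqref{texm} is then immediate from \eqref{cspn}, since $\normpispq{\muXX}{\ell_1^m}\le\csp(n,\ell_1^m)\normpiq{\muXX}{\ell_1^m}=\csp(n,\ell_1^m)$, and this equals $\kkn$ for $m\ge n$ by \refT{Tcsp}. For the matching lower bound (sharpness for fixed $m$), I would argue that the supremum in \eqref{cspn} is attained, or approached, at positive symmetric tensors: decompose any near-extremal $\xx$ into its disjoint positive and negative parts $\xx_\pm\ge 0$ in $\ell_1^{m^n}$ (both still symmetric, since the $\fSn$-action preserves the positive cone), and observe that $\normpi{\xx}=\normpi{\xx_+}+\normpi{\xx_-}$ (disjoint support in $\ell_1^{m^n}$) while $\normpisp{\xx}\le\normpisp{\xx_+}+\normpisp{\xx_-}$; hence at least one of $\xx_\pm$ realises the same ratio $\normpisp{\cdot}/\normpi{\cdot}$, and after rescaling to unit $\ell_1$-norm it is the distribution of an exchangeable $\XX$ showing $\inf_\gl\norm{\gl}\ge\csp(n,\ell_1^m)-\eps$. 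When $m\ge n$, a concrete extremal example is $\XX:=(\sigma(1),\dots,\sigma(n))$ for a uniform random permutation $\sigma\in\fSn$: then $\muXX=e_1\vees e_n$ is a probability measure (the $n!$ distinct basis tensors each carry mass $1/n!$), so $\normpiq{\muXX}{\ell_1^n}=1$, while $\normpispq{e_1\vees e_n}{\ell_1^n}=\kkn$ by definition \eqref{tex2}. Finally, the bounds \eqref{tex3} drop out of \eqref{abc} combined with $\cs(n,\ell_1)=n^n/n!$ (\refE{El1}) and $\cssp(n,\ell_1)=2^{n-1}$ (\refT{Tgamma}).

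For general measurable $S$, the plan is to reduce to the finite case along a refining net of finite measurable partitions $\cA=\set{A_1,\dots,A_m}$ of $S$: the pushforward $\XX_\cA$ of $\XX$ under $s\mapsto j$ (when $s\in A_j$) is exchangeable on $[m]^n$, and part~\ref{Texm} provides a signed mixing measure $\gl_\cA$, supported on $\cP(\cA)\subset\cP(S)$, of norm at most $\kkn$ as soon as $m\ge n$. The main obstacle is the passage to a limit: one must extract from the net $\set{\gl_\cA}$ a signed $\gl\in\cM(\cP(S))$ with $\norm{\gl}\le\kkn$ and $\muXX=\int\nu^n\dd\gl(\nu)$. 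I would attempt this by viewing each $\gl_\cA$ as a bounded functional on the space of bounded measurable symmetric functions on $S^n$, applying Banach--Alaoglu along the cofinal directed set of refinements, and verifying that the weak-$*$ limit both preserves the norm bound and reproduces $\muXX$ against simple symmetric functions (which suffice to determine $\muXX$); the technical care lies in compatibility as partitions refine and in handling $\gs$-algebras that are not countably generated, which may require restricting to a countably generated sub-$\gs$-algebra on which the limiting construction is bona fide.
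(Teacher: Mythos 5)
Your treatment of part~\ref{Texm} is correct and is precisely the paper's argument: the identification $\inf_\gl\norm{\gl}_{\cM(\cP(S))}=\normpispq{\muXX}{\ell_1^m}$ is the paper's \eqref{puh}, the upper bound then follows from $\csp(n,\ell_1^m)$, the positive/negative decomposition gives sharpness, and the uniform random permutation is the extremal example when $m\ge n$. The deduction of \eqref{tex3} from \eqref{abc}, \refE{El1} and \refT{Tgamma} also matches.

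The gap is in part~\ref{Texa}. The plan to pass to a weak-$*$ limit of $\gl_\cA$ along refining finite partitions is not carried out, and its obstacles are not merely matters of care. For a general measurable space $S$, the set $\cP(S)$ carries only the $\gs$-field of \refSS{SSnot2}, not a topology; there is no space of continuous functions on $\cP(S)$ to dualize, so Banach--Alaoglu does not apply to $\cM(\cP(S))$. Furthermore nothing forces the $\gl_\cA$ to be compatible under refinement (the construction for a coarser partition need not be the pushforward of that for a finer one), so even along a countably generated sub-$\gs$-algebra there is no directed system whose limit is guaranteed to exist, and if it existed as a finitely additive set function its countable additivity would remain unproved.

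The paper avoids the limit entirely by a single direct construction, which also yields the norm bound and the representation formula at once. Fix one finite decomposition
$e_1\vees e_n=\sum_k a_k\,\ff_k\nn$ with $\ff_k\in\cP([n])$ and $\sum_k|a_k|=\kkn$ (possible by \refR{R=+}). For $\xx=(x_1,\dots,x_n)\in S^n$, the pushforward $\gf_\xx:\cM([n])\to\cM(S)$, $e_i\mapsto\gd_{x_i}$, is positive of norm one; transporting the fixed decomposition by $\gf_\xx\nn$ gives
$\gd_{x_1}\vees\gd_{x_n}=\sum_k a_k\bigpar{\gf_\xx(\ff_k)}\nn$,
so $\psi_\xx:=\sum_k a_k\,\gd_{\gf_\xx(\ff_k)}\in\cM(\cP(S))$ is a representing measure for $\gd_{x_1}\vees\gd_{x_n}$ with $\norm{\psi_\xx}\le\kkn$. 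Since $\xx\mapsto\psi_\xx$ is measurable for the $\gs$-field of \refSS{SSnot2} and uniformly bounded, $\gl:=\E\psi_\XX$ is a well-defined element of $\cM(\cP(S))$ with $\norm\gl\le\kkn$, and by exchangeability $\int\nu^n\dd\gl(\nu)=\E\bigpar{\gd_{X_1}\vees\gd_{X_n}}=\muXX$. The finite-dimensional tensor computation is thus done once, in $\ell_1^n$, and transported to $S$ by the random pushforward $\gf_\XX$; there is no limit to take and no topology needed on $\cP(S)$.
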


By \refE{Ekk2},  $\kk(2)=3$; hence neither of the bounds in \eqref{tex3} is
sharp.

\begin{problem}
  What is the exact value of $\kkn$?
\end{problem}

It follows from \eqref{tex3} and Stirling's formula that,
recalling \eqref{cspoo},
  \begin{align}\label{kang}
e\le \limsup_\ntoo \kkn^{1/n} =\csp(\ell_1)\le 2e.
  \end{align}
\begin{problem}
What is $\limsup_\ntoo \kkn^{1/n}$?
Does $\lim_\ntoo \kkn^{1/n}$ exist?
\end{problem}

Before proving \refT{Tex}, consider first for simplicity  
the case when $S$ is finite. 
Then, a distribution (\ie, probability measure) 
$\mu$ on $S^n$ is the same as a positive element of
norm 1 in $\ell_1(S^n)$.
Since $S$ is finite, $\ell_1(S^n)=\ell_1(S)\nnpi$, isometrically.
Thus, 
a distribution
$\mu$ on $S^n$ is the same as a positive element of
norm 1 in $\ell_1(S)\nnpi$.
Furthermore, by definition, $\mu$ is exchangeable if it is invariant under 
permutations of the coordinates, which is the same as saying that $\mu$,
regarded as a tensor in $\ell_1(S)\nn$, is a symmetric tensor.
Hence, an exchangeable distribution $\mu$ is a positive element of
$\ell_1(S)\snn$ with $\normpi{\mu}=1$.

Consider now representations as in \eqref{deFinetti+-} of an exchangeable 
distribution $\muXX$.
If $\gl$ has finite support, then \eqref{deFinetti+-} becomes a
representation as in \eqref{normpisp}, and thus 
$\normpispq{\muXX}{\ell_1(s)}\le\norm\gl$.
Furthermore, this extends to arbitrary measures $\gl$ since \eqref{deFinetti+-}
implies 
\begin{align}\label{hedemora}
\normpisp{\muXX}\le\int \normpisp{\nu^n}\dd|\gl|(\nu)=\norm{\gl}.
\end{align}
(The spaces are finite-dimensional and there are no problems with
measurablilty or convergence.)
Conversely, a representation as in \eqref{normpisp} yields a representation
\eqref{deFinetti+-} with $\gl=\sum_k a_k\norm{x_k}^n\gd_{x_k/\norm{x_k}}$
and thus $\norm{\gl}\le\sum_k |a_k|\norm{x_k}^n$.
Consequently,
when $S$ is finite,
\begin{align}\label{puh}
  \inf\bigset{\norm{\gl}: \text{\eqref{deFinetti+-} holds}}
=\normpispq{\muXX}{\ell_1(S)}.
\end{align}
Moreover, \refR{R=+}
implies that the infimum in \eqref{puh} is attained by some $\gl$; 
in fact, by some $\gl$ with finite support.

We have shown that if $S$ is finite, then \eqref{deFinetti+-}
holds with $\norm{\gl}=\normpisp{\muXX}$.
A special case
is to take $S=\setnn:=\setn$
and let the random vector $(X_1,\dots,X_n)$ be a uniformly random permutation
of $\setn$, which means that $\muXX:=e_1\vees e_n$.
This case is easily seen to be extreme. In fact, if $S$ is any finite set
and $\xx=(x_1,\dots,x_n)\in S^n$, then 
\begin{align}
  \label{gfxx}
\gfxx (e_i):=\gd_{x_i}
\end{align} 
defines a
linear operator 
$\gf_{\xx}:\ell_1^n\to\cM(S)=\ell_1(S)$ with $\norm{\gf_{\xx}}=1$, 
and thus, by \refT{Tfunctor+},
$ 
\normpisp{\gd_{x_1}\vees \gd_{x_n}} \le \normpisp{e_1\vees e_n}.
$
Furthermore, every exchangeable distribution $\muXX$ on $S^n$ is
a convex combination of tensors of the type $\gd_{x_1}\vees \gd_{x_n}$. 
Consequently,
\begin{align}
\normpisp{\muXX} \le \normpisp{e_1\vees e_n}.
\end{align}
This proves,
together with \eqref{puh},
the main assertion in \refT{Tex} when $S$ is finite.

The general proof uses the same idea; we only have to add some
technicalities, which we borrow from \cite{SJ308}, where further
details may be found if necessary; see also \cite{KS06}.

\begin{proof}[Proof of \refT{Tex}]
\pfitemref{Texa}
  Fix a representation
  \begin{align}\label{fixed}
    e_1\vees e_n 
=\sumkN a_k \ff_{k}\nn,
  \end{align}
where $a_k\in\bbR$ and $\ff_k\ge0$ are unit vectors in
$\ell_1^n=\cM(\setnn)$;
thus $\eta_k\in\cP(\setnn)$.

For any $\xx=(x_1,\dots,x_n)\in S^n$, define again the linear map
$\gfxx:\cM(\setnn)\to\cM(S)$ by \eqref{gfxx} and linearity, and note that
$\gfxx$ maps $\cP(\setnn)$ into $\cP(S)$. ($\gfxx$ is the natural
push-forward of measures induced by the mapping $\setnn\to S$ given by
$i\mapsto x_i$.)
Furthermore,
$\gfxx\nn:\cM(\setnn)\nn\to \cM(S)\nn$ 
and we may regard $\cM(S)\nn$ as a subspace of $\cM(S^n)$ also when $S$ is
infinite. 

Define further, using the decomposition \eqref{fixed},
\begin{align}\label{psixx}
  \psi_{\xx}:=\sumkN a_k \gd_{\gfxx(\ff_k)}\in\cM(\cP(S)).
\end{align}
Then, for any $\xx\in S^n$, using \eqref{fixed} and \eqref{gfxx},
\begin{align}
  \int_{\cP(S)} \nu^n \dd\psi_\xx(\nu)
&=\sumkN a_k\gfxx(\ff_k)\nn
=\sumkN a_k\gfxx\nn\bigpar{\ff_k\nn}
\notag\\&
=\gfxx\nn\bigpar{e_1\vees e_n}
=\gfxx(e_1)\vees\gfxx(e_n)
\notag\\&
=\gd_{x_1}\vees\gd_{x_n}.
\label{aslog}
\end{align}
Furthermore, 
for each fixed $\ff\in\cP(S)$, the map $\xx\mapsto\gfxx(\ff)$
is measurable $S^n\to \cP(S)$, and thus
the map $\xx\mapsto\psi_\xx$ is measurable $S^n\to\cM(\cP(S))$.
Hence, $\psi_{\XX}$ is a random measure in $\cM(\cP(S))$.
Moreover, by \eqref{psixx},
\begin{align}\label{kraka}
\norm{\psi_{\XX}}_{\cM(\cP(S))}
\le K:= \sumkN |a_k|.
\end{align}
Hence, we can define the expectation $\gl:=\E\psi_{\XX}\in \cM(\cP(S))$,
\cf{} \cite[Lemma 2.4]{Kallenberg:RM}.
Furthermore, \eqref{kraka} implies
$\norm{\gl}_{\cM(\cP(S))}\le K$, and \eqref{aslog} implies
\begin{align}\label{disa}
  \int_{\cP(S)} \nu^n \dd\gl(\nu)&
=\E  \int_{\cP(S)} \nu^n \dd\psi_{\XX}(\nu)
=\E\bigpar{ \gd_{X_1}\vees\gd_{X_n}}
=\muXX.
\end{align}
This shows the existence of a representation \eqref{deFinetti+-} with
$\norm{\gl}\le K$, given by \eqref{kraka}.

We may, by \refR{R=+}, choose the decomposition \eqref{fixed} such that
$K=\normpisp{e_1\vees e_n}=\kkn$, and thus \eqref{tex1} holds.

To see that $\kkn$ is best possible, it suffices to take $S=\setnn$ and
$\muXX=e_1\vees e_n$, as in the discussion before the proof. Then
\eqref{puh} shows that every representating measure $\gl$ satisfies
$\norm\gl\ge\kkn$.

Finally, \eqref{tex3} follows from \eqref{abc}, \refT{Tgamma} 
and \eqref{gabriel}.

\pfitemref{Texm}
By \eqref{puh} and the comment after it, we can find $\gl$ with
\begin{align}\label{ior}
\norm{\gl}\le\normpispq{\muXX} {\ell_1(S)}
\le  \csp\bigpar{n,\ell_1(S)}
=  \csp\bigpar{n,\ell_1^m}.
\end{align}

On the other hand, if $M$ is a constant such that there always exists a
$\gl$ with
$\norm\gl\le M$, then \eqref{puh} shows that 
$\normpisp{\mu}\le M$
for every positive
$\mu\in\ell_1(S)\snn$ with $\normpi\mu=1$.
This extends to all 
$\mu\in\ell_1(S)\snn$ with $\normpi\mu=1$, by decomposing them in their
positive and negative parts, and thus
$\csp(n,\ell_1^m)=\csp\xpar{n,\ell_1(S)}\le M$.

Finally, 
if $m\ge n$ then
$\csp(n,\ell_1^m)=\kk(n)$ by \refT{Tcsp}.
\end{proof}

\begin{remark}
   The proof in \cite{SJ308} of the representation \eqref{deFinetti+-} used
  the argument above, with a decomposition \eqref{fixed} where $\ff_k$
  ranged over the $\binom{2n-1}{n-1}$ probability measures $\nu$
in $\cP(\setnn)$ such that $n\mu$ is integer-valued; it was shown in
\cite{SJ308} by an algebraic argument that there always exists a unique such
decomposition. No attempt was made in \cite{SJ308} to evaluate the best
constant; in fact, a numerical calculation (using Maple) of
the constant $K=K_n$ in \eqref{kraka} for the decomposition in \cite{SJ308} yields \eg{}
$K_2=3$, $K_3=20$, $K_4=210$, $K_5=3024$. 
These values are thus upper bounds for $\kkn$; we see that for $n=2$, we
obtain the sharp constant $\kkx2=3$ (see \refE{Ekk2}),
but already for $n=3$, this $K_n$ is larger than the upper bound in
\eqref{tex3} ($\kkx3\le18$). In other words (not surprisingly), the
decomposition used in 
\cite{SJ308} is not optimal.
\end{remark}

\begin{remark}
  Note that the proof uses the $\gs$-field on $\cMPS$ defined in
  \refSS{SSnot2}, and not the (in general larger) 
Borel $\gs$-field on the Banach space $\cMPS$; 
in general, the mapping $\xx\to\psi_\xx$ is not measurable if $\cMPS$ is
given the latter $\gs$-field.
\end{remark}

\begin{remark}\label{R+-+-}
  We have considered representations \eqref{deFinetti+-} where $\gl$ is a
  signed measure but $\nu$ ranges over probability measures.
An alternative is to allow also $\nu$ to be a signed measure, \ie, to
consider representations
\begin{align}
  \label{deFinetti+-+-}
\muXX=\int_{B(\cM(S))} \nu^n\gl(\ddx \nu) 
\end{align}
where $B(\cM(S))$ denotes the unit ball in the Banach space $M(S)$ of signed
neasures on $S$.
The arguments above are easily modified to this case and show that there
always exists such a representation with
\begin{align}  \label{tex+-}
  \norm{\gl}_{\cM(B(\cM(S)))}
\le 
\normpisq{e_1\vees e_n}{\ell_1}
=\cs(n,\ell_1)=\frac{n^n}{n!},
\end{align}
where we used \refE{El1} for the explicit value; moreover, this constant is
the best possible.
In particular, this shows that if $n\ge2$, then we cannot in general find a
representation \eqref{deFinetti+-+-} where $\gl$ is a probability measure on
$B(\cM(S))$. 
\end{remark}

\begin{remark}\label{Rbetter}
  The upper bound in \eqref{tex3} can be improved a little as follows.

By \eqref{polt},
\begin{align}\label{uggla}
\kkn=
\normpisp{e_1\vees e_n}
\le
\frac{1}{2^nn!}
\sum_{\eps_1,\dots,\eps_n=\pm1} 
\Bignormpisp{\biggpar{\sumin \eps_ie_i}\nn}.
\end{align}
Consider one of the terms in the sum, and suppose that $\eps_i=1$ for $k$
indices $i$.
The argument in the beginning of the proof of \refT{Tcssp}, up to the first
inequality in \eqref{kroken}, with $u:=(k,-(n-k))\in\ell_1^2$, 
show that, using \eqref{psi}, 
\begin{align}
  \Bignormpispq{\biggpar{\sumin \eps_ie_i}\nn}{\ell_1^n}
\le \normpispq{u\nn}{\ell_1^2}=\psi\bigpar{k,-(n-k}).
\end{align}
This is evaluated in \eqref{466}, 
and thus \eqref{uggla} yields, by counting terms,
\begin{align}
\kkn&
\le
\frac{1}{2^nn!} \sumkon \binom{n}k \psi\bigpar{k,-(n-k)}
\notag\\&
=
\frac{1}{2^{n+1}n!} \sumkon \binom{n}k 
\Bigpar{\Bigpar{\sqrt{k}+\sqrt{n-k}}^{2n}+\Bigpar{\sqrt{k}-\sqrt{n-k}}^{2n}}
. \label{uv} 
\end{align}
For $n=2$, \eqref{uv} yields the correct value 3. 
We have no reason to
believe that the bound is sharp for larger $n$.

The improvement from the upper bound in \eqref{tex3} lies in that we here
use the exact value \eqref{466} for each term, while the proof of
\eqref{tex3} estimates each $\psi(k,-(n-k))$ by the worst case $k=n/2$.
However, the improvement is slight, since most terms in \eqref{uggla} have 
$k$ close to $n/2$. In fact, simple asymptotic estimates (which we omit)
show that asymptotically, \eqref{uv} improves the upper bound only by a
factor $\sqrt{2/3}\doteq0.816$. Numerically, the improvement factor is close
to this value also for small $n$, with a factor $0.75$ for $n=2$ and $3$.
\end{remark}

\subsection{Binary variables}\label{SSbinary}
If $S$ is finite with $|S|=m<n$,
we may hope that the bound $\csp(n,\ell_1^m)$ in \eqref{texm}
is better than $\kk(n)$.
We consider here only the simplest case $|S|=2$, for example $S=\set{0,1}$.

Let, for $0\le j\le n$, $\mu_j$ be the distribution of a random vector
$\XX\in S^n$ consisting of $j$ 0's and $n-j$ 1's in random order; 
thus, 
\begin{align}\label{mum}
\mu_j=\gL\bigpar{\gd_0\nnx{j}\tensor\gd_1\nnx{n-j}}.
\end{align}
Evidently,
$\mu_j$ is exchangeable. Moreover, every exchangeable distribution on $S^n$
is a mixture of these measures $\mu_j$, and it follows that
\begin{align}\label{snork}
\csp(n,\ell_1^2)=
\csp(n,\ell_1(S))=
\sup_{0\le j\le n}  \normpispq{\mu_j}{\ell_1(S)}.
\end{align}
We thus want to find $\normpispq{\mu_j}{\ell_1(S)}$.

We argue as in the proof of \refT{Tgamma}. This yields, \cf{}
\eqref{neutrino}--\eqref{ab}, that
\begin{align}\label{mumin}
  \normpispq{\mu_j}{\ell_1(S)}
=\inf\norm{\mu}=\inf\intoi|\ddx\mu|(x),
\end{align}
taking the infimum over all signed measures $\mu$ on $\oi$ such that
\begin{align}
  \intoi(x,1-x)\nn\dd\mu(x)=\mu_j,
\end{align}
which is equivalent to, by expanding into coordinates in $(\bbR^2)\nn$,
\begin{align}\label{apa}
  \intoi x^k(1-x)^{n-k}\dd\mu(x)=\frac{1}{\binom{n}{j}}\gd_{kj},
\qquad k=0,\dots,n.
\end{align}

Let again $T_n$ be the Chebyshev polynomial.
Let $x\in\oi$, write $y:=1-x$, $t:=\arccos(2x-1)$ and $s:=t/2$.
Then, 
$\cos^2s=(1+\cos t)/2=x$, $\sin^2s=1-\cos^2s=y$,
and thus
$e^{\ii s}=x\qq+\ii y\qq$. 
Consequently,
\begin{align}
  T_n(2x-1)&
=\cos(nt) 
=\cos(2ns) 
=\Re e^{\ii 2ns}
=\Re\bigpar{x\qq+\ii y\qq}^{2n}
\notag\\&
=\sum_{k=0}^{n/2} \binom{2n}{2k}x^k(-y)^{n-k}
\notag\\&
=\sum_{k=0}^{n/2} \binom{2n}{2k}(-1)^{n-k}x^k(1-x)^{n-k}.
\end{align}
Hence, if $\mu$ satisfies \eqref{apa}, then
\begin{align}
  \intoi T_n(2x-1)\dd\mu(x) = 
\frac{(-1)^{n-j}}{\binom{n}{j}} 
\binom{2n}{2j}
\end{align}
which implies, since $|T_n(2x-1)|\le1$ for $x\in\oi$,
\begin{align}\label{bnm}
  \intoi |\ddx\mu|(x) \ge
\frac{\binom{2n}{2j}}{\binom{n}{j}}.
\end{align}
Recalling \eqref{mumin}, we have shown that
\begin{align}\label{mumrik}
\normpispq{\mu_j}{\ell_1(S)}
\ge
 \frac{\binom{2n}{2j}}{\binom{n}{j}}.
\end{align}
Thus, by \eqref{snork},
using an elementary calculation to optimize $j$,
\begin{align}\label{mumrik1}
\csp\bigpar{n,\ell_1^2}
\ge
\max_{0\le j\le n}
 \frac{\binom{2n}{2j}}{\binom{n}{j}}
= \frac{\binom{2n}{2\floor{n/2}}}{\binom{n}{\floor{n/2}}}.
\end{align}

We conjecture that $T_n(2x-1)$ is extremal here too, so that equality holds
in \eqref{mumrik} and \eqref{mumrik1}, but we leave that as an open problem.

In any case, \eqref{mumrik1} is a lower bound. Stirling's formula yields
the asymptotic estimate
\begin{align}\label{mumrik3}
\csp\bigpar{n,\ell_1^2}
\ge 2^{n-1/2+o(1)}.
\end{align}
Hence, the constants $\csp\bigpar{n,\ell_1^2}$ also grow exponentially, but
possibly (presumably) at a slower rate than
$\kkn=\csp\bigpar{n,\ell_1}$, see \eqref{kang}.

However, a numerical calculation reveals that for $2\le n\le 4$, the lower
bound $n^n/n!$ in \eqref{tex3} is smaller than the bound in \eqref{mumrik1}.
We thus have, using also \refE{Ekk2} or \refR{Rbetter}
for $n=2$, the improved bounds
\begin{align}
  \kk(2)&=\csp\bigpar{2,\ell_1^2}=3,
\\
  \kk(3)&\ge\csp\bigpar{3,\ell_1^2}\ge5,
\\
  \kk(4)&\ge\csp\bigpar{4,\ell_1^2}\ge\frac{35}3.
\end{align}

\begin{problem}
Find a non-trivial upper bound for $\csp(n,\ell_1^2)$.
  Is, as conjectured above, \eqref{mumrik1} an equality?
\end{problem}

\begin{problem}
  Extend this to $\csp(n,\ell_1^m)$ for other fixed values of $m$.
\end{problem}

\subsection{Extendible finitely exchangeable variables}\label{SSextend}

Let $n$ and $N$ be positive integers with $N\ge n$.
An exchangeable random vector $\XX_n=(X_1,\dots,X_n)$ in $S^n$ is
\emph{$N$-extendible} if it can be extended to an exchangeable random vector
$\XX_N=(X_1,\dots,X_N)$. 
We similarly say that an exchangeable distribution on $S^n$ is
$N$-\ext{}
if it is the distribution of an $N$-\ext{} vector.
Note that by de Finetti's theorem \eqref{deFinetti}, 
at least if $S$ is a Borel space, a
distribution is $\infty$-\ext{} if and only if it has a representation
\eqref{deFinetti+-} with a probability measure $\gl$. 
However, we will here consider the case of finite $N$.
See \eg{} \cite{Diaconis,DiaconisF,Takis2} for various aspects of
extendibility.

Let $\cE_n=\cE_n(S)$ 
be the set of \exch{} distributions on $S^n$, and let $\cE_{n,N}=\cE_{n,N}(S)$
be the subset of \Next{} distributions.
Let $\PiNn:\cP(S^N)\to\cP(S^n)$ be the map induced by projecting a random
vector $(X_1,\dots,X_N)$ onto its first $n$ coordinates.
Thus $\cE_{n,N}=\PiNn(\cE_N)\subseteq\cE_n$.

Consider again first the case when $S$ is finite. 
Then, as discussed above, $\cE_n$ is the set of positive unit elements in
$\ell_1(S)\nn$. 

Consider the special case $S=[N]$, and define
\begin{align}
  \chi_{n,N}:=\PiNn\bigpar{e_1\vees e_N} \in \cE_{n,N}([N]).
\end{align}
This is thus the distribution of $(X_1,\dots,X_n)$ when $(X_1,\dots,X_N)$ is
a uniformly random permutation of $[N]$; in other words, $\chi_{n,N}$ is the
distribution of the random vector obtain by drawing $n$ elements of $S=[N]$
without replacement.
We will see that this is, not surprisingly, an extreme case, 
\cf{} \eg{} \cite{DiaconisF}.
Let
\begin{align}\label{kknN}
  \kknN:=\normpispq{\chi_{n,N}}{\ell_1^N}.
\end{align}

For an arbitrary $S$ and $\xx=(x_1,\dots,x_N)\in S^N$, define 
$\gfxx:\cM(\setNN)\to\cM(S)$ by \eqref{gfxx}
and linearity.
Then, 
$\gfxx\nn(\chi_{n,N})\in\cM(S)\nn\subseteq\cM(S^n)$
is the distribution of the random vector obtained by drawing $n$ elements
of $x_1,\dots,x_N$ without replacement,
see \eqref{kik} below.

\begin{theorem}\label{Texex}
  \begin{thmenumerate}
  \item \label{Texexa}
Let $1\le n\le N$.
If\/ $\XX=(X_1,\dots,X_n)$ is exchangeable and \Next, 
with values in an arbitrary measurable space $S$, then 
its distribution $\muXX\in\cP(S^n)$ has 
a representation \eqref{deFinetti+-} with a 
signed measure $\gl$ on $\cP(S)$
such that
\begin{align}\label{texex}
  \norm{\gl}_{\cM(\cP(S))}\le \kknN.
\end{align}
The constant $\kknN$ given in \eqref{kknN} is, in general, the best
possible. 
\item \label{Texexm}
If furthermore $S$ is finite with $|S|=m$, 
then \eqref{texex} can be replaced by
\begin{align}\label{texexm}
\norm{\gl}_{\cMPS}\le 
\kknNm:=
\max_{\xx\in S^N}\normpispq{\gfxx\nn(\chi_{n,N})}{\ell_1(S)}
\end{align}
Moreover, this constant is the best possible for the given $S$.
If\/ $m\ge N$, then $\kknNm=\kknN$.
  \end{thmenumerate}
\end{theorem}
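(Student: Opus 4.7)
My plan is to follow the scheme of \refT{Tex}, with $e_1\vees e_N$ replaced by its projection $\chi_{n,N}$ and with the extension $\XX_N$ providing an extra layer of averaging. The algebraic heart is the identity
\begin{equation*}
\mu_{\XX_n} = \E\bigsqpar{\gf_{\XX_N}\nn(\chi_{n,N})},
\end{equation*}
valid whenever $\XX_n$ extends to an exchangeable $\XX_N$ in $S^N$. I would verify this by conditioning on $\XX_N=\xx$: for fixed $\xx$, $\gfxx\nn(\chi_{n,N})$ is the law of $(x_{\gs(1)},\dots,x_{\gs(n)})$ for a uniform injection $\gs:\setnn\hookrightarrow\setNN$, and averaging over an exchangeable $\XX_N$ independent of $\gs$ reproduces the joint law of $(X_1,\dots,X_n)$.

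For \ref{Texexa}, I would fix an optimal representation $\chi_{n,N}=\sumkN a_k\,\ff_k\nn$ with $\ff_k\in\cP(\setNN)$ and $\sumkN|a_k|=\kknN$, which exists by \refR{R=+} in the finite-dimensional space $(\ell_1^N)\snnpisp$. Mirroring \eqref{psixx}, set
\begin{equation*}
\psi_\xx := \sumkN a_k\,\gd_{\gfxx(\ff_k)} \in\cMPS,
\end{equation*}
so that $\int_{\cP(S)}\nu^n\dd\psi_\xx(\nu)=\gfxx\nn(\chi_{n,N})$ by functoriality, and $\norm{\psi_\xx}\le\kknN$ uniformly in $\xx$. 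The measurability of $\xx\mapsto\psi_\xx$ for the $\gs$-field on $\cMPS$ from \refSS{SSnot2} follows exactly as in the proof of \refT{Tex}, so $\gl:=\E\psi_{\XX_N}$ is well defined with $\norm\gl\le\kknN$ and $\int\nu^n\dd\gl(\nu)=\mu_{\XX_n}$ by the identity above. For sharpness, take $S=\setNN$ and let $\XX_n$ be the first $n$ coordinates of a uniformly random permutation of $\setNN$, so that $\mu_{\XX_n}=\chi_{n,N}$; since $S$ is finite, \eqref{puh} forces $\norm\gl\ge\normpisp{\chi_{n,N}}=\kknN$ for any representing $\gl$.

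For \ref{Texexm} with $|S|=m<\infty$, every exchangeable probability measure on $S^N$ is a finite convex combination $\mu_{\XX_N}=\sum_\xx c_\xx\,\gfxx\nnx{N}(e_1\vees e_N)$ indexed by $\fS_N$-orbits in $S^N$. Applying $\PiNn$ term by term gives $\mu_{\XX_n}=\sum_\xx c_\xx\,\gfxx\nn(\chi_{n,N})$, and combining the optimal representations of each $\gfxx\nn(\chi_{n,N})$ provided by \eqref{puh} yields $\gl$ with $\norm\gl\le\sum_\xx c_\xx\,\normpispq{\gfxx\nn(\chi_{n,N})}{\ell_1(S)}\le\kknNm$. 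Optimality is attained by $\mu_{\XX_n}:=\gfxx\nn(\chi_{n,N})$ for the maximizing $\xx$, which is $N$-\ext{} via $\gfxx\nnx{N}(e_1\vees e_N)=\gd_{x_1}\vees\dotsm\vees\gd_{x_N}$. For $m\ge N$, \refT{Tfunctor+} applied to the positive norm-$1$ operator $\gfxx:\ell_1^N\to\ell_1(S)$ gives $\normpispq{\gfxx\nn(\chi_{n,N})}{\ell_1(S)}\le\kknN$, hence $\kknNm\le\kknN$; conversely, choosing $\xx$ with $N$ distinct entries turns $\gfxx$ into a positive isometric embedding admitting a positive norm-$1$ retraction ($\gd_s\mapsto e_i$ if $s=x_i$, and $\gd_s\mapsto 0$ otherwise), so $\gfxx\snn$ is isometric for $\normpisp$ and equality holds.

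The main obstacle is the measurability step in \ref{Texexa} for general $S$, but this is handled exactly as for \refT{Tex} using the $\gs$-field on $\cMPS$ specified in \refSS{SSnot2}; the algebraic core --- the push-forward identity and the orbit decomposition in the finite case --- is straightforward.
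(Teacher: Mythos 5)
Your proposal is correct and follows essentially the same route as the paper: fix an optimal decomposition of $\chi_{n,N}$ in $(\ell_1^N)\snnpisp$, push forward by $\gfxx$ to define $\psi_\xx$, use the identity $\int\nu^n\dd\psi_\xx(\nu)=\gfxx\nn(\chi_{n,N})=\PiNn(\gd_{x_1}\vees\gd_{x_N})$, set $\gl:=\E\psi_{\XX_N}$, and obtain $\muXX$ by exchangeability; sharpness follows by taking $\muXX=\chi_{n,N}$ on $S=\setNN$ and invoking \eqref{puh}, and part (ii) is the same orbit decomposition plus \eqref{puh} used in the paper. The one place you go beyond the paper's text is the final assertion $\kknNm=\kknN$ for $m\ge N$: the paper states this in the theorem but the proof does not spell out an argument, whereas you supply a clean one (positivity and norm $1$ of $\gfxx$ give $\le$ by \refT{Tfunctor+}; for $\xx$ with distinct entries the positive norm-$1$ retraction $\gd_s\mapsto e_i$ if $s=x_i$, else $0$, makes $\gfxx\snn$ isometric on $\normpisp\,$, giving $\ge$). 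That addition is sound and fills a small gap left implicit in the paper.
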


\begin{proof}
The proof of \refT{Tex} extends with minor changes as follows; we omit some
details. 
  
\pfitemref{Texexa}
 Fix a representation
  \begin{align}\label{fixedN}
    \chi_{n,N}
=\sumkM a_k \ff_{k}\nn,
  \end{align}
where $a_k\in\bbR$ and $\ff_k\ge0$ are unit vectors in
$\ell_1^N=\cM(\setNN)$
and $K:=\sum_k|a_k|=\kknN$ (see \refR{R=+}).
Thus,  $\ff_k\in\cP(\setNN)$.
Define again $\psi_{\xx}$ by \eqref{psixx}.
Then, similarly to \eqref{aslog},
\begin{align}\label{kik}
  \int_{\cP(S)}\nu^n\dd\psi_{\xx}(\nu)
&=\gfxx\snn(\chi_{n,N})
=\PiNn\bigpar{\gd_{x_1}\vees\gd_{x_N}}
\notag\\&
=\frac{1}{N!}\sum_{\gs\in\fS_N} \gd_{x_{\gs(1)}}\tensor\dotsm\tensor \gd_{x_{\gs(n)}}.
\end{align}
Again, $\psi_{\xx}$ is a bounded random measure in $\cMPS$, and we define
$\gl:=\E\psi_{\XX}\in\cMPS$.
Then $\norm{\gl}_{\cMPS}\le K=\kknN$, so \eqref{texex} holds, and similarly
to \eqref{disa}, using \eqref{kik} and exchangeability,
\begin{align}
  \int_{\cP(S)}\nu^n\dd\gl(\nu) &
=\E\Bigpar{
\frac{1}{N!}\sum_{\gs\in\fS_N} \gd_{X_{\gs(1)}}\tensors \gd_{X_{\gs(n)}}}
\notag\\&
=\E \bigpar{ \gd_{X_{1}}\tensors \gd_{X_{n}}}
=\muXX.
\end{align}

The case $\muXX=\chi_{n,N}$ shows that the constant $\kknN$ is best
possible, using \eqref{hedemora} as earlier.

\pfitemref{Texexm}
When $S$ is finite, every distribution in $\cE_{n,N}$ is a convex
combination 
of the
distributions $\gfxx\nn(\chi_{n,N})$, and thus \eqref{texexm} follows from
\eqref{puh}.

Conversely, each $\gfxx\nn(\chi_{n,N})\in\cE_{n,N}$, and thus \eqref{puh}
shows that \eqref{texexm} is best possible.
\end{proof}

\begin{remark}  \label{Rkkmono}
Since $(N+1)$-\ext{} implies $N$-\ext, it follows from \refT{Texex} that
\begin{gather}\label{kkmono}
  \kkn=\kk(n,n)\ge\kk(n,n+1)\ge\dots \ge1,
\\
  \kk(n,N;m)\ge\kk(n,n+1;m)\ge\dots \ge1.\label{kkmonom}
\end{gather}
One can also see \eqref{kkmono} directly from \eqref{kknN}, 
since $\chi_{n,N+1}$ is the average of
$\gfxx\nn(\chi_{n,N})$ over all sequences $\xx$ of $N$ distinct elements of
$[N+1]$.
\end{remark}

\citet{DiaconisF} showed (with  precise estimates)
that if $N$ is large, then a distribution
$\muXX\in\cE_{n,N}$ is close to a distribution as in \eqref{deFinetti},
in the sense of total variation.
This implies similar results in terms of the constants in \refT{Texex}.
In particular, for fixed $n$, the following theorem shows  
that $\kknN\to1$ as \Ntoo{}; more precisely, 
$\kknN=1+O(1/N)$ for fixed $n$, and this rate is exact. However, there is
a wide gap between the ``constants'' (depending on $n$) in the upper
and lower bounds given by the theorem. 

\begin{theorem}\label{Tkknn}
  \begin{thmenumerate}
  \item \label{Tkknn<}
If\/ $N>  n(n-1)/2$, then
\begin{align}\label{nua}
  \kknN\le 1+\frac{n(n-1)}{2N-n(n-1)}\bigpar{\kkn+1}.
\end{align}
\item \label{Tkknn>}
If\/ $N\ge n$, then
\begin{align}\label{nub}
  \kknN
\ge 
e^{ \frac{n-1}{2\ceil{N/n}}}
\ge 
1+\frac{n(n-1)}{2(N+n)}.
\end{align}
\item \label{Tkknnm<}
If\/ $N\ge n\ge m$, then
\begin{align}\label{nuam}
  \kk(n,N;m)
\le 1+\csp(n,\ell_1^m) \frac{2mn}{N}
\le 1+ \frac{2mn\kkn}{N}.
\end{align}
\item \label{Tkknnm>}
If\/ $N\ge n\ge m$, then
\begin{align}\label{nubm}
  \kk(n,N;m)
\ge 
e^{ \frac{m-1}{2\ceil{N/n}}}
\ge 
1+\frac{(m-1)n}{2(N+n)}.
\end{align}
  \end{thmenumerate}
\end{theorem}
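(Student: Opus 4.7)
Part (ii) will follow from part (iv) by monotonicity. For $m\le m'$, the positive isometric embedding $\ell_1^m\hookrightarrow\ell_1^{m'}$ induces (via \refT{Tfunctor+}, together with the fact that a positive decomposition in $\ell_1^{m'}$ of a tensor supported on $[m]$ can be projected back into $\ell_1^m$ without increasing norms) an isometry on the $\normpisp$-normed symmetric tensor powers, so $\kk(n,N;m)$ is non-decreasing in $m$; combined with \refT{Texex}\ref{Texexm} this gives $\kk(n,N)=\kk(n,N;N)\ge\kk(n,N;n)$, and (iv) with $m=n$ yields (ii). The elementary second inequalities in (ii) and (iv) follow from $e^x\ge 1+x$ applied with $x=(m-1)/(2\lceil N/n\rceil)$ together with $\lceil N/n\rceil\le(N+n)/n$.

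For the upper bounds (i) and (iii) I will use the standard ``with- vs.\ without-replacement'' decomposition. Let $\bar\eta_N:=N\qw\sumin\gd_i\in\cP([N])$ and $\lambda:=N\fall{n}/N^n$, the probability that $n$ i.i.d.\ $\bar\eta_N$-samples are all distinct. Conditioning gives
\[
\bar\eta_N\nn=\lambda\,\chi_{n,N}+(1-\lambda)\rho,
\]
where $\rho\in\cE_n([N])$ is the exchangeable conditional distribution given a collision. Solving for $\chi_{n,N}$, taking $\normpisp{\cdot}$, and using $\normpisp{\bar\eta_N\nn}=1$ together with $\normpisp{\rho}\le\kknN[n]$ (by \refT{Tex}), one obtains $\kk(n,N)\le\lambda\qw\bigpar{1+(1-\lambda)\kknN[n]}$; the elementary bound $\lambda\ge1-n(n-1)/(2N)$ for $N>n(n-1)/2$ (via $\prod_i(1-a_i)\ge1-\sum_ia_i$) yields (i). For (iii), I apply $\gfxx\nn$ to the same identity for each $\xx\in[m]^N$: the image $\gfxx\nn(\bar\eta_N\nn)=\bar\eta_\xx\nn$ is the tensor power of the empirical measure on $[m]$ (of unit norm), while $\gfxx\nn(\rho)\in\cE_n([m])$ has norm at most $\csp(n,\ell_1^m)$ by \refT{Tex}\ref{Texm}. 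This argument yields a bound of the form $\kk(n,N;m)\le 1+O(n(n-1)\csp(n,\ell_1^m)/N)$; obtaining the sharper $2mn$-coefficient is the principal obstacle and appears to require an $O(mn/N)$ total-variation bound between the multivariate hypergeometric and multinomial distributions (e.g., via a $\chi^2$-divergence estimate rather than a pointwise ratio bound).

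For the lower bound (iv) I construct an explicit witness. Set $q:=\lceil N/n\rceil$ and $N':=qn\ge N$; choose integers $l_j\in\{\floor{n/m},\ceil{n/m}\}$ with $\sum_jl_j=n$; and let $\xx\in[m]^{N'}$ have color multiplicities $N_j=ql_j$. Then $\muXX:=\gfxx\nn(\chi_{n,N'})$ is $N'$-\ext, hence $N$-\ext. I test $\muXX$ against the symmetric $n$-linear form $L$ on $(\ell_1^m)^n$ characterized by $\hat L(x)=\prod_jx_j^{l_j}$: by AM--GM on the probability simplex, $\normqp{L}=\prod_j(l_j/n)^{l_j}$; and the polarization formula $L(x_1,\dots,x_n)=(n!)\qw\partial_{t_1}\dotsm\partial_{t_n}\hat L(\sum_it_ix_i)|_{t=0}$ gives $L(e_{s_1},\dots,e_{s_n})=\prod_jl_j!/n!$ on tuples of type $(l_1,\dots,l_m)$ and $0$ otherwise. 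Hence $\langle L,\muXX\rangle=\prod_j(ql_j)\fall{l_j}/(qn)\fall{n}$ and
\[
\normpispq{\muXX}{\ell_1^m}\ge\frac{\langle L,\muXX\rangle}{\normqp{L}}=\frac{\prod_j\prod_{i=0}^{l_j-1}(qn-in/l_j)}{\prod_{k=0}^{n-1}(qn-k)}.
\]
To show this ratio is at least $e^{(m-1)/(2q)}$, I take logarithms and pair the multisets $A:=\{in/l_j:j,i\}$ and $B:=\{0,1,\dots,n-1\}$ in sorted order. The crucial combinatorial claim is that $a_i\le b_i$ for all $i$, which reduces to $\sum_j\floor{kl_j/n}\ge k-m+1$ for each $k\in\{0,\dots,n-1\}$; this follows from $\sum_jkl_j/n=k$ combined with the observation that $\sum_j\{kl_j/n\}$ is a non-negative integer strictly less than $m$ (hence at most $m-1$). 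Once pointwise domination is established, the inequality $-\log(1-y)\ge y$ applied to each pair gives $\log(\mathrm{ratio})\ge\sum_i(b_i-a_i)/(qn)=n(m-1)/(2qn)=(m-1)/(2q)$, as required; the delicacy here lies entirely in the sorted pairing, which the non-negative-integer argument handles cleanly.
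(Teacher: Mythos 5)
Your proposals for parts (i), (ii) and (iv) are correct; part (iii) is not established.

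\emph{Parts (i) and (iv).} Your argument for (i) is the same as the paper's (with--without-replacement decomposition of $\nu_N^n$). For (iv) you use the same witness $\gfxx\nn(\chi_{n,N'})$ and the same AM--GM upper bound for the test functional; where the paper isolates the pivotal combinatorial estimate as Lemma~\ref{LL} and proves it by showing $\nu_1[0,x)\ge\nu_2[0,x)$ and integrating the decreasing function $\log(1-tx)+tx$, you prove the equivalent statement by sorting the two multisets, establishing pointwise domination $a_i\le b_i$ (which you correctly reduce to $\sum_j\lfloor kl_j/n\rfloor\ge k-m+1$), and estimating each pair by $-\log(1-y)\ge y$. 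These are two presentations of the same fact; yours is slightly more elementary, the paper's is slicker. One minor remark: your restriction $l_j\in\{\lfloor n/m\rfloor,\lceil n/m\rceil\}$ is immaterial; the argument works for any positive integers $l_j$ summing to $n$, as in the paper.

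\emph{Part (ii).} Deducing (ii) as the $m=n$ case of (iv) via the monotonicity $\kk(n,N;m)\le\kk(n,N;m')$ for $m\le m'$ (using \refT{Tfunctor+} and a positive norm-one projection $\ell_1^{m'}\to\ell_1^m$, plus $\kk(n,N;N)=\kknN$ from \refT{Texex}) is correct and is a genuine, if small, simplification the paper does not make. The paper instead proves (ii) directly by modifying \refE{El1} with a partition of $[N]$ into $n$ blocks; your route subsumes it.

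\emph{Part (iii) --- the gap.} Your push-forward of the with/without-replacement decomposition gives only $\kk(n,N;m)\le 1+O\bigl(n(n-1)\csp(n,\ell_1^m)/N\bigr)$, because the ``collision'' mass $1-\lambda$ is $\Theta(n^2/N)$ no matter how few colours are used. The stated bound with the coefficient $2mn$ is genuinely stronger when $n\gg m$. You correctly identify that what is missing is a total-variation bound of order $mn/N$ between the hypergeometric-type exchangeable measure and a mixture of product measures; this is precisely the Diaconis--Freedman theorem, which the paper invokes directly: \cite[Theorem~(3)]{DiaconisF} produces a probability measure $\gl$ with $\mu_0:=\int_{\cP(S)}\nu^n\dd\gl(\nu)$ satisfying $\norm{\muXX-\mu_0}\le 2mn/N$, after which
\begin{equation}
\normpispq{\muXX}{\ell_1(S)}\le\normpisp{\mu_0}+\normpisp{\muXX-\mu_0}\le 1+\csp(n,\ell_1^m)\,\norm{\muXX-\mu_0}
\end{equation}
gives \eqref{nuam}. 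So for (iii) you need to abandon the single decomposition of $\bar\eta_N\nn$ and instead cite or reprove the Diaconis--Freedman estimate; without it your argument does not yield the stated inequality.
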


\begin{proof}
\pfitemref{Tkknn<}
  Let $\nu_N$ be the uniform distribution on $\setNN$. Then $\nu_N^n$
is the  distribution of a random vector $X_1,\dots,X_n$ obtained by drawing 
randomly from  $\setNN$ with replacement. Conditioned on the event $\cD$ that
$X_1,\dots,X_N$ are distinct, this yields the distribution $\chi_{n,N}$.
Hence, if $q:=\P(\cD)$, then, \cf{} \cite{DiaconisF},
\begin{equation}
  \nu_N^n=q \chi_{n,N}+(1-q)\mu',  
\end{equation}
for some probability measure $\mu'\in\cP(\setNN^n)$.
Clearly, $\mu'$ is symmetric, \ie, exchangeable.
Consequently,
\begin{align}
  q\normpisp{\chi_{n,N}}&
\le
  q\normpisp{\nu_N^n}
+(1-q)\normpisp{\mu'}
\notag\\&
\le 1+(1-q)\csp(n,\ell_1^N)\normpi{\mu'}
=
 1+(1-q)\kkn
\end{align}
and thus
\begin{align}
\kknN= \normpisp{\chi_{n,N}}&
\le
 1+\bigpar{\kkn+1}\frac{1-q}{q}.
\end{align}
Furthermore,
\begin{align}\label{q}
  q=\P(\cD)=\prod_{i=1}^{n-1}\Bigpar{1-\frac{i}{N}}
\ge1- \sum_{i=1}^{n-1}\frac{i}{N}
=1-\frac{n(n-1)}{2N}.
\end{align}
Hence, \eqref{nua} follows

\pfitemref{Tkknn>}
We modify \refE{El1}.
Partition $\setNN$ into $n$ sets $S_1,\dots,S_n$
and let $N_i:=|S_i|$. 
Define a multilinear operator $L:(\ell_1^N)^n\to\bbR$
by, writing $x_i=(x_{ij})_{j=1}^N$,
\begin{align}
  L(x_1,\dots,x_n)
=\prod_{i=1}^n \sum_{j\in S_i} x_{ij}.
\end{align}
Regarding $L$ as a 
linear operator $L:(\ell_1^N)\nn\to\bbR$, we then have, 
if $X_1,\dots,X_n$ is a random vector with distribution $\chi_{n,N}$,
\begin{align}
  L(\chi_{n,N})
&=\E L(\gd_{X_1},\dots,\gd_{X_n})
= \E \prod_{i=1}^n \indic{X_i\in S_i}
=\frac{N_1}{N}\cdot\frac{N_2}{N-1}\dotsm\frac{N_n}{N-n+1}.
\end{align}
Furthermore, for any $x=(x_j)_1^N\in\ell_1^N$ with $\norm{x}\le1$, if
$s_i:=\sum_{j\in   S_i}|x_{j}|$, then by the arithmetic-geometric inequality,
\begin{align}
\bigabs{  L\bigpar{x\nn}}
\le s_1\dotsm s_n \le \Bigparfrac{\sum_i s_i}{n}^n \le n^{-n}.
\end{align}
Consequently, by \eqref{pris}, $\normpispx{L}\le n^{-n}$, and thus,
recalling \eqref{q},
\begin{align}\label{nuc}
  \kknN=\normpisp{\chi_{n,N}}
\ge n^n   L(\chi_{n,N})
=\prodin \frac{nN_i}{N-i+1}
=\frac{1}{q}\prodin \frac{nN_i}{N}.
\end{align}

Suppose first that $N$ is a multiple of $n$; $N=\ell n$ for an integer
$\ell$. Then we may choose $N_i=N/n=\ell$ for each $i$,
and thus \eqref{nuc} yields, 
\begin{align}\label{nue}
\log\kknN
\ge
  -\log q 
=-\sum_{i=1}^{n-1}\log\Bigpar{1-\frac{i}{N}}
\ge \sum_{i=1}^{n-1}\frac{i}{N}
=\frac{n(n-1)}{2N}.
\end{align}

For a general $N\ge n$ we let $\ell:=\ceil{N/n}$ and $N_1:=\ell n$.
Then $N\le N_1<N+n$, and \eqref{nue} yields, using \eqref{kkmono},
\begin{align}
  \log\kknN\ge \log\kk(n,N_1) \ge \frac{n(n-1)}{2N_1}
=\frac{n-1}{2\ceil{N/n}}
\ge \frac{n(n-1)}{2(N+n)}
\end{align}
and \eqref{nub} follows.

\pfitemref{Tkknnm<}
Suppose that $\muXX\in\cE_{n,N}$. Then, by \citet[Theorem (3)]{DiaconisF},
there exists a probability measure $\gl$ such that if
$\mu_0:=\int_{\cP(S)}\nu^n\dd\gl(\nu)$, then $\norm{\muXX-\mu_0}\le 2mn/N$.
Consequently, 
\begin{align}
  \normpispq{\muXX}{\ell_1(S)}&
\le\normpisp{\mu_0}+\normpisp{\muXX-\mu_0}
\le 1+\csp\xpar{n,\ell_1(S)} \normpi{\muXX-\mu_0}
\notag\\&
\le 1+\csp(n,\ell_1^m) \frac{2mn}{N}.
\end{align}
The result follows by \eqref{puh} and \refT{Tcsp}.

 \pfitemref{Tkknnm>}
We modify \refE{El1} again. We may assume $S=[m]$.
Let $n_1,\dots,n_m$ and $N_1,\dots,N_m$ be positive integers with 
$\sum_1^m n_k=n$ and $\sum_1^m N_k=n$.
Partition $\setNN$ and $\setnn$ into sets $S_k$ and $T_k$, respectively,
with $|S_k|=N_k$ and $|T_k|=n_k$.
Define a multilinear operator $L:(\ell_1^m)^n\to\bbR$
by, writing $x_i=(x_{ij})_{j=1}^m$,
\begin{align}
  L(x_1,\dots,x_n)
=\prod_{k=1}^m \prod_{i\in T_k} x_{ik}.
\end{align}
If $x=(x_j)_1^m\in\ell_1^m$ with $\norm{x}\le1$, then
by the arithmetic-geometric inequality,
\begin{align}
\bigabs{  L\bigpar{x\nn}}
=\prod_{k=1}^m|x_k|^{n_k}
=\prod_{k=1}^mn_k^{n_k}\prod_{k=1}^m\Bigpar{\frac{|x_k|}{n_k}}^{n_k}
\le \prod_{k=1}^mn_k^{n_k}\Bigpar{\frac{\norm{x}}{n}}^{n}.
\end{align}
Consequently, by \eqref{pris}, 
\begin{align}\label{dixi}
\normpispx{L}\le n^{-n}  \prod_{k=1}^mn_k^{n_k}.
\end{align}
Let $\xx=(x_1,\dots,x_N)\in[m]^N$ with $x_i=k$ when $i\in S_k$,
and let $(X_1,\dots,X_n)$ be a random vector 
obtained by drawing without replacement from $x_1,\dots,x_N$.
Then $(X_1,\dots,X_n)$ has
distribution
$\gfxx\nn\chi_{n,N}$, and thus, with the notation $(N)_n:=N(N-1)\dotsm(N-n+1)$,
\begin{align}\label{dixis}
  L\bigpar{\gfxx\nn\chi_{n,N}}
&=\E L(\gd_{X_1},\dots,\gd_{X_n})
= \E \prod_{k=1}^m \prod_{i\in T_k} \indic{X_i=k}
\notag\\&
=\frac{(N_1)_{n_1}\dotsm(N_m)_{n_m}}{(N)_n}
.\end{align}
Consequently, by \eqref{texexm}, \eqref{dixi} and \eqref{dixis},
\begin{align}\label{dixit}
\kk(n,N;m)&
\ge
\normpisp{\gfxx\nn\chi_{n,N}}
\ge \frac{L(\gfxx\nn\chi_{n,N})}{\normpispx{L}}
\ge
\frac{n^n\prod_{k=1}^m(N_k)_{n_k}}{(N)_n\prod_{k=1}^mn_k^{n_k}}
\notag\\&
=\frac{\prodkm\prod_{j=1}^{n_k-1}\lrpar{1-j/N_k}}
  {\prod_{j=1}^{n-1}\bigpar{1-\xfrac{j}{N}}} 
\prodkm\Bigparfrac{nN_k}{N n_k}^{n_k}
\end{align}

Suppose first again that $N=\ell n$ is a multiple of $n$. 
Then, given any $n_1,\dots,n_k$ with sum $n$, 
we may choose $N_k=\ell n_k$ for each $k$.
Then the final product in \eqref{dixit} is 1, and \eqref{dixit} yields, 
using \refL{LL} below with $t=1/\ell$,
\begin{align}\label{diximus}
\log\kk(n,N;m)
\ge
\frac{m-1}{2\ell}
=\frac{(m-1)n}{2N}.
\end{align}

For a general $N\ge n$ we let $\ell:=\ceil{N/n}$ and $N_1:=\ell n$.
Then $N\le N_1<N+n$, and \eqref{diximus} yields
\begin{align}
  \log\kk(n,N;m)\ge \log\kk(n,N_1;m) 
\ge 
\frac{m-1}{2\ceil{N/n}}
\ge \frac{(m-1)n}{2(N+n)}.
\end{align}
and \eqref{nubm} follows.
\end{proof}

\begin{lemma}\label{LL}
Let $n\ge m\ge1$ and let $n_1,\dots,n_m$ be positive integers with $\sum_1^m
n_k=n$. 
Then, for every $t\in\oi$,
\begin{align}
  \sumkm \sum_{i=0}^{n_k-1}\log\Bigpar{1-t\frac{i}{n_k}}
- \sum_{i=0}^{n-1}\log\Bigpar{1-t\frac{i}{n}}
\ge \frac{m-1}{2}t.
\end{align}
\end{lemma}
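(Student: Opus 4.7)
The plan is to reformulate the inequality so that the linear term $(m-1)t/2$ is absorbed into the left-hand side, and then to deduce everything from a single concavity step. Introduce
\[
  \phi(u) := \log(1-tu) + tu, \qquad u \in \oi,
\]
and set $G_N := \sum_{i=0}^{N-1} \phi(i/N)$. For $t \in \oi$ one checks directly that $\phi(0) = 0$, $\phi'(u) = -t^2 u/(1-tu) \le 0$, and $\phi''(u) = -t^2/(1-tu)^2 \le 0$, so $\phi$ is concave and $\phi \le 0$ on $[0,1)$. Using $\sum_{i=0}^{N-1} i/N = (N-1)/2$, the asserted inequality is equivalent to the superadditivity
\[
  \sum_{k=1}^m G_{n_k} \ge G_n.
\]

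I plan to prove the stronger statement that the average $H_N := G_N/N$ is non-increasing in $N$. Superadditivity then follows at once: since $n_k \le n$ one has $H_{n_k} \ge H_n$, hence $G_{n_k} = n_k H_{n_k} \ge n_k H_n$, and summing over $k$ gives $\sum_k G_{n_k} \ge n H_n = G_n$.

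The monotonicity of $H_N$ rests on a single application of concavity. For each $i = 0,1,\dots,N-1$, the identity
\[
  \frac{i}{N} = \frac{N-i}{N}\cdot\frac{i}{N+1} + \frac{i}{N}\cdot\frac{i+1}{N+1},
\]
with non-negative coefficients summing to $1$, exhibits $i/N$ as a convex combination of $i/(N+1)$ and $(i+1)/(N+1)$; concavity of $\phi$ then gives
\[
  \phi(i/N) \ge \frac{N-i}{N}\phi\bigl(i/(N+1)\bigr) + \frac{i}{N}\phi\bigl((i+1)/(N+1)\bigr).
\]
Summing over $i$ and re-indexing the second sum by $j = i+1$, the coefficient of $\phi(j/(N+1))$ collapses to $(N-1)/N$ for every $j=1,\dots,N$, while the $j=0$ contribution vanishes because $\phi(0) = 0$. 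This yields
\[
  G_N \ge \frac{N-1}{N}\, G_{N+1}.
\]
Since $G_{N+1} \le 0$ and $(N-1)/N \le N/(N+1)$, multiplying the non-positive quantity $G_{N+1}$ by the smaller factor can only make it larger, so $G_N \ge \frac{N}{N+1}\, G_{N+1}$, which is exactly $H_N \ge H_{N+1}$.

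The main obstacle is choosing the right reformulation. A naive Taylor expansion of $\log(1-tu)$ reduces the lemma to superadditivity of the rescaled power sums $S_k(N)/N^k$ for each $k \ge 2$, i.e., monotonicity of the left-endpoint Riemann sums of $u^k$; this is true but seems to need delicate, asymptotically tight estimates on $S_k(N)$. Absorbing the linear $(m-1)t/2$ into the non-positive concave function $\phi$ sidesteps this difficulty, reducing the entire argument to one convex-combination identity plus the sign of $G_{N+1}$.
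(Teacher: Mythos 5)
Your proof is correct, and it takes a genuinely different route from the paper's. You and the paper both reduce to the same auxiliary function $\phi(u)=\log(1-tu)+tu$, but from there the arguments diverge. The paper introduces the atomic measures $\nu_1:=\sum_{k=1}^m\sum_{i=0}^{n_k-1}\delta_{i/n_k}$ and $\nu_2:=\sum_{i=0}^{n-1}\delta_{i/n}$, shows that $\nu_1[0,x)\ge\nu_2[0,x)$ for all $x$ (via $\nu_1[0,x)=\sum_k\lceil n_kx\rceil\ge nx$ and integrality), and then invokes a standard integration-by-parts argument to conclude $\int f\,d\nu_1\ge\int f\,d\nu_2$ for any \emph{decreasing} $f$. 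That proof needs only the monotonicity of $\phi$, not its concavity, and yields the inequality in one shot for all $m$ parts at once. Your proof instead isolates the two-part statement that $H_N=G_N/N$ is non-increasing, deducing the general superadditivity $\sum_k G_{n_k}\ge G_n$ by a trivial averaging step; the key estimate $G_N\ge\frac{N-1}{N}G_{N+1}$ uses the \emph{concavity} of $\phi$ (through the convex-combination identity for $i/N$) plus $\phi\le0$ to upgrade the factor to $N/(N+1)$. So the hypotheses you exploit (concave, $\phi(0)=0$, $\phi\le0$) overlap with but differ from the paper's (decreasing), and the combinatorial mechanism is different: stochastic domination of discrete measures versus a one-step recursion between consecutive $N$. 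Your reduction also makes explicit the clean structural fact that $G_N/N$ is monotone, which the paper's argument does not surface. Both proofs are short and self-contained; the paper's adapts more readily if one replaces $\phi$ by a non-concave decreasing function, while yours is arguably more elementary in that it avoids any appeal to integration by parts or measure-theoretic language.

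One small point of care, which you handle implicitly: every argument $i/N$ and $i/(N+1)$ that appears lies in $[0,1)$, never at $1$, so $\phi$ is finite throughout even when $t=1$.
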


\begin{proof}
  Define two positive measures  on $\oio$ by
  \begin{align}
    \nu_1&:=\sumkm\sum_{i=0}^{n_k-1}\gd_{\xfrac{i}{n_k}},
&
    \nu_2&:=\sum_{i=0}^{n-1}\gd_{\xfrac{i}{n}}.
  \end{align}
Both $\nu_1$ and $\nu_2$ are integer-valued and have total mass $n$.
Furthermore, for any $x\in\oi$, the number of integers $i\ge0$ such that
$i/n_k<x$ equals $\ceil{n_kx}$. Hence,
\begin{align}
  \nu_1[0,x) &= \sumkm\ceil{n_kx}
\ge\sumkm n_kx =nx,
\\
\nu_2[0,x)&=\ceil{nx}.
\end{align}
Since $\nu_1$ is integer-valued, it follows that
\begin{align}
  \nu_1[0,x)\ge \ceil{nx}=\nu_2[0,x),
\qquad x\in\oi.  
\end{align}
This implies, by a standard argument using integration by parts,
that if $f(x)$ is any decreasing function on $\oio$, then
\begin{align}\label{credo}
  \intoi f(x)\dd\nu_1(x)
\ge
  \intoi f(x)\dd\nu_2(x).
\end{align}
Choose $f(x):=\log(1-tx)+tx$. Then \eqref{credo} implies
\begin{align}
&  \sumkm \sum_{i=0}^{n_k-1}\log\Bigpar{1-t\frac{i}{n_k}}
- \sum_{i=0}^{n-1}\log\Bigpar{1-t\frac{i}{n}}
\notag\\&\qquad
=\intoi\log(1-tx)\bigpar{\ddx\nu_1(x)-\ddx\nu_2(x)}
\notag\\&\qquad
\ge
-\intoi tx\bigpar{\ddx\nu_1(x)-\ddx\nu_2(x)}
=
 - \sumkm \sum_{i=0}^{n_k-1}{t\frac{i}{n_k}}
+ \sum_{i=0}^{n-1}{t\frac{i}{n}}
\notag\\&\qquad
=
 -t \sumkm {\frac{n_k-1}2}
+ t\frac{n-1}2
=t\frac{m-1}2
 .
\end{align}
\end{proof}

\begin{remark}\label{Rnub}
The proofs of the lower bounds in \eqref{nub} and \eqref{nubm} 
really yields lower bounds for $\normpis\,$ 
and not just the larger $\normpisp\,$ in
\eqref{nuc} and \eqref{dixit}.
Hence, the lower bounds cannot be expected to be close to the true values.
\end{remark}

\section{Further examples}\label{Sex}

\begin{example}\label{El2+}
  Let $E=\ell_2^2$, \ie, $\bbR^2$ with the usual Euclidean norm.
If $A\in E\snnx2$, so $A$ is a symmetric $2\times2$ matrix, then, by
\refEs{Ematrix} and \ref{EBanach},
\begin{align}\label{bc}
  \normpi{A}=\normpis{A}=|\gl_1|+|\gl_2|,
\end{align}
where $\gl_1,\gl_2$ are the eigenvalues of $A$.

In particular, taking $A:=\smatrixx{0&1\\1&0}$,
\begin{align}\label{bd}
  \bignormpi{\smatrixx{0&1\\1&0}}
=
  \bignormpis{\smatrixx{0&1\\1&0}}
=2.
\end{align}

Furthermore, $A=e_1\tensor e_2+e_2\tensor e_1=2e_1\vee e_2$, and thus
\eqref{normpip} (or \eqref{normpip2}) yields, together with \eqref{lpip},
\begin{align}\label{be}
 \bignormpip{\smatrixx{0&1\\1&0}}=2.  
\end{align}

A positive unit vector in $\ell_2^2$ is $(\cos t,\sin t)$ for some
$t\in\opii$. 
Hence, a representation of $A$ as in \eqref{normpisp} can be written
\begin{align}\label{ba}
  A = \int_0^{\pi/2}\xpar{\cos t,\sin t}\nnx2\dd\mu(t)
\end{align}
for a signed measure $\mu$ on $\opii$
with finite support.
Thus, $\normpisp{A}$ is the infimum of $\norm\mu$ over all such $\mu$
satisfying \eqref{ba}.

With $A=\smatrixx{0&1\\1&0}$ as above, \eqref{ba} says
\begin{align}
\int_0^{\pi/2}\cos^2 t\dd\mu(t)  
= \int_0^{\pi/2}\sin^2 t\dd\mu(t)  
=0,
&&&
\int_0^{\pi/2}\cos t\sin t\dd\mu(t)  =1,
\end{align}
and thus
\begin{align}
  \int_0^{\pi/2}\bigpar{1-2\sin2t}\dd\mu(t)  
=
\int_0^{\pi/2}\bigpar{\cos^2 t+\sin^2t-4\sin t\cos t}\dd\mu(t)  
=-4.
\label{bb}
\end{align}
Since $\bigabs{1-2\sin2t}\le1$ on $\opii$, \eqref{bb} implies
$\norm{\mu}\ge4$, which is attained by $\mu=2\gd_{\pi/4}-\gd_0-\gd_{\pi/2}$.
Hence,
\begin{align}\label{bh}
 \bignormpisp{\smatrixx{0&1\\1&0}}=4  
> \bignormpip{\smatrixx{0&1\\1&0}}=2. 
\end{align}

In particular, \eqref{bh} shows that the result by \citet{Banach}, see again
\refE{EBanach}, does not extend to the positive tensor norms.
\end{example}

\begin{example}\label{El2g}
  Consider as in the previous example $E=\ell_2^2$. The different norms in
  $E\snnx2$ can be described geometrically as follows.

We give a matrix $A\in E\snnx2$ the coordinates $(u,v,w)$ defined by
\begin{align}\label{bff}
  A = \frac12
  \begin{pmatrix}
u+w & v \\ v & u-w
  \end{pmatrix}.
\end{align}
%
In these coordinates, we have $(\cos t,\sin t)\nnx2=(1,\sin 2t, \cos 2t)$.

The unit ball of $E\snnpix2=E\snnpisx2$ (see \refE{EBanach} again)
is by 
\refR{R=s} thus the convex hull of 
\begin{align}\label{bfpi}
\bigset{\pm(1,\sin 2t, \cos 2t):t\in[0,2\pi]}
=
\bigset{\pm(1,\sin s, \cos s):s\in[0,2\pi]} .
\end{align}
This is the convex hull of the union of two
symmetric circles, and thus the unit ball is the cylinder
$\set{|u|\le 1,\, v^2+w^2\le1}$.
In other words, $\normpis{(u,v,w)}=\max\bigset{|u|,\sqrt{v^2+w^2}}$,
which also easily is seen from \eqref{bc}.

For $E\snnpispx2$
we are by \eqref{normpisp}
only allowed to use positive vectors $(\cos t,\sin t)$, \ie, $t\in\opii$.
Consequently, the unit ball of $E\snnpispx2$ is the convex hull of the union
of two symmetric half-circles:
\begin{align}\label{bfpisp}
  \bigset{\pm(1,\sin s, \cos s):s\in[0,\pi]} .
\end{align}

Finally, in our coordinates, 
$(\cos s,\sin s)\vee(\cos t,\sin t)=
\bigpar{\cos(s-t),\sin(s+t),\cos(s+t)}$.
When $s,t\in\opii$, we have $s+t\in[0,\pi]$ and $|s-t|\le
\min\set{s+t,\pi-s-t}$. It follows from \eqref{normpip} that
the unit ball of $E\snnpipx2$ is the convex hull of the union of 
two half-circles (the same as in \eqref{bfpisp}) 
and four elliptic arcs given by
\begin{align}\label{bfpip}
  \bigset{\pm(1,\sin s, \cos s):s\in[0,\pi]} \cup
  \bigset{\pm(|\cos s|,\sin s,\cos s):s\in[0,\pi]}.
\end{align}

Note that the three sets in \eqref{bfpi}, \eqref{bfpisp} and \eqref{bfpip} are
the sets of extreme points of the unit balls.

To help visualizing these three unit balls, we consider their orthogonal
projections onto the plane $Q:=\set{w=0}$, which are the same as their
intersections with $Q$ since they all are symmetric with respect to
reflection in this plane. It follows easily from
\eqref{bfpi}, \eqref{bfpisp} and \eqref{bfpip} that these projections all are
polygons, with corners (extreme points)
\begin{align}
B(E\snnpix2)&:
\set{(\pm1,\pm1,0)},
\\
B(E\snnpispx2)&:
  \set{\pm(1,1,0),\,\pm(1,0,0)},
\\
B(E\snnpipx2)&:
  \set{\pm(1,1,0),\,\pm(1,0,0),\,\pm(0,1,0)}.
\end{align}
Equivalently, recalling \eqref{bff} and taking $u=2a$, $v=2b$, for any
$a,b\in\bbR$,
\begin{align}
\bignormpi{\smatrixx{a&b\\b&a}}&
=\bignormpis{\smatrixx{a&b\\b&a}}
=\normpi{(u,v,0)}
=\max\set{|u|,|v|}
=2\max\set{|a|,|b|},
\label{abpi}\\
\bignormpisp{\smatrixx{a&b\\b&a}}&
=  \normpisp{(u,v,0)}
= \max\set{|u|,|u-2v|}
=2 \max\set{|a|,|a-2b|},
\label{abpisp}\\
\bignormpip{\smatrixx{a&b\\b&a}}&
=  \normpip{(u,v,0)}
= \max\set{|u|,|v|,|u-v|}
= 2\max\set{|a|,|b|,|a-b|}.
\label{abpip}
\end{align}
In particular, we find again \eqref{bd}, \eqref{be} and \eqref{bh}.

Conversely, \eqref{abpi}--\eqref{abpip} can be found by the analytic method
in \refE{El2+}.

We find also, as another specific example,
\begin{align}
&  \bignormpi{\smatrixx{\phantom-1&-1\\-1&\phantom-1}}
= \bignormpis{\smatrixx{\phantom-1&-1\\-1&\phantom-1}}
=2,
\label{byxpi}\\
&  \bignormpisp{\smatrixx{\phantom-1&-1\\-1&\phantom-1}}
=6,
\label{byxpisp}\\
 & \bignormpip{\smatrixx{\phantom-1&-1\\-1&\phantom-1}}
=4.
\label{byxpip}
\end{align}
We claim that
\begin{align}
\csp(2,\ell_2^2)= \cssp(2,\ell_2^2)=3.
\end{align}
In fact, the two polarization constants are equal by \eqref{abcH}.
They are at least 3 by \eqref{byxpi} and \eqref{byxpisp}.
Finally, to show that they are at most 3, it suffices 
by \eqref{bfpi} and \eqref{bfpisp}
to consider 
$\xx=(1,\sin s\cos s)$ with $s\in(\pi,2\pi)$.
Then, with $s':=s-\pi$,
\begin{align}
  \xx=-(1,\sin s',\cos s')+(1,0,1)+(1,0,-1),
\end{align}
which by \eqref{bfpisp} shows that $\normpisp{\xx}\le3$;
it then follows from \eqref{cspn} that $\csp(2,\ell_2^2)\le3$.

By a similar argument, using \eqref{bh} for the lower bound
and \eqref{bfpip} and \eqref{bfpisp} for the upper,
we obtain (omitting the details), recalling \eqref{cpspn},
\begin{align}
\cpsp(2,\ell_2^2)=2.
\end{align}

We can also see that, as shown in \eqref{cpip}, the norm of the identity
$E\snnpix2 \to E\snnpipx2$ is $\cp(\ell_2^2)^2=2$, \cf{} \refE{Ecp}. 
\end{example}

\ack{I thank Takis Konstantopoulos for interesting discussions.}

\appendix

\section{Linear polarization constants}\label{Alinear}

We review in this appendix for comparison some results on another
``polarization constant'' that also has been studied. As far as we know,
there are no direct relations with the constants above. We nevertheless find
it interesting to compare the results and see similarities and differences.

Let $f_1,\dots,f_n\in E^*$, the dual of $E$.
Then
\begin{equation}\label{Ltensor}
  L(x_1,\dots,x_n):=\prodin f_i(x_i)
\end{equation}
defines an $n$-linear form on $E$, denoted by $f_1\tensor\dotsm\tensor f_n$.
In this case, the corresponding polynomial
$\hL$ is simply 
\begin{equation}\label{hLtensor}
  \hL(x)=\prodin f_i(x),
\end{equation}
\ie, $\hL=\prodin f_i$.
We have, as immediate consequences of \eqref{Ltensor} and \eqref{hLtensor},
\begin{align}
  \norm{L}&=\prodin\norm{f_i},
\\
\normq{L}&=\norm{\hL}=\Bignorm{\prodin f_i}.
\end{align}

Following \citet{BST},
we make the following definition in analogy with \refD{Dc}, 

\begin{definition}\label{Dcl}
  The \emph{linear polarization constant} $\cl(n,E)$ is defined by
  \begin{equation}\label{cl}
	\cl(n,E):=\sup_{L=f_1\tensors f_n}\frac{\norm{L}}{\norm{\hL}}
=\sup_{f_1,\dots,f_n\in E^*}\frac{\norm{f_1}\dotsm\norm{f_n}}
{\norm{\prodin f_i}}.
  \end{equation}
 \end{definition}
Equivalently,
  \begin{equation}\label{cl2}
	\cl(n,E)\qw
=\inf\Bigset{{\Bignorm{\prodin f_i}}:\norm{f_i}_{E^*}=1, i=1,\dots,n}.
  \end{equation}
In other words, $\cl(n,E)\qw$ is the largest number such that for any
$f_1,\dots,f_n\in E^*$,
\begin{equation}\label{clA7}
  \sup_{\norm{x}\le1}\prodin \bigabs{f_i(x)}
\ge \cl(n,E)\qw \prodin\norm{f_i}_{E^*}.
\end{equation}
As said above,
there are no direct relations with the constants above.  
Note that both \eqref{c} and \eqref{cl} are suprema of the same ratio of norms
$\norm L/\norm{\hL}=\norm{L}/\normq{L}$
over some sets of multilinear forms $L$
(the set is a linear space in \eqref{c} but not, in general, in \eqref{cl});
however, neither set includes the
other (for $n\ge2$). (The functions $L=f_1\tensors f_n$ in
\eqref{cl} typically are not  symmetric, and a general symmetric
$n$-linear form $L$ in \eqref{c} typically is not an elementary tensor
$f_1\tensors f_n$.)

Clearly, $\cl(n,E)\ge1$. It is proved in \cite{RyanT} that $\cl(n,E)<\infty$
for any $n$ and $E$. Moreover,
by \cite{BST} (complex case) and \cite{Korean} (real case, as a consequence
of \cite{Ball:plank}),
\begin{equation}\label{clnn}
  \cl(n,E)\le n^n
\end{equation}
for any normed space $E$. 

\begin{example}\label{Ecll1}
For any $m\ge n$,
$\cl(n;\ell_1^m)=\cl(n;\ell_1)=n^n$, by \eqref{clnn} and
the same example \eqref{el1} as in \refE{El1}, \ie, taking
$f_i$ as the $i$-th coordinate function and using \eqref{eli}. 
(See \cite{BST}.)
Hence, equality can hold  in \eqref{clnn}.
\end{example}

\begin{remark}\label{Rclfunc}
  It is easy to see that \refL{Lfunc} holds for $\cl$ too; hence, as noted
  in \cite{BST},
all parts of  \refT{Tfunc} holds for $\cl$ too.
\end{remark}

\begin{example}\label{EclH}
  For a complex Hilbert space $H$, as proved by \cite{Arias} and
  \cite{Ball:complex}, 
  \begin{equation}
	\cl(n,H)\le n^{n/2},	
  \end{equation}
with equality if $\dim(H)\ge n$. (The lower bound is obtained by taking
$f_1,\dots,f_n$ orthogonal.)
For real Hilbert spaces, 
the same is conjectured 
but so far proved only for $\dim(H)\le5$ \cite{PappasR}; 
for upper bounds see
\cite{Korean, 
Frenkel,
MunozEtal2010};
again the lower bound $\cl(n,H)\ge n^{n/2}$ holds if $\dim(H)\ge n$.

Furthermore, \cite{Korean} proved, using a dual version of Dvoretzky's
theorem and the result by \cite{BST} mentioned in \refR{Rclfunc},  
that $\cl(n,E)\ge \cl(n,\ell_2^n)$ for any infinite-dimensional
Banach space. (And thus for every infinite-dimensional normed space, since
$\cl(n,E)=\cl(n,\bar E)$ if $\bar E$ is the completion of $E$.)
Consequently, for any infinite-dimensional normed space $E$,
\begin{equation}\label{super}
  \cl(n,E)\ge n^{n/2}.
\end{equation}
\end{example}

For further examples, see \cite{Korean}.

\begin{remark}\label{Rsuper}
  It is easily seen from the definition
that $\cl(n,E)$ is supermultiplicative:
\begin{equation}
  \cl(m+n,E)\ge\cl(m,E)\cl(n,E),
\end{equation}
for any normed
  space $E$, see \cite{Korean}.
As a consequence, the limit
\begin{equation}
  \cl(E):=\lim_\ntoo \cl(n,E)^{1/n}
= \sup_n \cl(n,E)^{1/n}\in[1,\infty]
\end{equation}
exists, \cf{} \eqref{rc}.
For a calculation of $\cl(\bbR^d)$ and $\cl(\bbC^d)$, see \cite{PappasR}.
Further results are given in \eg{} \cite{CarandoEtal}.
\end{remark}

\begin{remark}
  It is seen from \eqref{super} that $\cs(n,E)$ grows faster than
  exponentially when $E$ is infinite-dimensional, so $\cl(E)$ may be
  infinite. 
In fact, see  \cite{Korean},
$\cl(E)<\infty\iff \dim(E)<\infty$.
\end{remark}

\begin{remark}
  \citet{BST} proved also more general results on 
$\sup \xfrac{\norm{q_1}\dotsm\norm{q_n}}{\norm{\prodin q_i}}$ where $q_i$
are polynomials of given degrees $m_1,\dots,m_n$, 
obtaining an extension of \eqref{clA7}
with a different constant depending on $m_1,\dots,m_n$ replacing $\cl(n,E)$.
See further \eg{} \cite{Pinasco}.
\end{remark}

\newcommand\AAP{\emph{Adv. Appl. Probab.} }
\newcommand\JAP{\emph{J. Appl. Probab.} }
\newcommand\JAMS{\emph{J. \AMS} }
\newcommand\MAMS{\emph{Memoirs \AMS} }
\newcommand\PAMS{\emph{Proc. \AMS} }
\newcommand\TAMS{\emph{Trans. \AMS} }
\newcommand\AnnMS{\emph{Ann. Math. Statist.} }
\newcommand\AnnPr{\emph{Ann. Probab.} }
\newcommand\CPC{\emph{Combin. Probab. Comput.} }
\newcommand\JMAA{\emph{J. Math. Anal. Appl.} }
\newcommand\RSA{\emph{Random Struct. Alg.} }
\newcommand\ZW{\emph{Z. Wahrsch. Verw. Gebiete} }
\newcommand\DMTCS{\jour{Discr. Math. Theor. Comput. Sci.} }

\newcommand\AMS{Amer. Math. Soc.}
\newcommand\Springer{Springer-Verlag}
\newcommand\Wiley{Wiley}

\newcommand\vol{\textbf}
\newcommand\jour{\emph}
\newcommand\book{\emph}
\newcommand\inbook{\emph}
\def\no#1#2,{\unskip#2, no. #1,} 
\newcommand\toappear{\unskip, to appear}

\newcommand\arxiv[1]{\texttt{arXiv:#1}}
\newcommand\arXiv{\arxiv}

\def\nobibitem#1\par{}

\end{document}